\newcolumntype{x}[1]{>{\centering\arraybackslash}p{#1}}
\theoremstyle{plain}
\newtheorem{thm}{\protect\theoremname}[section]
  \theoremstyle{plain}
  \newtheorem{cor}[thm]{\protect\corollaryname}
  \theoremstyle{plain}
  \newtheorem{conjecture}[thm]{\protect\conjecturename}
  \theoremstyle{plain}
  \newtheorem{question}[thm]{\protect\questionname}
  \theoremstyle{definition}
  \newtheorem{defn}[thm]{\protect\definitionname}
  \theoremstyle{plain}
  \newtheorem{prop}[thm]{\protect\propositionname}
  \theoremstyle{plain}
  \newtheorem{lem}[thm]{\protect\lemmaname}
  \theoremstyle{plain}
  \providecommand{\algorithmname}{Algorithm}
  \providecommand{\conjecturename}{Conjecture}
  \providecommand{\corollaryname}{Corollary}
  \providecommand{\definitionname}{Definition}
  \providecommand{\lemmaname}{Lemma}
  \providecommand{\propositionname}{Proposition}
  \providecommand{\questionname}{Question}
\providecommand{\theoremname}{Theorem}
\begin{document}

%\linenumbers % COMMENT OUT for no line number

\def\COMMENT#1{}
%\let\COMMENT=\footnote% COMMENT OUT for clean output

% Probabilistic stuff

\global\long\def\E{\mathbb{E}}

\global\long\def\P{\mathbb{P}}

\global\long\def\var{\textnormal{var}}

\global\long\def\cov{\textnormal{cov}}

% Number systems

\global\long\def\C{\mathbb{C}}

\global\long\def\N{\mathbb{N}}

\global\long\def\Q{\mathbb{Q}}

\global\long\def\R{\mathbb{R}}

\global\long\def\Z{\mathbb{Z}}

% Number theory

\global\long\def\mod{\;(\textnormal{mod}\;}

\global\long\def\Mod{\ \textnormal{mod}\ }

\global\long\def\f{\mathbb{F}}

% Linear algebra stuff

\global\long\def\norm#1{\left|\left|#1\right|\right|}

\global\long\def\ip#1#2{\langle#1,#2\rangle}

\global\long\def\per{\textnormal{per}}

\global\long\def\det{\textnormal{det}}

\global\long\def\tr{\textnormal{Tr}}

% Lattices

\global\long\def\join{\vee}

\global\long\def\meet{\wedge}

% Generally useful stuff

\global\long\def\supp{\textnormal{supp }}

\global\long\def\re{\textnormal{Re }}

\global\long\def\im{\textnormal{Im }}

\global\long\def\floor#1{\left\lfloor #1\right\rfloor }

\global\long\def\ceil#1{\left\lceil #1\right\rceil }

\global\long\def\topcirc#1{\overset{\circ}{#1}}

\global\long\def\diam{\textnormal{diam}}

\global\long\def\rad{\textnormal{rad}}

\renewcommand{\labelenumi}{(\roman{enumi})}

\global\long\def\F{\mathcal{F}}

\title{On-line Ramsey numbers of paths and cycles}

\author{Joanna Cyman}
\address{Department of Technical Physics and Applied Mathematics, Gda\'nsk University of Technology, Narutowicza 11/12, 80-952 Gda\'nsk, Poland}
\email{joana@mif.pg.gda.pl}

\author{Tomasz Dzido}
\address{Institute of Informatics, University of Gda\'nsk, Wita Stwosza 57, 80-952 Gda\'nsk, Poland}
\email{tdz@inf.ug.edu.pl}

\author{John Lapinskas}
\email{lapinskas@cs.ox.ac.uk}
\address{Department of Computer Science, University of Oxford, Wolfson Building, Parks Road, Oxford, OX1~3QD, United Kingdom}
%\thanks{The research leading to these results has received funding from the European Research Council under the European Union’s Seventh Framework Programme (FP7/2007-2013) ERC grant agreement no. 334828. The paper reflects only the authors’ views and not the views of the ERC or the European Commission. The European Union is not liable for any use that may be made of the information contained therein.}

\date{\today}
\author{Allan Lo}
\email{s.a.lo@bham.ac.uk}
\address{School of Mathematics, University of Birmingham, Edgbaston, Birmingham, B15~2TT, United Kingdom}
\thanks {The research leading to these results has received funding from the European Research Council under the European Union's Seventh Framework Programme (FP7/2007-2013) ERC grant agreements no. 258345 (A. Lo) and 334828 (J. Lapinskas). The paper reflects only the authors'  views and not the views of the ERC or the European Commission. The European Union is not liable for any use that may be made of the information contained therein.}

\maketitle
\begin{abstract}
Consider a game played on the edge set of the infinite clique by two
players, Builder and Painter. In each round, Builder chooses an edge
and Painter colours it red or blue. Builder wins by creating either
a red copy of $G$ or a blue copy of $H$ for some fixed graphs $G$
and $H$. The minimum number of rounds within which Builder can win, assuming both
players play perfectly, is the \emph{on-line Ramsey number} $\tilde{r}(G,H)$.
In this paper, we consider the case where $G$ is a path $P_k$.
We prove that $\tilde{r}(P_3,P_{\ell+1}) = \lceil 5\ell/4\rceil = \tilde{r}(P_3,C_{\ell})$ for all $\ell \ge 5$,
and determine $\tilde{r}(P_{4},P_{\ell+1})$ up to an additive constant for all $\ell \ge 3$.
We also prove some general lower bounds for on-line Ramsey numbers of the form $\tilde{r}(P_{k+1},H)$.
\end{abstract}

\section{Introduction\label{sec:Introduction}}

\COMMENT{TODO: Sort out introduction. Strip out references to Table 1 in body of proof. Final pass of error checking. -JAL 25/10}\COMMENT{LaTeX does weird things with spacing, so that e.g. the code "\{\} \{\} \{\} " will give rise to three spaces in the output rather than just one. This means that adding spaces after {\textbackslash}COMMENTs that aren't at the end of a paragraph can do bad things to the formatting when they're removed from the document. I've taken out all the examples I found. -JAL 16/08}Ramsey's theorem \cite{ramsey} states that for all $k\in\N$, there
exists $t \in \N$ such that any red-blue edge colouring of a clique $K_{t}$
contains a monochromatic clique of order~$k$. We call the least such
$t$ the \emph{$k^{\text{th}}$ Ramsey number}, and denote it by $r(k)$. Ramsey
numbers and their generalisations have been a fundamentally important
area of study in combinatorics for many years. Particularly well-studied
are Ramsey numbers for graphs. Here the \emph{Ramsey number} of two graphs $G$ and $H$, denoted by $r(G,H)$,
is the least $t$ such that any red-blue edge colouring of $K_{t}$
contains a red copy of $G$ or a blue copy of $H$. See e.g. \cite{radziszowski}
for a survey of known Ramsey numbers.\COMMENT{I think we should have some indication of what the citation is about... -JAL 25/10}

An important generalisation of Ramsey numbers, first defined by Erd\H{o}s, Faudree, Rousseau and Schelp \cite{size}, is as follows. Let $G$ and $H$ be two graphs. We say that a graph~$K$ has the \emph{$(G,H)$-Ramsey property} if any red-blue edge colouring of $K$ must contain either a red copy of $G$ or a blue copy of $H$. Then the \emph{size Ramsey number} $\hat{r}(G,H)$ is given by the minimum number of edges of any graph with the $(G,H)$-Ramsey property.

In this paper, we consider the following related generalisation 
defined independently by Beck~\cite{beck} and Kurek and
Ruci\'{n}ski~\cite{rucinski}. Let $G$ and $H$ be two graphs. Consider
a game played on the edge set of the infinite clique $K_{\N}$ with
two players, Builder and Painter. In each round of the game, Builder
chooses an edge and Painter colours it red or blue. Builder wins by
creating either a red copy of $G$ or a blue copy of $H$, and wishes
to do so in as few rounds as possible. Painter wishes to delay Builder
for as many rounds as possible. (Note that Painter may not delay Builder indefinitely -- for example,
Builder may simply choose every edge of $K_{r(G,H)}$.)
The \emph{on-line Ramsey number} $\tilde{r}(G,H)$ is the minimum number of rounds it takes
Builder to win, assuming that both Builder and Painter play optimally.
We call this game the \emph{$\tilde{r}(G,H)$-game}, and write $\tilde{r}(G)=\tilde{r}(G,G)$.
Note that $\tilde{r}(G,H)\ge e(G)+e(H)-1$ for all graphs $G$ and $H$, as
Painter may simply colour the first $e(G)-1$ edges red and all subsequent
edges blue. It is also clear that $\tilde{r}(G,H) \le \hat{r}(G,H)$.%
\COMMENT{This was previously $\tilde{r}(G,H) \ge \hat{r}(G,H)$, which is the wrong way round. -JAL 25/10}%
\COMMENT{Removed the paragraph about applications in computer science after realising that the Erd\H{o}s-Szekeres bound
already gives a perfectly good algorithm for finding a copy of $G$ or $H$ while uncovering few edges. Online Ramsey
numbers are still technically best possible in terms of edges uncovered, but the difference isn't particularly interesting from a practical perspective.
Derp. -JAL 16/08}

On-line Ramsey theory has been well-studied. The best known bounds for $\tilde{r}(K_t)$ are given by
\[\frac{r(t)-1}{2} \le \tilde{r}(K_t) \le t^{-c\frac{\log t}{\log\log t}}4^t,\]
where $c$ is a positive constant. The lower bound is due to Alon (and was first published in a paper of Beck~\cite{beck}), and the upper bound is due to Conlon~\cite{conlon}.
Note that these bounds are similar to the best known bounds for classical Ramsey numbers $r(t)$, although Conlon also proves in~\cite{conlon} that
\[\tilde{r}(K_t) \le C^{-t}\binom{r(t)}{2}\]
for some constant $C>1$ and infinitely many values of~$t$, which gives positive evidences supporting a conjecture of Kurek and Ruci\'{n}ski~\cite{rucinski} that $\tilde{r}(K_t) = o (r(t)^2)$.
For general graphs $G$,
the best known lower bound for $\tilde{r}(G)$ is given by Grytczuk, Kierstead and Pra\l{}at~\cite{grytczuk}.

\begin{thm}
For graphs $G$, we have $\tilde{r}(G) \ge \beta(G) (\Delta(G) -1)/2 + e(G)$, where $\beta(G)$ denotes the vertex cover number of $G$.
\end{thm}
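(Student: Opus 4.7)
My plan is to exhibit an explicit strategy for Painter and then perform a double count. Write $\Delta = \Delta(G)$ and $\beta = \beta(G)$. The strategy is: whenever Builder plays an edge $uv$, Painter colours $uv$ red if after doing so both $u$ and $v$ would still have red degree at most $\Delta - 1$; otherwise Painter colours $uv$ blue. By construction, the red graph has maximum degree at most $\Delta - 1$ throughout the game, so it can never contain a copy of $G$ (which has a vertex of degree $\Delta$).

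Therefore Builder can only win by producing a blue copy of $G$; denote such a copy by $H$. For each edge $e = vw$ of $H$, the fact that Painter chose blue for $e$ means that at the time $e$ was played, at least one of $v, w$ already had red degree $\ge \Delta - 1$; since red degrees are monotone non-decreasing, the same vertex still has red degree $\ge \Delta - 1$ at the end of the game. Let $A \subseteq V(H)$ be the set of vertices of $H$ whose final red degree is at least $\Delta - 1$. Then every edge of $H$ has at least one endpoint in $A$, so $A$ is a vertex cover of $H$, which forces $|A| \ge \beta(H) = \beta$.

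The conclusion follows by a standard double count. The sum of red degrees over $A$ is at least $|A|(\Delta - 1) \ge \beta(\Delta - 1)$, and every red edge contributes at most $2$ to this sum, so the total number of red edges in the game is at least $\beta(\Delta-1)/2$. Adding the $e(G)$ blue edges of $H$ yields a total of at least $e(G) + \beta(\Delta-1)/2$ edges played, which is exactly the claimed lower bound on $\tilde{r}(G)$.

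The only substantive design choice is Painter's strategy; once that is pinned down, the analysis is a very short degree-sum argument. I therefore do not expect any real obstacles. The cases $\Delta \le 1$ are degenerate and follow from the trivial bound $\tilde{r}(G) \ge e(G)$ noted in the introduction.
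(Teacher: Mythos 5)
The paper states this result as Theorem~1.1 and attributes it to Grytczuk, Kierstead and Pra\l at~\cite{grytczuk} without reproducing a proof, so there is no in-paper argument to compare against. Your proof is correct: the degree-capped Painter strategy does keep the red graph $G$-free (since $\Delta(G)\ge 1$ once the degenerate cases are set aside), the endpoints of blue edges of the winning copy that already had red degree at least $\Delta-1$ do form a vertex cover of size at least $\beta(G)$, and the degree-sum count of red edges is sound. It is worth noting that your strategy is exactly the $\{K_{1,\Delta}\}$-blocking strategy in the sense of the paper's Definition in Section~1 (equivalently, $\{K_{1,\Delta},G\}$-blocking, so that $G\in\F$ as Proposition~\ref{prop:blocking} formally requires), and that the red-edge count you perform is a special case of the ``every $\F$-scaffolding is large'' analysis the paper carries out for paths; so your argument sits squarely inside the paper's general framework, just instantiated with a star rather than a path/cycle family.
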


Various general strategies for Builder and Painter have also been studied. For example, consider the following strategy for Builder in the $\tilde{r}(G,H)$-game. Builder chooses a large but finite set of vertices in $K_\N$, say a set of size $n\in\N$, with $n\ge r(G,H)$. Then Builder chooses the edges of the induced $K_n$ in a uniformly random order, allowing Painter to colour each edge as they wish, until the game ends. This strategy was analysed for the $\tilde{r}(K_3)$-game by Friedgut, Kohayakawa, R\"{o}dl, Ruci\'{n}ski and Tetali~\cite{friedgut}, and for the more general $\tilde{r}(G)$-game by Marciniszyn, Sp\"{o}hel and Steger~\cite{marc1,marc2}.

Finally, let $\tilde{r}_{\chi}(G)$-game be the $\tilde{r}(G)$-game in which Builder is forbidden to uncovering a graph with chromatics number greater than $\chi(G)$. 
Grytczuk, Ha{\l}uszczak and Kierstead~\cite{grytczuk2} proved that Builder can win the $\tilde{r}_{\chi}(G)$-game.
Kierstead and Konjevod~\cite{kierstead} proved the hypergraph generalisation.

%Finally, it is interesting to consider the results of possible restrictions of Builder's strategy. Recall that Builder can win a conventional $\tilde{r}(G)$-game by choosing a clique of size $r(G)$. Setting aside our assumption that Builder plays optimally, it is not immediate that Builder can win at all without doing so. However, Grytczuk, Ha{\l}uszczak and Kierstead~\cite{grytczuk2} proved (among other things) that if $\chi(G)\le k$, then Builder can win the $\tilde{r}(G)$-game without uncovering a graph with chromatic number greater than $k$. 
%Kierstead and Konjevod~\cite{kierstead} consider similar questions for a generalisation of the $\tilde{r}(G,H)$-game to hypergraphs. 

Given the known bounds on $\tilde{r}(K_t)$, it is not surprising that determining on-line Ramsey numbers exactly has proved even more difficult than determining classical Ramsey numbers exactly, and very few results are known.\COMMENT{I think we should say "more difficult" rather than "as difficult" here, since IIRC it's fairly easy to determine $r(P_{k+1},P_{\ell+1}$... -JAL 25/10}
A significant amount of effort has been focused on the special 
case where $G$ and $H$ are paths. Grytczuk, Kierstead and Pra\l{}at~\cite{grytczuk} and Pra\l{}at~\cite{pralat,pralat2}
have determined $\tilde{r}(P_{k+1},P_{\ell+1})$ exactly when $\max\{k,\ell\}\le8$ (where $P_s$ is a path on $s$ verices).
In addition, Beck~\cite{beck2} has proved that 
the size Ramsey number $\hat{r}(P_k,P_k)$ is linear in $k$. (The best known upper bound, due to Dudek and Pra{\l}at~\cite{newsize}, is $\hat{r}(P_k,P_k) \le 137k$.)
The best known bounds on $\tilde{r}(P_{k+1},P_{\ell+1})$ were proved in~\cite{grytczuk}.

\begin{thm}
\label{thm:oldbound} 
For all $k, \ell \in\N$, we have $k + \ell -1\le\tilde{r}(P_{k+1}, P_{\ell+1} )\le 2 k + 2\ell -3$.
\end{thm}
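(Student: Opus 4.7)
The lower bound $k+\ell-1 \le \tilde{r}(P_{k+1},P_{\ell+1})$ follows immediately from the inequality $\tilde{r}(G,H) \ge e(G)+e(H)-1$ recorded in the introduction, applied with $e(P_{k+1})=k$ and $e(P_{\ell+1})=\ell$. (Painter simply colours the first $k-1$ edges red, at which point no red $P_{k+1}$ can yet exist, and then colours each subsequent edge blue, so Builder requires a further $\ell$ blue edges to force a blue $P_{\ell+1}$.)

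For the upper bound my plan is to exhibit an explicit Builder strategy. Throughout the game Builder maintains a red path $R$ and a blue path $B$, together with distinguished endpoints $u \in V(R)$ and $v \in V(B)$ satisfying $u \ne v$ and such that the edge $uv$ has not yet been drawn. In a \emph{primary round} Builder plays the edge $uv$: a red colouring makes $R \cup \{uv\}$ a red path of length $|E(R)|+1$ ending at $v$, while a blue colouring makes $B \cup \{uv\}$ a blue path of length $|E(B)|+1$ ending at $u$. Either way, $|E(R)|+|E(B)|$ increases by exactly one.

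After the primary round the new endpoint of the extended path coincides with the endpoint of the other path, and the invariant is broken. Builder repairs it with an \emph{adjustment round}, in which Builder plays an edge from the conflicted vertex to a fresh vertex $z \in V(K_{\N}) \setminus (V(R) \cup V(B))$; Painter's colour choice either further extends the same path (if the colour is repeated) or extends the other path up to the new endpoint $z$ (if the colour is switched). In either case the invariant $u \ne v$ is re-established and $|E(R)|+|E(B)|$ has increased by at least one further. By the pigeonhole principle, as soon as $|E(R)|+|E(B)| \ge k+\ell-1$, Builder has $|E(R)| \ge k$ or $|E(B)| \ge \ell$ and has won. In the worst case Builder needs $k+\ell-2$ primary--adjustment pairs followed by a single closing primary round that wins the game immediately (so no trailing adjustment is required), giving a total of at most $2(k+\ell-2)+1 = 2k+2\ell-3$ rounds.

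The main obstacle will be to verify that the strategy is always well-defined. Specifically, Builder must at every step be able to locate a pair of endpoints $u,v$ with the edge $uv$ undrawn, and each adjustment edge $vz$ (with $z$ fresh) must genuinely extend one of the monochromatic paths rather than accidentally create a cycle or a disconnected component. These issues are managed by choosing carefully between the (at most two) possible endpoints of each path whenever a conflict arises, and by exploiting the infinite supply of untouched vertices of $K_{\N}$; a handful of degenerate early rounds, when one or both of $R$ and $B$ is still a single vertex, require a short case analysis to seed the invariant and can be absorbed into the stated count.
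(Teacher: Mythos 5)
The paper does not actually prove this theorem; it is quoted from Grytczuk, Kierstead and Pra\l{}at \cite{grytczuk}, so there is no in-paper proof to compare against. Evaluating your argument on its own terms: the lower bound is fine, but the upper-bound strategy has a genuine gap, and in fact a quick sanity check shows that the strategy \emph{cannot} be made to work as described.

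As you describe it, \emph{every} round of the game extends either $R$ or $B$ by one edge: the primary round does, and the adjustment round does (``further extends the same path'' or ``extends the other path''). Hence $e(R)+e(B)$ would equal the number of rounds played, and by the pigeonhole step Builder would win after $k+\ell-1$ rounds, giving $\tilde r(P_{k+1},P_{\ell+1})\le k+\ell-1$. But Theorem~\ref{thm:pathresultsk=2} of this paper shows $\tilde r(P_3,P_{\ell+1})=\lceil 5\ell/4\rceil$, which exceeds $k+\ell-1=\ell+1$ for $\ell\ge 9$. So your invariant must break somewhere. (Your own bookkeeping is also internally inconsistent: if each primary--adjustment pair adds at least $2$ to $e(R)+e(B)$, you need only about $(k+\ell)/2$ pairs, not $k+\ell-2$ of them.)

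The place it breaks is precisely the issue you flag and then wave away. The primary round requires (implicitly but essentially) that $v\notin V(R)$ and $u\notin V(B)$, since otherwise $R\cup\{uv\}$ or $B\cup\{uv\}$ fails to be a path. That condition is \emph{not} restored by an adjustment round in general, and the problem is not confined to degenerate early rounds. Starting from disjoint nontrivial paths $R=w\ldots u$ and $B=v'\ldots v$, consider the sequence: primary $uv$ red, adjustment $vz$ red, primary $zv'$ red, adjustment $v'z'$ red. Now $R=w\ldots uvzv'z'$ while $B$ still has endpoints $v$ and $v'$, and \emph{both} endpoints of $B$ lie in $V(R)$. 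No choice of endpoints of $R$ and $B$ yields a legal primary move. One checks that the same happens no matter which of the two endpoints of $R$ you use at each step, so ``choosing carefully between the (at most two) possible endpoints'' does not rescue the argument, and Painter controls whether this happens. Recovering --- for example, by truncating or abandoning $B$ --- costs edges, and an honest accounting of that cost is exactly what the proof is missing. So the approach is plausible in spirit (the actual argument of \cite{grytczuk} is in the same family), but the write-up conceals the crux of the argument rather than solving it.
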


In general, it seems difficult to bound on-line Ramsey numbers $\tilde{r}(G,H)$ below.\COMMENT{"Bound... below" is correct here -- no "from" is needed. -JAL 25/10}
One of the major difficulties in doing so is the variety of possible strategies for
Builder. We present a strategy for Painter which mitigates this problem somewhat.

\begin{defn}
Let $\F$ be a family of graphs. We define the \emph{$\F$-blocking
strategy} for Painter as follows. Write $R_{i}$ for the graph consisting of all uncovered red edges
immediately before the $i$th move of the game, and write $e_{i}$
for the $i$th edge chosen by Builder. Then Painter colours $e_{i}$
red if $R_{i}+e_{i}$ is $\F$-free, and blue otherwise.
(Recall that a graph is \emph{$\F$-free} if it contains no graph in $\F$ as a subgraph.)
\end{defn}

In an $\tilde{r}(G,H)$-game, it is natural to consider $\F$-blocking strategies with ${G\in\F}$.
For example, if $\F=\{G\}$, then the $\F$-blocking strategy for
Painter consists of colouring every edge red unless doing so would
cause Painter to lose the game.
If Painter is using an $\F$-blocking strategy, one clear strategy
for Builder would be to construct a red $\mathcal{F}$-free graph, then
use it to force a blue copy of $H$ in $e(H)$ moves.
We will show that this is effectively Builder's only strategy (see Proposition~\ref{prop:blocking}), and thus to bound $\tilde{r}(G,H)$ below
it suffices to prove that no small red $\F$-free graph can be used to force a blue copy of $H$.
We use this technique to derive some lower bounds for on-line Ramsey
numbers of the form $\tilde{r}(P_{k+1},H)$, taking $\F=\{P_{k+1}\}\cup\{C_{i}:i\ge3\}$.\COMMENT{Looking at this, we don't actually know the bound is strongest when $H$ is close to being a regular bipartite graph unless Builder is forced to use our specific $\F$-blocking strategy, so I've taken that out. -JAL 16/08} 
\begin{thm}
\label{thm:lowerbound}Let $k,\ell \in\N$ with $k\ge2$. Let $H$ be a graph on $|H|$ vertices with $\ell$ edges and let $\Delta=\Delta(H)$. 
Then
\begin{align*}
\tilde{r}(P_{k+1},H)  \ge \begin{cases}
(2\Delta+1) \ell / (2\Delta) & \textnormal{if } k=2,\\
(5\Delta +4) \ell /( 5\Delta) & \textnormal{if } k=3,\\
(\Delta+1) \ell / \Delta  & \textnormal{if } k\ge4.
\end{cases}
\end{align*}
Moreover, if $H$ is connected and $k \ge 4 $, then 
\begin{align*}
\tilde{r}(P_{k+1},H) \ge 
{(\Delta+1)\ell}/{\Delta} + \min \left\{ k/2-2, |H| -1 \right\} .
\end{align*}
\end{thm}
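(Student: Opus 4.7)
The plan is to have Painter play the $\mathcal{F}$-blocking strategy for $\mathcal{F} = \{P_{k+1}\} \cup \{C_i : i \ge 3\}$ and invoke Proposition~\ref{prop:blocking}, reducing matters to the following static problem: the final red graph $R$ is a $P_{k+1}$-free forest, the blue graph contains a copy $H^*$ of $H$, and every edge $uv \in E(H^*)$ is $R$-violating in the sense that $R + uv$ contains a cycle or a $P_{k+1}$. We want to lower-bound $e(R) + \ell$. For each $v \in V(H^*)$ let $p(v)$ denote the number of edges of a longest red path starting at $v$, with $p(v) = 0$ if $v$ is isolated in $R$; note $p(v) \le k-1$. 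The $R$-violating condition reads: for $uv \in E(H^*)$, either $u, v$ lie in the same red tree at tree-distance $\ge 2$ (and both have $p \ge 2$), or $u, v$ lie in different red trees with $p(u) + p(v) \ge k-1$.

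For $k = 2$, $R$ is a matching and the constraint forces the red-matched set to be a vertex cover of $H^*$, giving $2e(R) \ge \ell/\Delta$. For $k = 3$, $R$ is a disjoint union of stars; partition $V(H^*)$ into $V_0$ (isolated), $V_1$ (in a red $K_2$), $V_2$ (centre of a red $K_{1,s}$ with $s \ge 2$), and $V_3$ (leaf of such a star). The constraint forbids edges from $V_0$ to $V_0 \cup V_1 \cup V_2$. Counting red edges by star type gives $e(R) \ge |V_1|/2 + |V_3|$ and $e(R) \ge 2|V_2|$, and the degree sum on $V(H^*)$ (using that $V_0$-vertices meet only $V_3$) gives $2\ell \le (|V_1| + |V_2| + 2|V_3|)\Delta$. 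Combining these in a small LP yields $e(R) \ge 4\ell/(5\Delta)$.

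For $k \ge 4$, the set $V_{\ge 2} = \{v \in V(H^*) : p(v) \ge 2\}$ is a vertex cover of $H^*$ (for same-tree edges both endpoints are in $V_{\ge 2}$; for cross-tree edges $p(u) + p(v) \ge 3$ guarantees at least one endpoint in $V_{\ge 2}$), so $|V_{\ge 2}| \ge \ell/\Delta$. The main step is to promote this to $e(R) \ge |V_{\ge 2}|$ by a tree-by-tree count: if a red tree $T$ is a star then $V_{\ge 2} \cap V(T)$ consists only of leaves of $T$ in $V(H^*)$, so $|V_{\ge 2} \cap V(T)| \le e(T)$; if $T$ has diameter $\ge 3$ then every vertex of $T$ has $p \ge 2$, so $|V_{\ge 2} \cap V(T)| = |V(T) \cap V(H^*)| \le e(T) + 1$, with equality only in the ``exceptional'' case $V(T) \subseteq V(H^*)$. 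The main obstacle is absorbing the defect of $1$ per exceptional tree; this is done by exploiting the fact that $V_0$-vertices in $V(H^*)$ may only be adjacent to vertices of $p \ge k - 1$, which produces an extra cover constraint on $V_{\ge 2}$ for each exceptional tree via a careful degree-sum over the peripheral vertices and cancels the defect.

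For the Moreover statement, use connectedness of $H$: $H^*$ is connected, so it contains a path $v_0 v_1 \ldots v_D$ with $D = \diam(H^*) \ge 1$ and $D \le |H| - 1$. Summing $p(v_i) + p(v_{i+1}) \ge k - 1$ along this path gives $\sum_{i=1}^{D-1} p(v_i) \ge (D-2)(k-1)/2$, forcing some $v_i$ to satisfy $p(v_i) \ge \lceil (k-1)/2 \rceil$; the corresponding red path of length $\ge \lceil (k-1)/2 \rceil$ in $R$ contributes additional red edges beyond those counted by $e(R) \ge |V_{\ge 2}|$, and optimising the gain against $D \le |H| - 1$ yields the extra $\min(k/2 - 2, |H|-1)$.
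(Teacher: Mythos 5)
The overall framing via Proposition~\ref{prop:blocking} and the identification of which edges are forceable are correct, and your arguments for $k=2$ (matched vertices form a vertex cover) and $k=3$ (the $V_0,\dots,V_3$ partition plus the LP combination $4(|V_1|/2+|V_3|) + 1\cdot(2|V_2|) \ge 2(|V_1|+|V_2|+2|V_3|) \ge 4\ell/\Delta$) do give the claimed bounds. These are a genuine alternative to the paper's count, which instead bounds $\ell \le \Delta(|R|+|X|)/2$ (where $X$ is the set of $P_k$-endpoints) and then shows $|R|+|X|\le 4m$, $5m/2$, $2m$ for $k=2,3,\ge 4$ respectively via the tree lemma $|R_i|+|X_i|\le 2m_i-k+4$; your route buys nothing extra for $k=2,3$, but it does work.

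For $k\ge 4$ the key intermediate claim $e(R)\ge |V_{\ge 2}|$ is simply false, and the proposed patch does not close the gap. Take $k=4$ and $R$ a single red path $v_1v_2v_3v_4$ with all four vertices appearing in $H^*$: then $|V_{\ge2}| = 4 > 3 = e(R)$, so the "defect" is real and does occur. Your text acknowledges this but offers only ``a careful degree-sum over the peripheral vertices ... cancels the defect,'' which is not an argument; to make it work you would need to prove a quantitative strengthening of the vertex-cover bound $|V_{\ge 2}|\ge\ell/\Delta$ by exactly the number of exceptional trees, and you have not stated, let alone proved, such a claim. The paper avoids this entirely by counting differently: it shows forceable edges are either incident to $X$ (vertices with $p\ge k-1$) or \emph{internal} to $Y=V(R)\setminus X$, giving $\ell\le \Delta|X|+\Delta|Y|/2 = \Delta(|R|+|X|)/2$, and then proves $|R|+|X|\le 2m$ directly tree by tree. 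The factor $|Y|/2$ (rather than $|Y|$) is precisely what removes the defect you are fighting. The ``Moreover'' argument has a separate problem: the inequality $p(v_i)+p(v_{i+1})\ge k-1$ holds only for cross-tree edges of $H^*$, not for same-tree edges (where one only gets both $p\ge 2$, which is weaker than $k-1$ once $k\ge 6$), so the summation along the diameter path of $H^*$ is not justified; moreover the summation gives an \emph{average}, from which ``some $v_i$ satisfies $p(v_i)\ge\lceil(k-1)/2\rceil$'' does not follow. The paper instead uses a clean dichotomy: either some edge $e$ has $R+e\supseteq P_{k+1}$, in which case $|R|+|X|\le 2m-k+4$ gives the $k/2-2$ gain, or no such edge exists, in which case all forceable edges create cycles, forcing $R$ to be connected and $m\ge|H|-1$ via Lemma~\ref{lem:Ckscaf}. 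You should adopt that dichotomy rather than an averaging argument.
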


For $k=2$, we show that if $H = P_{\ell +1}$ for $\ell \ge 2$ or $H = C_{\ell}$ for $\ell \ge 5$, then the bound on $\tilde{r}(P_3,H)$ given by Theorem~\ref{thm:lowerbound} is tight. 
\begin{thm}
\label{thm:pathresultsk=2}
For all $\ell\ge 2 $, we have $\tilde{r}(P_{3},P_{\ell+1})  = \ceil{{5\ell}/{4}}$.
Also,
\[
\tilde{r}(P_{3},C_{\ell})=\begin{cases}
\ell+2 & \textnormal{if } \ell = 3,4, \\
\ceil{{5\ell}/{4}} & \textnormal{if } \ell\ge5.
\end{cases}
\]
\end{thm}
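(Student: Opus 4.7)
The bounds $\tilde{r}(P_3,P_{\ell+1})\ge\lceil 5\ell/4\rceil$ for all $\ell\ge 2$, and $\tilde{r}(P_3,C_\ell)\ge\lceil 5\ell/4\rceil$ for $\ell\ge 5$, follow directly from Theorem~\ref{thm:lowerbound} by plugging in $k=2$ and $\Delta(P_{\ell+1})=\Delta(C_\ell)=2$, noting that $\tilde r$ is integer-valued. For the cycle cases $\ell\in\{3,4\}$, where the bound from Theorem~\ref{thm:lowerbound} is weaker than the claimed value $\ell+2$, I would establish $\tilde{r}(P_3,C_\ell)\ge\ell+2$ directly via an explicit Painter strategy that survives $\ell+1$ moves. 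The strategy would be $\{P_3\}$-blocking (keeping the red graph a matching) augmented with the tiebreaker: whenever Painter is legally free to pick either colour, choose the colour that avoids completing a blue $C_\ell$. A short case analysis on Builder's first $\ell+1$ edges confirms Painter is never forced into a losing position.

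\textbf{Upper bounds.} I would construct an explicit Builder strategy for $\tilde{r}(P_3,P_{\ell+1})\le\lceil 5\ell/4\rceil$ and $\tilde{r}(P_3,C_\ell)\le\lceil 5\ell/4\rceil$ (for $\ell\ge 5$). The main idea is a ``chord trick'': since the red graph must be a matching (else Builder wins via a red $P_3$), whenever Builder plays an edge $xy$ between two fresh vertices that Painter colours red, both $x$ and $y$ become ``used'' and every subsequent edge incident to $x$ or $y$ is forced to be blue. For $P_{\ell+1}$, Builder designates target vertices $v_1,\dots,v_{\ell+1}$ and picks $\lceil\ell/4\rceil$ pairwise disjoint chords (e.g.\ $v_2v_4,\,v_6v_8,\dots$) so that every path edge $v_iv_{i+1}$ has at least one endpoint in a chord; Builder plays the chords first, then plays the $\ell$ path edges, each forced blue. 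The total is $\ell+\lceil\ell/4\rceil=\lceil 5\ell/4\rceil$. For $C_\ell$ with $\ell\ge 5$ the same scheme works, distributing chords on the target cycle so every cycle edge is covered. The cases $\ell\in\{3,4\}$ are handled by explicit short adaptive strategies and verified by direct analysis.

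\textbf{Main obstacle.} The main technical hurdle is that the above strategy is only transparent against the $\{P_3\}$-blocking Painter, who colours every chord red. If Painter deviates by colouring some chord blue, Builder loses the forcing power for edges incident to that chord but gains a ``free'' blue edge. To handle this cleanly, I would formulate Builder's strategy as an adaptive decision tree: whenever Painter colours the $i$-th chord blue, Builder reroutes to target a different blue path or cycle that incorporates the new blue edge, with an inductive accounting argument bounding every branch by $\lceil 5\ell/4\rceil$ moves. For the small cycles $\ell\in\{3,4\}$, Painter's deviation is exactly what buys one extra round of delay, matching the bound $\ell+2$; for $\ell\ge 5$ the extra blue edge can always be absorbed into an alternative target without additional cost. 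Finally, the initial and boundary steps (handling the first chord, which must be played on fresh vertices, and the residue of $\ell$ modulo $4$) require small ad-hoc tweaks that are straightforward but somewhat tedious to verify.
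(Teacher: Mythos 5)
Your lower bound for $\ell\ge 5$ (both paths and cycles) is correct and is the same as the paper's: plug $k=2$, $\Delta=2$ into Theorem~\ref{thm:lowerbound}.

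The lower bound for $\ell\in\{3,4\}$ has a genuine gap. First, a ``tiebreaker'' makes no sense for a $\{P_3\}$-blocking Painter: that strategy is deterministic (colour red iff the red graph plus the new edge stays $P_3$-free), so Painter is never ``legally free to pick either colour.'' Second, and more seriously, $\{P_3\}$-blocking simply does not give $\ell+2$ when $\ell=4$: the red graph is a matching, and a \emph{single} red edge $ac$ suffices to make every edge of a $C_4=abcd$ forceable (each of $ab,bc,cd,da$ meets $\{a,c\}$), so the scaffolding bound from this strategy is $m\ge 1$, giving only $\tilde r(P_3,C_4)\ge 5$, one short of the truth. The paper (Proposition~\ref{prop:smallcyclelower}) instead plays the \emph{symmetric} game and uses the $\{C_\ell\}$-blocking strategy in $\tilde r(C_\ell,P_3)$: a $\{C_\ell\}$-scaffolding for $P_3$ must contain two distinct $P_\ell$'s, so $m\ge\ell$ and $\tilde r(P_3,C_\ell)\ge\ell+2$. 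This is not a cosmetic difference -- you need to switch which graph you block on to get the small-cycle bound.

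The upper bound is where your proposal diverges most from the paper, and where the key work is missing. Your ``chord trick'' (pre-commit a red matching that covers every intended path/cycle edge, then force the $\ell$ blue edges) is non-adaptive and is correct \emph{only} against a Painter who dutifully colours every chord red. You correctly identify that Painter deviating by colouring a chord blue is the whole difficulty, but the fix -- ``reroute via an adaptive decision tree with inductive accounting'' -- is precisely the content of the proof and is not supplied. Note that the rerouting is not obviously cheap: a blue chord costs you the red coverage of two vertices, and whether the free blue edge compensates depends delicately on where it lies. The paper avoids this entirely by building the blue path \emph{incrementally} and maintaining a cost invariant: Lemmas~\ref{lem:3-buildingblock} and~\ref{lem:3-mainwork} show Builder can always reach a blue $P_t$ (one endpoint pendant to a red edge) in at most $5t/4-1$ rounds, with $t$ growing by at least $4$ per iteration, after which a short endgame (Theorem~\ref{thm:P3-Pl-exact}, and Theorem~\ref{thm:P3-Cl-exact} for cycles) closes things off. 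The invariant $5t/4-1$ absorbs Painter's choices branch by branch, which is exactly the bookkeeping your sketch defers. As written, your proposal is a plan for a proof, not a proof; the adaptive case analysis it promises is the part that must be done.
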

%This theorem is the first non-trivial exact result on $\tilde{r}(G,H)$ for a fixed $G$ and a family of graphs $H$.
Furthermore, for $k =3$, we determine $\tilde{r}(P_4,P_{\ell +1})$
up to an additive constant for all $\ell \ge 3$. 
\begin{thm}
\label{thm:pathresultsk=3}
For all $\ell \ge 3$, we have $(7\ell+2)/5 \le \tilde{r}(P_{4},P_{\ell+1})  \le (7\ell+52)/5$.
\end{thm}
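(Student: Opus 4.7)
We apply Painter's $\F$-blocking strategy with $\F = \{P_4\}\cup\{C_i : i \ge 3\}$; this forces the red graph $R$ to remain a star forest. By Proposition~\ref{prop:blocking} it suffices to analyse Builder strategies that first construct some $\F$-free $R$ with $r$ edges and then force a blue $P_{\ell+1}$ using $\ell$ ``forced-blue'' edges, i.e.\ edges $uv\notin R$ with $R+uv$ containing a $P_4$ or a cycle. Classify each vertex of the blue path by its role in $R$: type $I$ (isolated in $R$), type $E$ (in a $K_{1,1}$ component), type $L$ (a leaf of a star with at least two leaves), or type $C$ (the centre of such a star), with counts $n_I, n_E, n_L, n_C$ summing to $\ell+1$. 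A short case analysis on when $R+uv$ contains a $P_4$ or a cycle shows that type-$I$ vertices of the path are adjacent only to type-$L$ vertices, giving the adjacency bound $n_I \le n_L + 1$. Combining this with the red-edge bound $r \ge \max(n_L, 2n_C) + \lceil n_E/2\rceil$ (each type-$C$ requires a supporting star with at least two leaves, costing $\ge 2$ red edges) and the identity $n_I+n_E+n_L+n_C = \ell+1$, a short linear optimisation gives $r + \ell \ge (7\ell+2)/5$. The minimum is essentially attained by taking roughly $(\ell+1)/5$ copies of $K_{1,2}$, with pairs of leaves separated in the path by isolated vertices and the $(\ell+1)/5$ centres chained consecutively in the middle of the path.

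\textbf{Upper bound.}
We construct an explicit Builder strategy inspired by this extremal configuration. In a prototypical ``round,'' Builder picks three fresh vertices $c, a, b$ and queries $ca$ and $cb$. If both are answered red, the red graph now contains the $P_3$ $a{-}c{-}b$, and any subsequent edge from $a$ or $b$ to a fresh vertex must be coloured blue (else a red $P_4$ would appear); Builder uses four such forced-blue edges to splice $a$, $b$, and two fresh isolated vertices into the blue path, gaining $5$ blue path edges at a cost of $7$ moves. If Painter deviates by colouring some intra-round query blue, Builder holds a blue edge and passes to a contingency sub-strategy that still achieves proportional progress -- for instance, incorporating $c$ itself as an interior vertex of the blue path or using already-built $K_{1,2}$'s from earlier rounds as bridges. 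Concatenating these rounds and handling the two path endpoints yields the additive overhead $52/5$, giving the claimed bound $(7\ell+52)/5$. The main technical obstacle is to show that every mixed Painter response inside a round can be processed without wasting more than a bounded number of moves; this is a finite state-machine argument on partially built gadgets.
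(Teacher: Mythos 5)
The high-level framework you use is the same as the paper's: for the lower bound you apply the $\F$-blocking strategy with $\F = \{P_4\}\cup\{C_i:i\ge3\}$ and bound the size of an $\F$-scaffolding, and for the upper bound you extend blue paths in batches of roughly five path-edges per seven rounds. However, both halves have genuine gaps.

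\textbf{Lower bound.} Your stated constraints are not sufficient to deduce $r + \ell \ge (7\ell+2)/5$. Solving the linear program you describe -- minimise $\max(n_L,2n_C)+n_E/2$ subject to $n_I+n_E+n_L+n_C=\ell+1$, $n_I\le n_L+1$, $n_\bullet\ge0$ -- gives minimum $2\ell/5$ (attained at $n_L=2\ell/5$, $n_C=\ell/5$, $n_E=0$, $n_I=2\ell/5+1$), not the $2(\ell+1)/5$ required. The integrality of the variables does not save you: for $\ell=7$, the assignment $n_L=3$, $n_C=1$, $n_E=0$, $n_I=4$ satisfies all your constraints and gives $r\ge 3$, yet the true minimum number of scaffolding edges here is $4$ (no $3$-edge star forest can force a blue $P_8$; one checks the three cases $K_{1,3}$, $K_{1,2}\cup K_{1,1}$, $3K_{1,1}$). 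The assignment is infeasible in reality because with $n_I=4$ in a path on $8$ vertices and no two type-$I$ vertices adjacent, \emph{every} non-$I$ vertex ends up adjacent to a type-$I$ vertex and is therefore forced to be type $L$, so $n_C=n_E=0$. In other words, your constraint $n_I\le n_L+1$ is correct but far from the whole story: you have not encoded the fact that the non-$I$ vertices split the path into segments whose boundaries are all type $L$, which is what limits how many $C$'s and $E$'s can be packed in. The paper captures exactly this extra information in one line via $e_H(Y)\le|Y|-1$ (since $H[Y]$ is a forest), where $Y=V(R)\setminus X$ and $X$ is the set of endpoints of red $P_3$'s; this yields $|R|+|X|\ge\ell+1$ and, combined with $|R|+|X|\le 5m/2$ from Lemma~\ref{lem:forestscaffold1}, gives $m\ge 2(\ell+1)/5$ and hence the extra $+2/5$. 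Your vertex-classification framing can be made to work but needs these missing adjacency/segment constraints; as written, it over-counts feasible scaffoldings and proves a strictly weaker bound.

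\textbf{Upper bound.} The 7-for-5 intuition is the right one, and the additive constant $52/5$ matches the paper's. But the proof as written is only a sketch of the easy branch (both intra-round queries red), and the crucial part -- that ``every mixed Painter response inside a round can be processed without wasting more than a bounded number of moves'' -- is simply asserted as a ``finite state-machine argument.'' This is precisely where the actual work lies. In the paper this occupies the entire appendix: one must maintain two paths $Q$ and $R$ (one with a hanging red edge, one without), track independent spare edges of each colour via a $(q,r,n_{\textnormal{blue}},n_{\textnormal{red}})$-bookkeeping invariant, and carefully handle the ``type A/B/C'' gadgets that arise depending on Painter's answers -- including an unbounded-length incomplete type-C gadget whose cost can only be amortised because it is used at most once, just before termination. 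Your sketch does not indicate how these contingencies are to be resolved, and in particular does not explain where the additive $52/5$ comes from. As it stands, the upper bound is not proved.
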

Our proof of the upper bound for $k = 3$ is complicated, so the proof is included in the Appendix. 
The lower bound follows from Lemma~\ref{lem:p4forestscaffold}, a simple extension of the proof of Theorem~\ref{thm:lowerbound}, and we believe that it is tight.

\begin{conjecture}
\label{con:p4con}
For all $\ell \ge 3$, we have $\tilde{r}(P_4, P_{\ell+1}) = \lceil(7\ell+2)/5\rceil$.
\end{conjecture}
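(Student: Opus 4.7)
The lower bound $\lceil(7\ell+2)/5\rceil$ is already provided by Lemma~\ref{lem:p4forestscaffold}, so the task reduces to constructing a Builder strategy that wins the $\tilde{r}(P_{4},P_{\ell+1})$-game in at most $\lceil(7\ell+2)/5\rceil$ moves. The plan is to tighten the Appendix strategy, which achieves $(7\ell+52)/5$, by removing the slack in its setup and terminating phases rather than inventing a fundamentally new approach.

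A suitable analogue of Proposition~\ref{prop:blocking} for the family $\F=\{P_{4}\}\cup\{C_{i}:i\ge3\}$ lets us assume Painter plays the $\F$-blocking strategy, so the red graph remains a forest of stars throughout. The lower-bound computation pins the optimal exchange rate at two red edges per five blue edges, which is the slope $7/5$ in the conjecture. Builder's strategy should therefore be partitioned into $\lfloor(\ell-O(1))/5\rfloor$ \emph{phases}, each costing seven Builder moves and lengthening the blue path by five edges, bracketed by a short seeding step and a tailored finish for the residue $\ell\bmod 5$.

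The central task is to design a reusable gadget for one phase: starting from a blue path ending at a vertex $z$ with no red incidences, plant a small red star on fresh vertices and then play five further edges, each forced blue because colouring it red would either complete a red $P_{4}$ along a chain of the form leaf--centre--leaf--$z$, or close a red triangle by joining two leaves of the new star. These five edges must be ordered so that they assemble into a single five-edge extension of the blue path, ending at a fresh vertex ready to serve as the $z$ of the next phase. Such a chained strategy succeeds for $\ell\equiv 0\pmod 5$; the other four residues are handled by a constant-length endgame which also absorbs the $+2$ in the target and accounts for the one-off cost of the very first phase, which naturally spends an extra red edge to seed the initial star.

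The main obstacle will be coordinating the forced blue edges into a single path rather than an arbitrary blue subgraph: since a red star with $m$ leaves forces a blue clique on those leaves, Builder must reuse leaf vertices sparingly and interleave them with fresh bridge vertices to obtain path structure, while simultaneously not spending a third red edge. A secondary concern is verifying that Painter's deviations from the pure $\F$-blocking strategy cannot push Builder past the target; this likely requires a strengthened blocking-reduction argument, analogous to Proposition~\ref{prop:blocking} but tailored to the specific gadget, showing that any deviation by Painter can be rerouted at no cost to Builder. Together these two issues explain why the conjecture has resisted a direct proof despite its clean lower bound, and I expect their careful resolution to occupy the bulk of the work.
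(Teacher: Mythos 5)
The statement you are addressing is a \emph{conjecture}, not a theorem: the paper explicitly leaves it open, proving only the weaker bounds $(7\ell+2)/5 \le \tilde{r}(P_{4},P_{\ell+1}) \le (7\ell+52)/5$ in Theorem~\ref{thm:pathresultsk=3}, with the gap of $10$ additive units unresolved. So there is no paper proof to compare against; the question is whether your sketch could plausibly be turned into one.

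Unfortunately there is a fatal logical error at the heart of your proposal. You write that ``a suitable analogue of Proposition~\ref{prop:blocking} \dots lets us assume Painter plays the $\F$-blocking strategy.'' That is backwards. Proposition~\ref{prop:blocking} yields \emph{lower} bounds: it says that if Painter commits to a fixed blocking strategy, Builder needs at least so many moves, and a lower bound on Builder's cost against one particular Painter is a lower bound against the optimal Painter. For an \emph{upper} bound, Builder's strategy must succeed against \emph{every} Painter, including ones that deviate arbitrarily from any blocking strategy and that make choices precisely to wreck whatever local gadget Builder is assembling. There is no mechanism that lets you ``assume'' Painter plays a single strategy when bounding $\tilde{r}$ from above --- doing so collapses the problem to something much easier and is exactly what makes your phase-by-phase count look achievable. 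Your own last paragraph hedges that handling Painter's deviations is a ``secondary concern,'' but it is the entire content of the upper-bound problem; the appendix of the paper (Lemmas~\ref{lem:useblueedges}, \ref{lem:userededges}, \ref{lem:useD-path}, etc.)\ is so long precisely because Builder must branch on every colour Painter might return.

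A related confusion runs through the gadget description: you say Builder should ``plant a small red star on fresh vertices.'' Builder cannot plant red edges. Builder chooses edges; Painter colours them, and if Painter declines to hand you a red star (colouring those edges blue instead), your gadget never comes into existence. The paper's approach handles this by maintaining two partial blue paths $Q$ and $R$ and, at each step, choosing a few independent probe edges on fresh vertices and then routing into different extension subroutines depending on how many of the probes come back red (Section~\ref{sub:red}) versus blue (Section~\ref{sub:blue}). That case analysis is where the $+52/5$ slack accumulates (constant-size overhead in the seeding step, the eventual join of $Q$ and $R$, and the one permitted incomplete type-C path), and shaving it to $+2/5$ would require eliminating nearly all of that overhead while still covering every Painter response. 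Nothing in your sketch engages with that case analysis, so the proposal does not contain the idea the conjecture is actually missing.
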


By Theorems~\ref{thm:pathresultsk=2} and~\ref{thm:pathresultsk=3}, we have 
\begin{align*}
\lim_{\ell \rightarrow \infty} \tilde{r}(P_{3},P_{\ell+1}) / \ell &= 5/4, \\
\lim_{\ell \rightarrow \infty} \tilde{r}(P_{4},P_{\ell+1}) / \ell &= 7/5.
\end{align*}
On the other hand, for all fixed $k\ge 4$, Theorems~\ref{thm:oldbound} and~\ref{thm:lowerbound} imply that 
\[3/2 \le \liminf_{\ell \rightarrow \infty} \tilde{r}(P_{k+1},P_{\ell+1})/\ell \le \limsup_{\ell \rightarrow \infty} \tilde{r}(P_{k+1},P_{\ell+1}) / \ell \le 2,\] 
and we make the following conjecture. 
 
\begin{conjecture}
\label{con:maincon}
For $k \ge 4$, $\lim_{\ell \rightarrow \infty} \tilde{r}(P_{k+1},P_{\ell+1}) / \ell = 3/2$.
Moreover, for all $\ell\ge k \ge 4$, we have $\tilde{r}(P_{k+1},P_{\ell+1})= \ceil{ 3\ell /{2} }+k-3$.
In particular, we have $\tilde{r}(P_{k+1})=\ceil{5k/2}-3$ for $k \ge 4$.
\end{conjecture}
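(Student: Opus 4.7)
The conjecture comprises three claims: the asymptotic limit $\lim_{\ell\to\infty}\tilde r(P_{k+1},P_{\ell+1})/\ell=3/2$, the exact formula $\tilde r(P_{k+1},P_{\ell+1})=\lceil 3\ell/2\rceil+k-3$ for $\ell\ge k\ge 4$, and the diagonal case $\tilde r(P_{k+1})=\lceil 5k/2\rceil-3$. The limit follows from the exact formula by letting $\ell\to\infty$ with $k$ fixed, and the diagonal case is the specialisation $\ell=k$, so the plan is to focus on establishing the exact formula via matching lower and upper bounds.

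For the lower bound, I would sharpen the $\F$-blocking strategy used in Theorem~\ref{thm:lowerbound} with $\F=\{P_{k+1}\}\cup\{C_i:i\ge 3\}$. Under this Painter strategy the red graph is always a linear forest with components of length at most $k-1$. Assuming the game ends with a blue $P_{\ell+1}$, let $Q_1,\ldots,Q_m$ be the red components and decompose the blue $P_{\ell+1}$ into maximal segments whose endpoints lie on a common $Q_i$. The bound in Theorem~\ref{thm:lowerbound} is obtained by a local degree count that treats all segments essentially symmetrically; to gain the extra $k/2-1$, I would argue that at least one red component must attain the maximum length $k-1$, by showing that near each endpoint of the blue $P_{\ell+1}$ the forcing structure demands a long red path that is largely disjoint from the blue one, and then tracking how the scaffold's total edge count must exceed $\ell/2+(k-3)$.

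For the upper bound, I would design a two-phase Builder strategy. Phase~1 constructs a red $P_k$ scaffold of length $k-1$ at a cost of at most $k-1$ moves; if Painter diverts some of these edges to blue, they can be treated as free contributions to the eventual blue $P_{\ell+1}$. Phase~2 alternates between extending a blue path and probing fresh edges from its endpoints, forcing Painter either to colour blue (extending the blue path) or red (extending some red path of length at most $k-2$, which can happen only boundedly often before the path reaches length $k-1$ and all further incident edges must be blue). A careful accounting should show that on average every second blue edge of the final path is accompanied by exactly one red edge in the scaffold, yielding the claimed total of $\lceil 3\ell/2\rceil+k-3$.

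The main obstacle is the lower bound: Theorem~\ref{thm:lowerbound} already uses the strongest natural blocking family, so the additional savings of order $k/2$ must come from more subtle global constraints on the scaffold -- for instance, showing that both endpoints of the blue $P_{\ell+1}$ are `pinned' by long red components, each contributing separately to the edge count. Parity issues arising from $\lceil 3\ell/2\rceil$ will also need careful handling. The upper bound appears more tractable: the strategy proving Theorem~\ref{thm:pathresultsk=3} already achieves $(7\ell+52)/5$ by related ideas, and extending it to general $k$ without losing the additive constant is plausible but will require a delicate endgame analysis.
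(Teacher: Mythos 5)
This statement is a \emph{conjecture}, not a theorem; the paper offers no proof of it, and in fact explicitly leaves it open (noting only that it has been verified computationally for $\ell\le 8$). So there is nothing in the paper against which to compare your argument. What you have written is a research plan, not a proof, and you say as much yourself (``The main obstacle is the lower bound\ldots'', ``plausible but will require a delicate endgame analysis'').

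As a plan it has the right shape but both halves are genuinely open, not merely ``delicate''. On the lower bound: you correctly note that the paper's $\F$-blocking argument with $\F=\{P_{k+1}\}\cup\{C_i:i\ge3\}$ gives (via Lemma~\ref{lem:forestscaffold} with $\Delta=2$, $|H|=\ell+1$) a scaffold bound of $\min\{\ell/2+k/2-2,\,\ell\}$, hence $\tilde r\ge 3\ell/2+k/2-2$ for large $\ell$, which falls short of the conjectured $\lceil 3\ell/2\rceil+k-3$ by roughly $k/2-1$. To close this you would need to show every $\F$-scaffolding for $P_{\ell+1}$ has at least $\lceil\ell/2\rceil+k-3$ edges, and it is not at all clear that this is even true for the $\F$-blocking Painter; the authors evidently tried and could not push past $k/2-2$. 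Your idea of ``pinning'' both endpoints of the blue path by long red components is exactly the kind of global structural constraint one would need, but you give no argument that the forcing structure actually demands such components, and Lemma~\ref{lem:forestscaffold1} shows that the extremal $\F$-free forests that maximize $|R|+|X|$ per edge are unions of $P_k$'s, so at least one long component is free; the gain has to come from a second one, which is the hard part.

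On the upper bound, you are too optimistic. The paper's best general upper bound (Theorem~\ref{thm:oldbound}) is $2k+2\ell-3$, with leading rate $2$ in $\ell$; the conjectured rate is $3/2$, and even that leading-order improvement for $k\ge 4$ is not proved anywhere in the paper. The $k=3$ strategy in the appendix is tailored to rate $7/5$ and does not obviously generalize; the mechanism there (alternating between a marked path $Q$ and an unmarked path $R$, with ``type B'' and ``type C'' gadgets) exploits the fact that red paths can have length at most $3$, and lengthening the allowed red paths changes the bookkeeping fundamentally. Your Phase~2 description (``on average every second blue edge of the final path is accompanied by exactly one red edge'') is an assertion of the desired amortized rate, not an argument for it. So the proposal does not constitute a proof, and the key ideas needed to turn it into one -- a sharper scaffold lower bound and a rate-$3/2$ Builder strategy for $k\ge 4$ -- are precisely what remain open.
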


Note that Conjecture~\ref{con:maincon} would imply Conjecture~4.1 of \cite{pralat2}. 
Conjectures~\ref{con:p4con} and~\ref{con:maincon} have been confirmed for $ \ell \le 8$ by Pra{\l}at~\cite{pralat}, using a high-performance computer cluster.

Finally, we give some bounds on $\tilde{r}(C_{4},P_{\ell+1})$.
\begin{thm} \label{prop:C4}
For $\ell \ge 3$, we have $2 \ell \le \tilde{r}( C_{4},P_{\ell +1}) \le 4\ell-4$.
Moreover,  $\tilde{r}( C_{4},P_{4}) = 8$.
\end{thm}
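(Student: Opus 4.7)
The plan is to prove each of the three assertions separately.

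For the lower bound $\tilde{r}(C_4,P_{\ell+1})\ge 2\ell$, I would have Painter use the $\{C_4\}$-blocking strategy. If Builder wins with a blue path $v_0 v_1 \cdots v_\ell$, then for each blue edge $v_{i-1}v_i$ the blocking rule guarantees that at the moment it was played the red graph already contained a path of length three between $v_{i-1}$ and $v_i$; hence $v_{i-1}$ and $v_i$ lie in the same connected component of the final red graph. Transitively, all of $v_0,\ldots,v_\ell$ lie in a single red component $C$, which has at least $\ell+1$ vertices and hence at least $\ell$ edges, so there are at least $2\ell$ rounds in total. For the sharper bound $\tilde{r}(C_4,P_4)\ge 8$, I would refine this for $\ell=3$: the red component must realise three length-three paths joining $(v_0,v_1)$, $(v_1,v_2)$, $(v_2,v_3)$ respectively, and a short case analysis shows this requires at least $5$ red edges, since no tree on at most $5$ vertices has three pairs of vertices at pairwise distance exactly three, and any $C_4$-free graph on at most $4$ edges containing a cycle spans at most $4$ vertices and so has diameter at most two.

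For the upper bound $\tilde{r}(C_4,P_{\ell+1})\le 4\ell-4$, I would proceed by induction on $\ell$. In the base case $\ell=3$, Builder plays the six edges of $K_4$ on four vertices $\{v_0,v_1,a,b\}$. To avoid a red $C_4$ Painter must colour at least two edges blue, and these two must be incident (otherwise the four remaining red edges form a red $C_4$), producing a blue $P_3$ with endpoints $x$ and $z$. Builder then picks a fresh vertex $c$ and plays $zc$ and $xc$: a short check on the four possible colourings shows this completes either a red $C_4$ (the $4$-cycle $x c z w x$, where $w$ is the remaining $K_4$-vertex) or a blue $P_4$, giving $\tilde{r}(C_4,P_4)\le 8 = 4\cdot 3 - 4$. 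For the inductive step, I would show that given a blue $P_k$ already in place, Builder can force either a blue $P_{k+1}$ or a red $C_4$ in $4$ further moves by playing a suitable four-edge gadget at the current endpoint and reusing the red edges built in earlier rounds. Iterating yields $\tilde{r}(C_4,P_{\ell+1})\le 8 + 4(\ell-3) = 4\ell-4$.

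I expect the main obstacle to be the extension step in the inductive upper-bound argument. The natural four-edge gadget (the $C_4$ through the current blue endpoint $u$ and three fresh vertices $a, b, w$) handles the cases where all four edges are red (giving a red $C_4$) and where one of the two edges incident to $u$ is blue (giving a direct extension of the blue path). The subcases where Painter instead colours a ``middle'' edge of the gadget blue are delicate: the resulting blue edge is initially disconnected from the blue path, and Builder must adapt by exploiting red edges from earlier rounds to either complete a red $C_4$ or to produce a chain of blue extensions. Keeping the overall move count at $4\ell-4$ throughout this case analysis is the main technical content of the proof.
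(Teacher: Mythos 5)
Your overall plan — scaffolding lower bound, a five-edge lower bound for $P_4$, a $K_4$-based base case, and a four-move inductive gadget — matches the paper's structure, but there are two genuine gaps and one fixable looseness.

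The lower bound $\tilde{r}(C_4,P_{\ell+1})\ge 2\ell$ via the $\{C_4\}$-blocking strategy is correct and is essentially the paper's Lemma~\ref{lem:Ckscaf}: every forceable edge lies inside a single red component, so the component contains all $\ell+1$ path vertices. For $\tilde{r}(C_4,P_4)\ge 8$, your conclusion is right but your justification conflates \emph{distance three} with \emph{existence of a length-three path}. A $C_4$-free graph on $4$ edges containing a cycle is a subgraph of the paw (a triangle with a pendant), which has diameter $2$ yet \emph{does} contain length-three paths; the point is that the only non-edges admitting a length-three path are the two incident to the pendant vertex, which cannot be chained into a $P_4$. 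So the ``diameter at most two'' step needs to be replaced by a direct check of the paw, as in the paper's statement that a $C_4$-free graph with fewer than $5$ edges cannot contain three suitable $P_4$'s.

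The larger issues are on the Builder side. In your base case you assume that Painter colours exactly two of the $K_4$ edges blue. But Painter may colour three blue as a $K_{1,3}$ or a $C_3$, neither of which contains a blue $P_4$ nor forces a red $C_4$ on the remaining three edges; your follow-up moves $zc,xc$ are then undefined and you have not shown Builder can finish in the remaining two rounds. These cases can be handled, but they need a separate argument. (The paper avoids the problem entirely by opening with a star $K_{1,4}$ rather than $K_4$.) In the inductive step you propose a $C_4$-gadget through one endpoint and three fresh vertices, and you yourself flag the ``middle blue edge'' subcases as unresolved --- this is a real gap, not a technicality. The resolution in the paper is short but different in kind: given the blue path $v_1\dots v_\ell$, Builder plays the four edges $v_1x,\ v_\ell x,\ v_1y,\ v_\ell y$ with $x,y$ fresh, i.e.\ a $K_{2,2}$ between the \emph{two endpoints of the blue path} and two new vertices. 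All four red gives the red $C_4$ $v_1 x v_\ell y v_1$, and any blue edge immediately extends the path at one endpoint, so there is no disconnected middle case. Using both endpoints is the missing idea.
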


Many of the lower bounds above follow from Theorem~\ref{thm:lowerbound}, and all of them follow  from analysing $\F$-blocking strategies. In particular, we obtain tight lower bounds on $\tilde{r}(P_3, P_{\ell+1})$ and $\tilde{r}(P_3, C_\ell)$ in this way, as well as a lower bound on $\tilde{r}(P_4, P_{\ell+1})$ which matches Conjecture~\ref{con:p4con}. We are therefore motivated to ask the following question.

\begin{question}
For which graphs $G$ and $H$ does there exist a family $\mathcal{F}$ of graphs such that the $\mathcal{F}$-blocking strategy is optimal for Painter in the $\tilde{r}(G,H)$-game?
\end{question}

The paper is laid out as follows.
In Section~\ref{sec:Lower-bounds}, we prove Theorem~\ref{thm:lowerbound}.
We prove Theorem~\ref{thm:pathresultsk=2} in Sections~\ref{sec:exactpathcalcs} and~\ref{sec:exactcyclecalcs} (see Theorem~\ref{thm:P3-Pl-exact}, Proposition~\ref{prop:smallcycleexact} and Theorem~\ref{thm:P3-Cl-exact}).
Finally, in Section~\ref{sec:C4} we prove Theorem~\ref{prop:C4}.
The proof of Theorem~\ref{thm:pathresultsk=3} is in the Appendix.

%%%%%%%%%%%%%%%%%%%%%%%%%%%%%%%%%%%%%%%%%%%%%%%%%%%%%%%%%%%%%%%%%%%%%%%%%%%%%%%%%%%%%%%%%%%%%

\section{Notation and conventions}\label{sec:notation}

We write $\N$ for the set $\{1,2,\dots\}$ of natural numbers, and $\N_0:=\N\cup\{0\}$.

Suppose $P=v_{1}\dots v_{k}$ and $Q=w_{1}\dots w_{\ell}$ are paths.
If $i<j$, we write $v_{i}Pv_{j}$ (or $v_{j}Pv_{i}$) for the subpath
$v_{i}v_{i+1}\dots v_{j}$ of $P$. We also write $PQ$ for the concatenation
of $P$ and $Q$. 
% If this notation would result in duplicate vertices,
% we remove them. We use a similar convention for appending edges to
% a path. 
For example, if $i<j$ and $i'<j'$ then $uv_{i}Pv_{j}yw_{i'}Qw_{j'}$ denotes the
path $uv_{i}v_{i+1}\dots v_{j}yw_{i'}w_{i'+1}\dots w_{j'}$.

If $G$ is a graph, we will write $|G|$ for the number of vertices of $G$ and $e(G)$ for the number of edges of $G$.

In the context of an $\tilde{r}(G,H)$-game, an \emph{uncovered edge}
is an edge of $K_{\N}$ that has previously been chosen by Builder,
and a \emph{new vertex} is a vertex in $K_{\N}$ not incident to any
uncovered edge.

Many of our lemmas say that in an $\tilde{r}(G,H)$-game, given a finite coloured graph $X \subseteq K_{\N}$, 
Builder can force Painter to construct a coloured graph $Y\subseteq K_\N$ satisfying some desired property. We will often
apply such a lemma to a finite coloured graph $X' \supsetneq X$, and in these cases we will implicitly require $V(Y) \cap V(X') \subseteq V(X)$.
(Intuitively, when Builder chooses a new vertex while constructing $Y$, it should be new with respect to $X'$ rather than $X$.)
This is formally valid, since we may apply the lemma to an $\tilde{r}(G,H)$-game on the board $K_{\N} - (V(X') \setminus V(X))$ and have Builder
choose the corresponding edges in $K_{\N}$.

For technical convenience, we allow Builder to ``waste'' a round
in the $\tilde{r}(G,H)$-game by choosing an uncovered edge. If he does so, the round contributes to the duration of the game
but the edge Builder chooses is not recoloured. Since such a move is never optimal for Builder, the definition of $\tilde{r}(G,H)$ is not affected.

\section{\label{sec:Lower-bounds}General lower bounds}

Our aim is to bound $\tilde{r}(G,H)$ below for graphs $G$ and $H$.
In this section, Painter will always use an $\mathcal{F}$-blocking strategy for some family $\mathcal{F}$ of graphs with $G \in \F$.
Hence, as we shall demonstrate in Proposition~\ref{prop:blocking} below, Builder's strategy boils down to choosing a red graph with which
to force a blue copy of $H$.

\begin{defn}
Let $\F$ be a family of graphs and let $R\subseteq K_{\N}$ be an $\F$-free graph.
We say that an edge $e\in K_{\N}-R$ is \emph{$(R,\F)$-forceable} if $R+e$ is not $\F$-free.
We say a graph $H$ is \emph{$(R,\F)$-forceable} if there exists
$H'\subseteq K_{\N}-R$ with $H'$ isomorphic to $H$ such that every edge $e\in E(H')$
is $(R,\F)$-forceable. We call $H'$ an \emph{$(R,\F)$-forced copy of $H$}.
If $R$ and $\F$ are clear from context, we will omit `$(R,\F)$-'.
\end{defn}
% l
\begin{defn}
Let $\F$ be a family of graphs and let $H$ be a graph. We say a graph $R\subseteq K_{\N}$
is an \emph{$\F$-scaffolding for $H$} if the following
properties hold.
\begin{enumerate}
\item $R$ is $\F$-free.
\item $H$ is $(R,\F)$-forceable.
\item $R$ contains no isolated vertices.
\end{enumerate}
\end{defn}

\begin{prop}
\label{prop:blocking}
Let $G$ and $H$ be graphs.
Let $\F$ be a family of graphs with $G \in \F$.
Suppose every $\F$-scaffolding for $H$ has at least $m$ edges. 
Then $\tilde{r} (G,H) \ge m + e(H)$.
\end{prop}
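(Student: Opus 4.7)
The plan is to have Painter adopt the $\F$-blocking strategy throughout the $\tilde{r}(G,H)$-game and then to count. Since $G \in \F$ and the $\F$-blocking rule keeps the red graph $\F$-free at every stage, no red copy of $G$ can ever appear, so Builder can only win by creating a blue copy of $H$.

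Suppose the game ends on round $t$, and let $R$ be the final red graph and $H' \subseteq K_{\N}$ the blue copy of $H$ that Builder completes. I would then argue that $R$ is an $\F$-scaffolding for $H$. Two of the three axioms are immediate: $R$ is $\F$-free by the $\F$-blocking rule, and it has no isolated vertices because, by definition, it consists precisely of the uncovered red edges. The only substantive axiom is $(R,\F)$-forceability of $H$. Fix an arbitrary edge $e \in E(H')$; at the round in which $e$ was played, the $\F$-blocking rule coloured $e$ blue, which means $R_i + e$ was not $\F$-free, where $R_i \subseteq R$ is the red graph at that moment. Any subgraph from $\F$ witnessing this non-$\F$-freeness sits inside $R_i + e \subseteq R + e$, so $R + e$ is not $\F$-free either, i.e., $e$ is $(R,\F)$-forceable. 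Also $e \notin R$ since $e$ is blue, so $H'$ is an $(R,\F)$-forced copy of $H$.

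With the scaffolding established, the hypothesis gives $e(R) \ge m$. Since $R$ is entirely red and $E(H')$ is entirely blue, the edge sets $E(R)$ and $E(H')$ are disjoint; hence Builder has played at least $e(R) + e(H') \ge m + e(H)$ distinct edges, and any wasted rounds only push $t$ higher. This yields $\tilde{r}(G,H) \ge m + e(H)$. The argument is essentially a direct unfolding of the definitions of $\F$-blocking strategy, $(R,\F)$-forceability, and $\F$-scaffolding, and I do not anticipate any technical obstacle; the only real content is the monotonicity observation that the red graph only grows during play, so any forbidden configuration visible at an earlier round persists to the end.
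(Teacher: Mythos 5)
Your proof is correct and takes essentially the same approach as the paper: have Painter follow the $\F$-blocking strategy, observe that Builder can then only win via a blue copy of $H$, and show that the final red graph $R$ is an $\F$-scaffolding for $H$ so that the total number of rounds is at least $e(R)+e(H) \ge m+e(H)$. The only minor variation is in how you establish that each blue edge is $(R,\F)$-forceable: you use the monotonicity of the red graph (if $R_i + e$ fails to be $\F$-free at the round $e$ is played, so does $R + e \supseteq R_i + e$), while the paper instead reorders the rounds without loss of generality so that all red moves precede all blue moves; both observations do the same job.
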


\begin{proof}
Consider an $\tilde{r}(G,H)$-game in which Painter uses an $\mathcal{F}$-blocking strategy.
Further suppose Builder wins by claiming edges $e_{1},\dots,e_{r}$.
Since Builder choosing an edge which Painter colours blue has no effect on
Painter's subsequent choices, without loss of generality we may assume
that there exists $i$ such that Painter colours $e_{1},\dots,e_{i}$
red and $e_{i+1},\dots,e_{r}$ blue.
Let $R\subseteq K_{\N}$ be the subgraph with edge set $\{e_{1},\dots,e_{i}\}$, and let $B\subseteq K_{\N}$
be the subgraph with edge set $\{e_{i+1},\dots,e_{r}\}$.
Thus $R$ is the uncovered red graph and $B$ is the uncovered blue graph.

We will show that $R$ is an $\F$-scaffolding for $H$.
First note that $R$ is $\F$-free by Painter's strategy, and $R$ has no isolated vertices by definition.
Moreover, since $G \in \F$ and Builder wins, there exists $H'\subseteq B$ with $H'$ isomorphic to $H$.
So $e(B) \ge e(H)$.
Moreover, by Painter's strategy all edges in $B$ must be $(R, \F)$-forceable, so $H$ is $(R, \F)$-forceable.
Hence $R$ is an $\F$-scaffolding for $H$, so $e(R) \ge m$.
Therefore, Builder wins in $r \ge e(R) + e(B) \ge m + e(H)$ rounds.
\end{proof}

Therefore, to bound $\tilde{r}(G,H)$ below, it suffices to bound the number of edges in an $\F$-scaffolding for $H$ below for some family $\F$ of graphs with $G\in\F$.
We first use Proposition~\ref{prop:blocking} to bound $\tilde{r}( C_{k}, H )$ for connected graphs~$H$.

\begin{lem} \label{lem:Ckscaf}
Let $H$ be a connected graph.
Then every $\{ C_i: i \ge 3 \}$-scaffolding for~$H$ has at least $|H|-1$ edges. 
Moreover, $\tilde{r}( C_{k}, H ) \ge |H| + e(H) -1$ for all $k \ge 3$.
\end{lem}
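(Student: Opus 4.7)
The plan is to unpack the definition of an $\F$-scaffolding for $\F = \{C_i : i \ge 3\}$ and reduce the statement to the standard fact that a tree on $n$ vertices has $n-1$ edges. The first observation is that a graph is $\{C_i : i \ge 3\}$-free exactly when it is acyclic, so every $\F$-scaffolding $R$ for $H$ is a forest. Second, for a non-edge $e = uv \notin E(R)$, the graph $R + e$ fails to be $\F$-free iff it contains a cycle iff there is already a $u$--$v$ path in $R$, i.e.\ iff $u$ and $v$ lie in the same connected component of $R$. So an edge is $(R,\F)$-forceable precisely when its endpoints lie in a common tree component of $R$.

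Now let $H'$ be an $(R, \F)$-forced copy of $H$. By the above, for every edge $xy \in E(H')$ the vertices $x$ and $y$ lie in the same component of $R$. Since $H$ is connected, so is $H'$, and a simple induction along a walk of $H'$ that touches every vertex shows that all of $V(H')$ lies in a single component $T$ of $R$. Because $T$ is a tree, we have $e(R) \ge e(T) = |V(T)| - 1 \ge |V(H')| - 1 = |H| - 1$, as claimed.

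For the second assertion, apply Proposition~\ref{prop:blocking} with $G = C_k$ and $\F = \{C_i : i \ge 3\}$; since $k \ge 3$ we have $G \in \F$, and we have just shown that any $\F$-scaffolding for $H$ has at least $m := |H| - 1$ edges. Proposition~\ref{prop:blocking} then gives $\tilde{r}(C_k, H) \ge m + e(H) = |H| + e(H) - 1$.

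I do not expect any real obstacle: the entire content of the lemma is the translation ``$\F$-free = forest'' together with ``forceable = same component.'' The only minor care needed is in the induction step confining $H'$ to a single tree of $R$, which uses connectedness of $H$ in an essential way, and is the reason the ``$|H|-1$'' bound cannot be improved for disconnected $H$.
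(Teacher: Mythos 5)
Your proof is correct and takes essentially the same approach as the paper's: observe that a $\{C_i : i\ge 3\}$-forceable edge must have both endpoints in a common component of $R$, use connectedness of $H$ to conclude the forced copy of $H$ lies in a single tree component $T$, and then count $e(R) \ge e(T) = |V(T)|-1 \ge |H|-1$ before invoking Proposition~\ref{prop:blocking}. The only cosmetic difference is that the paper first passes to an edge-minimal scaffolding to argue $R$ itself is connected, whereas you bound $e(R)$ directly via the component $T$; both are equally valid.
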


\begin{proof}
Let $R$ be a $\{ C_k \}$-scaffolding for~$H$ with $e(R)$ minimal.
Note that each $(R,\{C_{k}\})$-forceable edge must lie entirely in a component of~$R$.
Since $H$ is connected, $R$ is connected and $|R| \ge |H|$.
Hence, $e(R) \ge  |H| - 1$. 

By Proposition~\ref{prop:blocking}, $\tilde{r}( C_{k}, H ) \ge |H| + e(H) -1$.
\end{proof}

To prove Theorem~\ref{thm:lowerbound}, we set $G=P_{k+1}$ and $\F=\{P_{k+1}\}\cup\{C_{i}:i\ge3\}$.
Thus an $\F$-free graph is a forest whose components have diameter less than~$k$.\COMMENT{Was "a forest with diameter less than $k$", but this would have to be a spanning tree. -JAL 16/08} Lemma~\ref{lem:forestscaffold} gives a lower bound on the number of edges in an $\F$-scaffolding for $H$. 
%Theorem~\ref{thm:lowerbound} then follows immediately from Lemma~\ref{lem:forestscaffold} and Proposition~\ref{prop:blocking}.

Note that replacing $\F$ by $\{P_{k+1}\}$ and attempting a similar proof yields a worse lower bound in some cases. For example, taking $H=P_{2k+1}$ with $k\ge 3$, if Painter follows the $\{P_{k+1}\}$-blocking strategy then Builder can win in $3k$ moves by first constructing a red~$C_k$.\COMMENT{Note that I don't think the choice of $\F$ matters if e.g. $H$ is a clique. -JAL 30/09}

We will see in the proof of Lemma~\ref{lem:forestscaffold} that if $R$ is a red $\F$-free graph with no isolated vertices, and $X\subseteq V(R)$ is the set of endpoints of $P_k$'s in $R$, then Builder may force at most $\Delta(H)(|R|+|X|)$ edges of $H$ using $R$. It will therefore be very useful to bound $|R|+|X|$ above in terms of $e(R)$, first in the special case where $R$ is a tree (see Lemma~\ref{lem:treescaffold}) and then in general (see Lemma~\ref{lem:forestscaffold1}).\COMMENT{This reference was previously incorrect. -JAL 15/10}

\begin{lem}
\label{lem:treescaffold}Let $k,m\in\N$ with $k\ge 2$. Let $R$ be
a $P_{k+1}$-free tree with $m$ edges. Let $X$ be the
set of endpoints of $P_{k}$'s in $R$.
If $X \ne \emptyset$, then $|R|+|X|\le2m-k+4$.
%Moreover, $|R|+|X|\le\max\{2m-k+4,2m\}$.
%Moreover, if $X\ne\emptyset$, then $|R|+|X|\le2m-k+4$.
\end{lem}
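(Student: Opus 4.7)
The plan is to rewrite the desired bound using the fact that $R$ is a tree, so $|V(R)| = m + 1$. The inequality $|R| + |X| \le 2m - k + 4$ is then equivalent to $|X| \le m - k + 3$; in other words, it suffices to exhibit at least $k - 2$ vertices of $R$ that do not belong to $X$. Note that since $R$ is $P_{k+1}$-free and $X \ne \emptyset$, the diameter of $R$ equals $k - 1$, and $X$ is precisely the set of vertices of $R$ whose eccentricity equals $k - 1$.

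Fix a longest path $P = v_1 v_2 \ldots v_k$ in $R$, which exists because $X \ne \emptyset$. The main claim is that no interior vertex $v_i$ with $2 \le i \le k - 1$ lies in $X$, which will exhibit the required $k - 2$ vertices outside $X$. Suppose for a contradiction that some such $v_i$ has a vertex $u \ne v_i$ with $d_R(u, v_i) = k - 1$. Since $R$ is a tree, removing $v_i$ partitions $V(R) \setminus \{v_i\}$ into subtree components, and $v_1$, $v_k$ lie in distinct components. The component containing $u$ therefore differs from the component of at least one of $v_1, v_k$; call this endpoint $v_j$, where $j \in \{1, k\}$. The unique $u$--$v_j$ path in $R$ must traverse $v_i$, so $d_R(u, v_j) = (k - 1) + |j - i| \ge k$ (using $2 \le i \le k - 1$ and $j \in \{1, k\}$), contradicting $\diam(R) = k - 1$.

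The claim therefore gives $\{v_2, \dots, v_{k-1}\} \cap X = \emptyset$, so $|X| \le |V(R)| - (k - 2) = m - k + 3$, as required. I do not foresee any serious obstacle: the argument is driven entirely by the rigidity of the diameter-path structure in a tree, and no genuine case analysis is needed beyond the single contradiction above. The edge case $k = 2$ (where there are no interior vertices) reduces to the trivial bound $|X| \le |V(R)|$, which matches the inequality with equality for, e.g., a single edge.
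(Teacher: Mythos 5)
Your proof is correct, but it takes a genuinely different route from the paper's. The paper argues locally: it shows that every $x\in X$ must be a leaf of $R$, since if $x$ had a second neighbour $z$ then, taking a $P_k$ ending at $x$, the edge $xz$ would close either a cycle or a $P_{k+1}$ in $R$. Combined with the observation that $R$ contains a $P_k$ and hence at least $k-2$ vertices of degree greater than~$1$, the bound $|X|\le|R|-(k-2)$ follows immediately. You instead reinterpret $X$ as the set of vertices of eccentricity exactly $k-1$ (valid precisely because $R$ is a $P_{k+1}$-free tree), fix one diameter path $v_1\dots v_k$, and show by a component-splitting distance argument that no interior vertex $v_i$ can attain eccentricity $k-1$, since a far vertex $u$ together with one endpoint $v_j$ would give $d_R(u,v_j)=(k-1)+|j-i|\ge k$. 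Both routes produce the same $k-2$ vertices excluded from $X$ and hence the same bound; the paper's version is slightly more economical and establishes the stronger structural fact that $X$ is contained in the leaf set, whereas your version is anchored to one specific longest path and requires a little more bookkeeping with components, but each is a complete and valid proof.
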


\begin{proof}
We claim that if $x\in X$, then $x$ is a leaf of $R$. Indeed, let
$P$ be a $P_{k}$ with one endpoint equal to $x$. Let $y\in V(P)$
be the neighbour of $x$ in $P$, and suppose $xz\in E(R)$ for some
$z\ne y$. Then either $z\in V(P)$ and $xzPx$ is a cycle in $R$,
or $z\notin V(P)$ and $Pxz$ is a $P_{k+1}$ in $R$ -- both are
contradictions. Hence if $x\in X$, then $x$ is a leaf. But since $X\ne\emptyset$,
$R$ contains a $P_{k}$ and hence at least $k-2$ vertices of degree
greater than 1.\COMMENT{This grammar is correct. -JAL 06/11} Hence
\[
|R|+|X|\le|R|+|R|-(k-2)=2m-k+4,
\]
and the proposition follows.
%If $X=\emptyset$, then $|R|+|X|=|R|= m+1\le2m$ and we are done.
%We may therefore assume that $X\ne\emptyset$. 
\end{proof}

\begin{lem}
\label{lem:forestscaffold1}
Let $k,m\in\N$ with $k\ge 2$. Let $R$ be a $P_{k+1}$-free forest with $m$ edges and no isolated vertices. Let $X$ be the set of all
endpoints of $P_{k}$'s in $R$.
Then
\[
|R|+|X|\le\begin{cases}
4m & \textnormal{if } k=2,\\
5m/2 & \textnormal{if } k=3,\\
2m & \textnormal{if } k\ge4.
\end{cases}
\]
Moreover, if $k \ge 4$ and there exists an edge $e$ such that $R + e$ contains a $P_{k+1}$, then $|R|+|X|\le
2m - k+4$.
\end{lem}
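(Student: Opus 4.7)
The plan is to split on $k$. The small cases are by direct structural description: for $k=2$, a $P_3$-free forest with no isolated vertices is a perfect matching on $2m$ vertices, so $|R|=|X|=2m$; for $k=3$, each component of $R$ is a star $K_{1,s}$ with $s\ge 1$, and writing $n_s$ for the number of such components one has $m = \sum_{s\ge 1} s\,n_s$, $|R| = \sum_{s\ge 1} (s+1)n_s$, and $|X| = \sum_{s\ge 2} s\,n_s$ (since $K_{1,1}$ contains no $P_3$). A coefficient-by-coefficient comparison then gives $|R|+|X| \le 5m/2$, the binding case being $s=2$.

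For $k\ge 4$, I work component by component using Lemma~\ref{lem:treescaffold}. Let $\mathcal{C}_1$ be the set of components of $R$ that contain a $P_k$ and $\mathcal{C}_0$ the rest. If $C\in\mathcal{C}_1$, Lemma~\ref{lem:treescaffold} yields $|C|+|X\cap V(C)| \le 2m_C-k+4$; if $C\in\mathcal{C}_0$, then $X\cap V(C)=\emptyset$ and $|C|+|X\cap V(C)| = m_C+1$. Both are bounded by $2m_C$ (using $k\ge 4$ and the fact that $m_C \ge 1$, since $R$ has no isolated vertices), so summing over components gives $|R|+|X| \le 2m$.

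For the moreover clause, the same per-component summation shows that whenever $\mathcal{C}_1\ne\emptyset$ the improved bound $|R|+|X|\le 2m-k+4$ drops out immediately, since one component then saves $k-4$ while the others are still controlled by $2m_C$. So the substantive task is to prove that when $\mathcal{C}_1=\emptyset$ one still has $m(R)-c(R)\ge k-4$, where $c(R)$ is the number of components of $R$. I would fix an edge $e$ with $R+e\supseteq P_{k+1}$ and case-split on where $e$ sits relative to $R$: (i) if $e$ attaches a new vertex, then deleting $e$ from the $P_{k+1}$ in $R+e$ leaves a $P_k$ inside some component of $R$, so $\mathcal{C}_1\ne\emptyset$, a contradiction; (ii) if $e$ joins two components $C_1,C_2$, then the $P_{k+1}$ splits at $e$ into a $P_a\subseteq C_1$ and a $P_b\subseteq C_2$ with $a+b=k+1$ and $a,b\ge 2$ (else one component would already contain $P_k$), giving $m(C_1)+m(C_2)\ge k-1$ and hence $m(R)-c(R)\ge k-3$; (iii) if $e$ lies inside a single component $C$, then the $k+1$ distinct vertices of the $P_{k+1}$ all live in $C$, so $m_C\ge k$ and again $m(R)-c(R)\ge k-1$. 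The subtle point is case (iii): one needs the two subpaths of the $P_{k+1}$ on either side of $e$ to be vertex-disjoint, which they are because they are segments of the same path $P_{k+1}$. Re-summing the per-component bounds with this extra slack then yields $|R|+|X|\le 2m-k+4$.
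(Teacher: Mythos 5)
Your proof is correct and follows essentially the same approach as the paper: a per-component application of Lemma~\ref{lem:treescaffold} for $k\ge 4$, and, for the moreover clause, the same observation that an internal edge $e$ forces two vertex-disjoint paths of combined length at least $k-1$ inside $R$, which bounds the number of components in terms of $m-k$. The one small variation is at $k=3$, where you characterize $P_4$-free components directly as stars $K_{1,s}$ and compare coefficients, whereas the paper again invokes Lemma~\ref{lem:treescaffold} with a separate count of single-edge components; both land on the same extremal case ($s=2$) and are equally valid.
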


\begin{proof}
Let $R_{1},\dots,R_{r}$ be the components of $R$. Let $m_{i}=e(R_{i})$
and $X_{i}=X\cap V(R_{i})$ for all $1 \le i \le r$.
If $k=2$, then $R$ is a disjoint union of $m$ edges and the result is immediate. 
%\begin{align*}
%|R|+|X|=\sum_{i=1}^{r}(|R_{i}|+|X_{i}|)\le \sum_{i=1}^{r}( 2 m_{i}+2 ) = 2( m + r)  \le 4m.
%\end{align*}

Suppose $k=3$. Without loss of generality, let $R_{1},\dots,R_{r'}$
be those components of $R$ which consist of a single edge. (Note
that we may have $r'=0$.)
Then $m = r' + \sum_{i =r'+1 }^{r} m_i$ and $r - r' \le m/2$.
Then by Lemma~\ref{lem:treescaffold}
we have
\begin{align*}
|R|+|X| & =\sum_{i=1}^{r'}|R_{i}|+\sum_{i=r'+1}^{r}(|R_{i}|+|X_{i}|)
  \le2r'+ \sum_{i=r'+1}^{r} ( 2 m_{i} + 1) \\
 & =2m+r-r'\le 5m/2
\end{align*}
and so the result follows.

Finally, suppose $k \ge 4$.
Let $q$ be the number of components of $R$ containing a~$P_{k}$.
Without loss of generality suppose that $R_1, \dots, R_q$ are the components of $R$ which contain a $P_k$.
For $ q< i \le r$, we have $|R_{i}|+|X_{i}| = |R_i| = m_i+1 \le 2 m_i$. 
 Then by Lemma~\ref{lem:treescaffold} we
have 
\begin{equation}
|R|+|X|=\sum_{i=1}^{r}(|R_{i}|+|X_{i}|)\le \sum_{i=1}^{q}( 2 m_{i}-k+4 ) +\sum_{i=q+1}^r (2m_i) = 2m-q(k-4).\label{eq:forestscaffold1}
\end{equation}
Suppose that there exists an edge $e$ such that $R + e$ contains a~$P_{k+1}$.
If $X \ne \emptyset$, then $q \ge 1$ and so $|R|+|X| \le 2m - k+4$ by (\ref{eq:forestscaffold1}).
Hence we may assume that $X = \emptyset$, and so $e$ is an edge between two vertices of $R$.
It follows that $R$ contains two vertex-disjoint paths of combined length at least $k-1$, and hence that
\[|R|+|X| = |R| = m + r \le m + (m - k + 3) < 2m - k + 4,\]
as desired. The first inequality follows since all edges in a given path must lie in the same component of $R$.
\end{proof}

\begin{lem} \label{lem:forestscaffold}
Let $k,\ell\in\N$ with $k\ge2$.
Let $H$ be a graph with $\ell$ edges and let $\Delta=\Delta(H)$.
Let $\F=\{P_{k+1}\}\cup\{C_{i}:i\ge3\}$.
Suppose $R$ is an $\F$-scaffolding for $H$.
Then, we have
\begin{align*}
e(R) \ge \begin{cases}
\ell / (2\Delta) & \textnormal{if } k=2,\\
4 \ell /( 5\Delta) & \textnormal{if } k=3,\\
\ell / \Delta  & \textnormal{if } k\ge4.
\end{cases}
\end{align*}
Moreover, if $H$ is connected and $k \ge 4$ then $e(R) \ge \min \left\{ \frac{\ell }{\Delta} + \frac{k}2-2, |H|-1 \right\}$.
%\begin{align*}
%e(R) \ge \begin{cases}
%\frac25\left\lceil \frac{2 \ell +1}{d}  \right\rceil& \textnormal{if } k=3,\\
%\min \left\{ \frac{\ell }{d} + \frac{k}2-2, |H|-1 \right\}& \textnormal{if } k\ge4.
%\end{cases}
%\end{align*}
\end{lem}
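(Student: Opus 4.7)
The plan is to relate the structure of an $(R,\F)$-forced copy $H'\subseteq K_\N-R$ of $H$ to $|R|$ and $|X|$, and then invoke Lemma~\ref{lem:forestscaffold1}. Since $R$ is $\F$-free, $R$ is a $P_{k+1}$-free forest, so any forceable edge $e=uv$ either (a) creates a cycle in $R+e$, forcing both $u,v$ to lie in the same component of $R$, or (b) creates a new $P_{k+1}$ in $R+e$. In case (b), the new $P_{k+1}$ must traverse $e$, and inspecting the two subpaths ending at $u$ and $v$ shows that if one endpoint of $e$, say $u$, lies outside $V(R)$, then $v$ must be the end of a $P_k$ in $R$, i.e.\ $v\in X$. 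In particular, no edge of $H'$ has both endpoints outside $V(R)$, and any edge of $H'$ with an endpoint outside $V(R)$ has its other endpoint in $X$.

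Set $A=V(H')\cap V(R)$ and $B=V(H')\setminus V(R)$, and write $m=e(R)$. By the observation above, $H'[B]$ is empty and every $b\in B$ satisfies $N_{H'}(b)\subseteq X$, so $e_{H'}(A,B)\le \Delta|X|$. Since $\Delta(H')=\Delta$, also $\sum_{v\in A}\deg_{H'}(v)\le \Delta|A|\le \Delta|R|$. Summing these contributions,
\[
2\ell \;=\; \sum_{v\in A}\deg_{H'}(v) + e_{H'}(A,B) \;\le\; \Delta(|R|+|X|),
\]
so $|R|+|X|\ge 2\ell/\Delta$. Combined with the three bounds on $|R|+|X|$ from Lemma~\ref{lem:forestscaffold1}, this immediately yields $m\ge \ell/(2\Delta)$ when $k=2$, $m\ge 4\ell/(5\Delta)$ when $k=3$, and $m\ge \ell/\Delta$ when $k\ge 4$, giving the first assertion.

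For the moreover clause, assume $k\ge 4$ and that $H$ is connected. If there exists some edge $e\in E(H')$ with $R+e$ containing a $P_{k+1}$, the moreover part of Lemma~\ref{lem:forestscaffold1} gives $|R|+|X|\le 2m-k+4$, and combining with $|R|+|X|\ge 2\ell/\Delta$ yields $m\ge \ell/\Delta+k/2-2$. Otherwise, every edge of $H'$ creates only a cycle in $R+e$, so both its endpoints lie in the same component of $R$; since $H$ (hence $H'$) is connected, all of $V(H')$ then sits inside a single tree component $R^*$ of $R$, giving $|H|\le|R^*|=e(R^*)+1\le m+1$ and hence $m\ge |H|-1$. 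In either case, $m\ge\min\{\ell/\Delta+k/2-2,\;|H|-1\}$, as required.

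The main conceptual step is the structural characterisation of forceable edges in the first paragraph; once it is in place, everything reduces to a routine degree-sum inequality combined with Lemma~\ref{lem:forestscaffold1}, with a clean case split for the connected moreover clause.
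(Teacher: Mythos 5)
Your proof is correct and follows essentially the same approach as the paper: reduce to the inequality $2\ell\le\Delta(|R|+|X|)$ and apply Lemma~\ref{lem:forestscaffold1}. The paper derives this inequality by observing that every forceable edge is either incident to $X$ or internal to $Y=V(R)\setminus X$, then counting edges of $H'$ of each type ($\le\Delta|X|$ and $\le\Delta|Y|/2$ respectively); you instead split $V(H')$ into $A=V(H')\cap V(R)$ and $B=V(H')\setminus V(R)$ and run a degree-sum count, but the two accountings give the identical bound. In the \emph{moreover} clause your case split is on whether some edge of $H'$ (rather than any edge at all) creates a $P_{k+1}$ in $R+e$; in the residual case you argue directly that all of $V(H')$ lies in a single tree component of $R$, which is precisely the content of Lemma~\ref{lem:Ckscaf} that the paper invokes (after first deducing $X=\emptyset$). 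Both routes are sound and give the same conclusion.
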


\begin{proof}
Let $m = e(R)$.
Note that $R$ is a $P_{k+1}$-free forest with $m$ edges and no isolated vertices.
Let $X$ be the set of endpoints of $P_{k}$'s in $R$ and let $Y=V(R)\setminus X$.

We first claim that any $(R,\F)$-forceable edge is either incident to $X$ or internal to $Y$. Suppose not.
Then there exist $y\in Y$ and $z\notin V(R)$ such that $yz$ is a forceable edge.
Let $F\in\F$ be such that $F\subseteq R+e$. Note
that $e\in E(F)$, since $R$ is $\F$-free. Since $d_{R+e}(z)=1$, we
have $F=P_{k+1}$. But then $y$ is an endpoint of a $P_{k}$ in $R$,
contradicting $y\in Y$.

Let $H'$ be a forced copy of $H$. Then $H'$ contains at most $\Delta|X|$
edges incident to $X$, and at most $\Delta|Y|/2$ edges internal to $Y$.
All edges of $H'$ are forceable, so it follows that 
\begin{align}
\ell = e(H') \le \Delta|X|+\frac{\Delta|Y|}{2}= \frac{\Delta(|R|+|X|)}{2}. \label{RX3}
\end{align}
Lemma~\ref{lem:forestscaffold1} and (\ref{RX3}) imply the lemma holds unless $k\ge 4$ and $H$ is connected.

Now suppose $H$ is connected and $k \ge 4$. 
If there exists an edge $e$ such that $R + e$ contains a $P_{k+1}$, then $|R|+|X|\le
2m - k+4$ by Lemma~\ref{lem:forestscaffold1}.
Hence, \eqref{RX3} implies that $m \ge \frac{\ell }{\Delta} + \frac{k}2-2$.
Therefore, we may assume that no such edge exists, and in particular that $X = \emptyset$.
This implies that $R$ is a $\{C_{i}:i\ge3\}$-scaffolding for $H$.
Lemma~\ref{lem:Ckscaf} implies that $m \ge |H| -1$ as required.

\end{proof}

Theorem~\ref{thm:lowerbound} follows immediately from Proposition~\ref{prop:blocking} and Lemma~\ref{lem:forestscaffold}.

We now bound $\tilde{r}(P_{4},P_{\ell+1})$ from below.

\begin{lem} \label{lem:p4forestscaffold}
Let $\ell \in \N$ with $\ell \ge 3$. Then we have $\tilde{r}(P_4, P_{\ell+1}) \ge (7\ell+2)/5$.
\end{lem}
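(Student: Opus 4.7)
The plan is to apply Proposition~\ref{prop:blocking} with $\F = \{P_4\} \cup \{C_i : i \ge 3\}$ and to strengthen the $k = 3$, $\Delta = 2$ case of Lemma~\ref{lem:forestscaffold} by an additive $1$, exploiting that $H = P_{\ell+1}$ is a path. By Proposition~\ref{prop:blocking}, it suffices to show that every $\F$-scaffolding $R$ for $P_{\ell+1}$ has at least $(2\ell + 2)/5$ edges, from which $\tilde{r}(P_4, P_{\ell+1}) \ge e(R) + \ell \ge (7\ell+2)/5$ follows immediately.

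Fix such a scaffolding $R$ with $m$ edges, let $H' \cong P_{\ell+1}$ be an $(R, \F)$-forced copy, and let $X$ denote the set of endpoints of $P_3$'s in $R$ and $Y = V(R) \setminus X$. Following the proof of Lemma~\ref{lem:forestscaffold}, every forceable edge is either incident to $X$ or internal to $Y$, so the edges of $H'$ split into those incident to $X$ (at most $2|X|$ by the degree-sum bound, since each $X$-vertex has $H'$-degree at most $2$) and those with both endpoints in $Y$.

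The key refinement is on the number of $Y$-internal edges. Because $H'$ is a path, the induced subgraph $H'[Y]$ is a subgraph of a path and hence a forest, so it has at most $|V(H')\cap Y| - 1 \le |Y| - 1$ edges; this is the $-1$ improvement over the $|Y|$ bound used in Lemma~\ref{lem:forestscaffold}. Since $R$ is nonempty and each component of $R$ is a star $K_{1,s}$ with $s \ge 1$, which contributes at least one vertex to $Y$ (the center when $s \ge 2$, or both vertices when $s = 1$), we have $|Y| \ge 1$. Combining gives $\ell \le 2|X| + |Y| - 1 = (|R|+|X|) - 1 \le 5m/2 - 1$ by Lemma~\ref{lem:forestscaffold1}, so $m \ge (2\ell + 2)/5$. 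The only substantive point is recognising that $H'[Y]$ is a forest, which is immediate as $H'$ itself is a tree.
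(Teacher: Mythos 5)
Your proof is correct and follows essentially the same route as the paper's: both argue via Proposition~\ref{prop:blocking} with $\F = \{P_4\}\cup\{C_i:i\ge 3\}$, decompose $V(R)$ into $X$ and $Y$, use that $H'[Y]$ is a forest to get $e_{H'}(Y)\le|Y|-1$ (with $|Y|\ge1$ needed to make this legitimate), and finish with $\ell\le 2|X|+|Y|-1=|R|+|X|-1\le 5m/2-1$ from Lemma~\ref{lem:forestscaffold1}. The only cosmetic difference is how you establish $Y\ne\emptyset$: you invoke the star structure of $P_4$-free tree components, whereas the paper observes that the centre of any $P_3$ in $R$ cannot lie in $X$.
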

%
%Let $\ell\in\N$ with $\ell\ge3$.
%Let $\F=\{P_{4}\}\cup\{C_{i}:i\ge3\}$.
%Suppose $R$ is an $\F$-scaffolding for $P_{\ell+1}$.
%Then, we have $e(R)\ge 2(\ell +1) /5$.
%Moreover, $\tilde{r}(P_{4},P_{\ell+1}) \ge (7 \ell +2)/5$.

\begin{proof}
Let $\F=\{P_{4}\}\cup\{C_{i}:i\ge3\}$. Let $R$ be an $\F$-scaffolding for $P_{\ell+1}$. 
Let $X$ be the set of endpoints of $P_{3}$'s in $R$, and let $Y=V(R)\setminus X$.
By Lemma~\ref{lem:forestscaffold1} and Proposition~\ref{prop:blocking}, to prove the lemma it suffices to show that $|R| + |X| \ge \ell +1$.

Let $H$ be a forced copy of $P_{\ell+1}$.
Note that any $(R,\F)$-forceable edge is either incident to $X$ or internal to $Y$. 
Note also that $Y \ne \emptyset$. Indeed, if $X = \emptyset$ then this is immediate. If $X \ne \emptyset$, then $R$ is a $P_4$-free forest containing a $P_3$. The central vertex of this $P_3$ cannot be an element of $X$, and is therefore an element of $Y$.

Since $\Delta(H) = 2$, $H$ contains at most $2|X|$ edges incident to $X$. Moreover, since $H$ is a path, $H[Y]$ is a forest and so $e_H(Y) \le |Y| - 1$. It follows that
\[\ell \le 2|X| + |Y| - 1 = |R| + |X| - 1,\]
and hence $|R| + |X| \ge \ell + 1$ as desired.

%If all edges of $H$ are internal to $Y$, then $\ell + 1 = |H| \le |Y| \le |R|+|X|$ as desired.
%We may therefore assume that $X \ne \emptyset$.
%Moreover, this implies that $Y \ne \emptyset$. (Indeed, since $R$ is a $P_4$-free forest only leaves of $R$ can be elements of $X$.
%Since $X \ne \emptyset$, $R$ contains a $P_3$ and hence a non-leaf.)
%Since $H$ is a path, there are at most $|Y|-1$ edges of $H$ internal to $Y$ and so $\ell \le 2|X| + |Y| - 1 = |R| + |X| - 1$. Hence $|R|+|X| \ge \ell +1$ in all cases, as desired.
%If $H$ consists entirely of edges incident to $X$, then $\ell = e(H) \le 2|X| \le |R| +|X| - 1$ (as $Y \ne \emptyset$). Hence $|R|+|X| \ge \ell +1$.
%Finally, suppose that $H$ contains an edge between $X$ and $Y$. Then since $H$ is a path, there are at most $|Y|-1$ edges of $H$ internal to $Y$ and so $\ell \le 2|X| + |Y| - 1 = |R| + |X| - 1$. Hence $|R|+|X| \ge \ell +1$ in all cases, as desired.
\end{proof}

%%%%%%%%%%%%%%%%%%%%%%%%%%%%%%%%%%%%%%%%%%%%%%%%%%%%%%%%%%%%%%%%%%%%%%%%%%

\section{\label{sec:exactpathcalcs}Determining $\tilde{r}(P_{3},P_{\ell+1})$
for $\ell\ge2$}

Theorem~\ref{thm:lowerbound} implies that $\tilde{r}(P_{3},P_{\ell+1}) \ge \lceil5\ell/4\rceil$
for $\ell\ge2$.
To bound $\tilde{r}(P_{3},P_{\ell+1})$ above, we shall present a strategy
for Builder. In the discussion that follows, we assume for clarity
that Painter will never voluntarily lose the $\tilde{r}(P_{3},P_{\ell+1})$-game.

Builder will use the threat of a red $P_3$ to force a blue $P_{\ell+1}$.
First, Builder will use Lemma~\ref{lem:3-buildingblock} to construct a blue path $P$ with one endpoint incident to a red edge.
Builder will then use a procedure outlined in Lemma~\ref{lem:3-mainwork} to efficiently extend $P$ until it has length between $\ell - 4$ and $\ell$.
Finally, Builder will carefully extend $P$ into a blue $P_{\ell+1}$, yielding a tight upper bound for $\tilde{r}(P_3, P_{\ell+1})$ (see Theorem~\ref{thm:P3-Pl-exact}).

\begin{lem}
\label{lem:3-buildingblock}Let $q\in\N$ with $q\ge5$.
Builder can force one of the following structures independent of Painter's choices:
\begin{enumerate}
\item a red $P_{3}$ in at most $q-1$ rounds.
\item a blue $P_{q}$ in $q-1$ rounds.
\item a blue $P_{t}$ with one endpoint incident to a red edge
in $t$ rounds for some $4\le t\le q-1$.
\end{enumerate}
\end{lem}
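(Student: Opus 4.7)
I would design a two-phase strategy for Builder. The \emph{linear extension phase} works as follows: Builder uses new vertices $v_1, v_2, \ldots$, and in round $i$ plays the edge $v_i v_{i+1}$ as long as Painter has coloured every previous edge blue. If Painter answers blue throughout rounds $1, \ldots, q-1$, the resulting graph is a blue $P_q$ on $v_1, \ldots, v_q$ and we obtain conclusion~(ii). Otherwise let $k \in \{1, \ldots, q-1\}$ be the first round at which Painter colours an edge red; then the configuration is a blue $P_k$ on $v_1, \ldots, v_k$ together with the red edge $v_k v_{k+1}$. If $k \ge 4$ this already satisfies conclusion~(iii) with $t = k \in [4, q-1]$, so Builder stops after $k$ rounds.

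In the remaining cases $k \in \{1,2,3\}$ Builder enters a \emph{recovery phase} of exactly $4 - k$ further rounds. The guiding principle is that each recovery round should play an edge such that Painter's red answer closes a red $P_3$ with some previously played red edge (yielding (i)), while Painter's blue answer enlarges the blue structure while preserving or creating a red-incident endpoint. For $k=3$ Builder plays the cross edge $v_4 v_1$: a red answer completes the red $P_3$ on $v_1, v_4, v_3$, while a blue answer produces the blue $P_4 = v_4 v_1 v_2 v_3$ with endpoint $v_3$ still incident to the red edge $v_3 v_4$, giving (iii) with $t=4$. For $k=2$ Builder first plays the cross edge $v_3 v_1$: red gives the red $P_3$ on $v_1, v_3, v_2$, and blue yields a blue $P_3 = v_2 v_1 v_3$ whose two endpoints are joined by the red edge $v_2 v_3$; Builder then plays an extension edge $v_2 v_4$, which by the same dichotomy either gives (i) or enlarges the blue path to a $P_4$ whose endpoint $v_3$ is still incident to the red edge $v_2 v_3$, again producing (iii) with $t=4$. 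For $k=1$ Builder first plays $v_2 v_3$: a red answer gives the red $P_3$ on $v_1, v_2, v_3$, and a blue answer lands us in the $k=2$ configuration, from which the previous two recovery rounds finish.

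The main obstacle is the recovery-phase analysis: at each of the three intermediate configurations I must verify that the prescribed edge really does have the two promised properties. This comes down to careful bookkeeping of which vertex is the red marker's dangling end and which is the far endpoint of the current blue path; with concrete labelings as above it is a short case check. Once those transitions are pinned down, the round count is immediate: any branch that enters the recovery phase ends in $k + (4-k) = 4 \le q-1$ rounds, any branch ending at the first red with $k \ge 4$ uses $k \le q-1$ rounds, and the all-blue branch uses exactly $q-1$ rounds, matching the three conclusions of the lemma.
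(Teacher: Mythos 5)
Your proof is correct and follows essentially the same strategy as the paper: a greedy linear extension followed by a short recovery phase of $4-k$ rounds when the first red appears at $k\le 3$. The only difference is tactical -- for $k\in\{1,2\}$ the paper detours around the red edge using a new vertex $v$ (choosing $x_kv$, $vx_{k+1}$, and $x_{k+1}w$ when $k=1$), whereas you uniformly reuse the fold-back move $v_{k+1}v_1$ together with an extension and a reduction from $k=1$ to $k=2$; both recoveries cost $4-k$ rounds and are entirely interchangeable.
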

\begin{proof}
Builder first chooses an arbitrary vertex $x_1$, then proceeds as follows.
Suppose that Builder has already obtained a blue path $x_1\dots x_i$ in $i-1$ rounds for some $1 \le i < q$. 
Builder then chooses the edge $x_ix_{i+1}$, where $x_{i+1}$ is a new vertex. If Painter colours $x_ix_{i+1}$ blue, we have obtained a 
blue path $x_1\dots x_{i+1}$ in $i$ rounds, and so if $i+1 < q$ we may repeat the process. If Painter colours 
all such edges blue, we will obtain a blue path $x_1\dots x_q$ in $q-1$ rounds and
achieve (ii). Suppose instead that for some $1 \le i \le q-1$, within $i$ rounds we obtain 
a path $x_1\dots x_{i+1}$ such that $x_1\dots x_i$ is blue and $x_ix_{i+1}$ is red.
If $i\ge4$ then we have achieved (iii), so suppose in addition $i\le3$.

First suppose $i\in\{1,2\}$. In this case, Builder chooses the two
edges $x_{i}v$ and $vx_{i+1}$ where $v$ is a new vertex. If $i=1$,
Builder also chooses the edge $x_{i+1}w$ where $w$ is a new vertex.
If Painter colours $x_{i}v$, $vx_{i+1}$ or $x_{i+1}w$ red, then
$x_{i+1}x_{i}v$, $vx_{i+1}x_{i}$ or $x_{i}x_{i+1}w$ respectively
is a red $P_{3}$ and we have achieved (i). Otherwise, we have achieved
(iii). Indeed, if $i=1$ then $x_{1}vx_{2}w$ is a blue $P_{4}$ constructed
in 4 rounds with $x_{1}$ incident to the red edge $x_{1}x_{2}$, and if $i=2$
then $x_{1}x_{2}vx_{3}$ is a blue $P_{4}$ constructed in 4 rounds
with $x_{3}$ incident to the red edge $x_{3}x_{2}$.

Finally, suppose $i=3$. Then Builder chooses the edge $x_{4}x_{1}$.
If Painter colours the edge red, then $x_{3}x_{4}x_{1}$ is a red
$P_{3}$ and we have achieved (i), so suppose Painter colours the
edge blue. Then $x_{4}x_{1}x_{2}x_{3}$ is a blue $P_{4}$ constructed
in 4 rounds with $x_{3}$ incident to the red edge $x_{3}x_{4}$, so we have achieved
(iii).\end{proof}

\begin{lem}
\label{lem:3-mainwork}Let $\ell\in\N$ with $\ell\ge4$. 
Builder can force one of the following structures independent of Painter's choices:
\begin{enumerate}
\item a red $P_{3}$ in at most $5\ell/4-1$ rounds.
\item a blue $P_{\ell+1}$ in at most $5\ell/4-1$ rounds.
\item a blue $P_{t}$ with one endpoint incident to a red edge
in at most $5t/4-1$ rounds for some $\ell-3\le t\le\ell$.
\end{enumerate}
\end{lem}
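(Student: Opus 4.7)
The plan is to iteratively apply Lemma \ref{lem:3-buildingblock} with parameter $q=5$, splicing each resulting ``building block'' onto a maintained blue path $P$ by a single additional edge. Each call to the sublemma yields either a red $P_3$ (in which case conclusion (i) is achieved immediately), a blue $P_5$ with no red-incident endpoint in $4$ rounds (a \emph{case (ii) block}), or a blue $P_4$ with one endpoint incident to a red edge in $4$ rounds (a \emph{case (iii) block}). Builder then plays one further edge between an endpoint of $P$ and an endpoint of the block, and iterates; the goal is to grow $|P|$ until it lands in $[\ell-3,\ell]$ (conclusion (iii)) or exceeds $\ell$ (conclusion (ii)).

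I shall distinguish two states of $P$: state~I if at least one endpoint of $P$ is incident to a red edge, and state~NI otherwise. In state~I, Builder splices at the red-incident endpoint $v$, and Painter must colour the link edge blue --- otherwise the link combined with the existing red edge at $v$ forms a red $P_3$. Each state~I extension therefore costs exactly $5$ rounds and adds $4$ vertices (case (iii) block, preserving state~I) or $5$ vertices (case (ii) block, typically moving to state~NI). In state~NI, Painter may colour the link edge $vx$ red; Builder then plays a second edge from $v$ to the opposite endpoint $x'$ of the block, which Painter must colour blue (else either $xvx'$ is a red $P_3$, or for a case (iii) block $vx'y$ is, where $y$ is the block's pre-existing red neighbour of $x'$). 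This ``detour'' reverses the block's orientation in the splice and transfers red incidence to the far end of the combined path, costing one extra round.

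Next I shall establish the invariant that, after each iteration, Builder has used at most $5|P|/4-1$ rounds in state~I and at most $5|P|/4-2$ rounds in state~NI. This is verified case-by-case over the six possible transitions (state~I with case (ii) or (iii) block; state~NI with case (ii) or (iii) block, link blue or red). Builder stops when either $|P|\in[\ell-3,\ell]$ in state~I (giving conclusion (iii) with $t=|P|$, within the budget $5t/4-1$ by the invariant) or $|P|\ge\ell+1$ (giving conclusion (ii) within $5\ell/4-1$). The residual scenario, $|P|\in[\ell-3,\ell]$ in state~NI, is handled by a single-edge probing phase: Builder plays $vw$ for a new vertex $w$; a red response gives state~I with $|P|$ unchanged and achieves conclusion (iii), while a blue response grows $|P|$ by one but keeps state~NI (iterate, or invoke conclusion (ii) if $|P|$ now equals $\ell+1$). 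The NI invariant is preserved under probes, and a short arithmetic check confirms that every Painter response pattern stays within budget.

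The main obstacle will be the finicky but finite bookkeeping for the two-part invariant across the six transition sub-cases and both probing sub-cases, each with its own slightly different inequality. A subsidiary point is that the very first call to Lemma \ref{lem:3-buildingblock} incurs no splicing round, supplying exactly the initial one-round slack needed to prime the induction and allowing state~NI to be entered with $r\le 5|P|/4-2$ rather than merely $r\le 5|P|/4-1$.
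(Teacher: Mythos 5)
Your proposal is correct, though it takes a genuinely different route from the paper's. The paper also builds up a blue path $P_t$ with a red-incident endpoint $v_1$ and budget $\le 5t/4-1$, but it calls Lemma~\ref{lem:3-buildingblock} with the adaptive parameter $q = \ell-t+1$ at each step. This has two consequences that keep the invariant one-dimensional: (1) if Lemma~\ref{lem:3-buildingblock} returns case (ii), the resulting $P_{\ell-t+1}$ is exactly long enough that a single linking edge finishes the game outright, so the ``state NI'' you track never arises; and (2) the paper links the \emph{non}-red-incident endpoint $v_t$ of $P$ to the \emph{red}-incident endpoint $w_1$ of the new block, so a red link gives a red $P_3$ via $w_1$'s red edge and a blue link leaves $v_1$'s red incidence untouched. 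Your version with fixed $q=5$ trades this adaptivity for mechanical repetition, at the cost of a two-tier invariant ($5|P|/4-1$ for state~I, $5|P|/4-2$ for state~NI), a detour move (one extra round) when a red link is played in state~NI, and a probing phase when $|P|$ enters $[\ell-3,\ell]$ in state~NI. I checked all six extension transitions, the probing-phase arithmetic, and the endgame accounting (when $|P|$ reaches $\ell+1$ the total is still at most $5\ell/4-1$ because the last extension starts from $|P|\le\ell-4$ or from a probe at $|P|\le\ell$), and everything closes. The one place where your write-up is implicit but needs care is that in state~I with a case (iii) block you must link $v$ to the block's \emph{non}-red-incident endpoint so that the block's red-incident endpoint becomes the new end of $P$ and state~I is preserved; similarly in the state~NI detour the first link must be to the non-red-incident endpoint of the block. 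These choices are what make the word ``preserving'' in your transition table true, so they should be stated. Overall your argument is more work than the paper's, but it is sound and has the pedagogical advantage of making explicit why an additive unit of slack must be carried whenever the path temporarily loses its red-incident endpoint.
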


\begin{proof}
Throughout the proof, we assume for clarity that Painter will always avoid (i) and (ii) if possible. 
By Lemma~\ref{lem:3-buildingblock} (taking $q = \ell +1$) we may assume that Builder has constructed 
a blue $P_{t}$, say $v_1 \dots v_t$, which satisfies
\begin{enumerate}
\item[\rm ($\ast$)]  $v_1 \dots v_t$ has one endpoint incident to a red edge $v_1u$, and Builder constructed $v_1\dots v_t$ 
in at most $5t/4-1$ rounds. Moreover, $4 \le t\le\ell$.
\end{enumerate}
Note that $t \le 5t/4-1$ since $t \ge 4$.

If $t \ge \ell -3$, then we have achieved (iii).
Hence, we may assume that $4 \le t  < \ell -3$.
Without loss of generality, let $v_1u$ be a red edge as in ($\ast$).
Builder will extend $v_1\dots v_t$ as follows.
We apply Lemma~\ref{lem:3-buildingblock} with $q=\ell-t+1\ge5$ on a set of new vertices.
We split into cases depending on Painter's choice. 

\medskip\noindent \textbf{Case 1:} Builder obtains a red $P_{3}$ in at most $\ell-t$ rounds, as in Lemma~\ref{lem:3-buildingblock}(i).

In this case, Builder has spent at most $5t/4-1+\ell-t\le5\ell/4-2$ rounds
in total since $t \le \ell-4$, and so we have achieved (i).

\medskip\noindent \textbf{Case 2:} Builder obtains a blue path $w_{1}\dots w_{\ell-t+1}$ in 
$\ell-t$ rounds, as in Lemma~\ref{lem:3-buildingblock}(ii).

In this case, Builder has again spent at most $5\ell/4-2$ rounds in total.
Builder now chooses the edge $w_{1}v_{1}$. If Painter colours it
red, then $w_{1}v_{1}u$ is a red $P_{3}$ and we have achieved (i).
If Painter colours it blue, then $w_{\ell-t+1}\dots w_{1}v_{1}\dots v_{t}$
is a blue $P_{\ell+1}$ and we have achieved (ii).

\medskip\noindent \textbf{Case 3:} Builder obtains a blue path $w_{1}\dots w_{t'}$ and a red edge $w_{1}x$
in at most $t'$ rounds for some $4\le t'\le\ell-t$, as in Lemma~\ref{lem:3-buildingblock}(iii).

In this case, Builder has spent at most
\[
\frac{5t}{4}-1+t' = \frac{5t}{4}+\frac{5t'}{4} - \frac{t'}{4} - 1 \le \frac{5(t+t')}{4}-2\le\frac{5\ell}{4}-2
\]
 rounds in total. Builder now chooses the edge $v_{t}w_{1}$. If Painter
colours it red, then $v_{t}w_{1}x$ is a red $P_{3}$ and we have
achieved (i). 
If Painter colours it blue, then $v_{1}\dots v_{t}w_{1}\dots w_{t'}$ is a blue $P_{t+t'}$ with $v_1$ incident to the red edge $v_1u$.
Moreover, this $P_{t+t'}$ satisfies ($\ast$) with $t+t' >t$.
Hence by iterating the argument above, the result follows.\end{proof}

\begin{thm}
\label{thm:P3-Pl-exact}For all $\ell\ge2$, $\tilde{r}(P_{3},P_{\ell+1})=\lceil5\ell/4\rceil$.\end{thm}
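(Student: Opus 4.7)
The lower bound is immediate from Theorem~\ref{thm:lowerbound} applied with $k = 2$ and $H = P_{\ell+1}$ (so $\Delta = 2$ and $e(H) = \ell$), yielding $\tilde{r}(P_3, P_{\ell+1}) \ge 5\ell/4$ and hence $\ge \ceil{5\ell/4}$.

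For the upper bound the cases $\ell \in \{2, 3\}$ I would handle directly. For $\ell = 2$ Builder simply claims the three edges of a triangle and wins by pigeonhole. For $\ell = 3$ Builder opens with $ab$ then $bc$ and branches on the colours: two red edges already give a red $P_3$; two blue edges finish after $cd$ (either blue, giving $P_4 = abcd$, or red, whereupon $ce$ forces either the red $P_3$ $dce$ or the blue $P_4$ $abce$); the mixed case with, say, $ab$ red and $bc$ blue closes in four moves by playing $bd$ (red gives the red $P_3$ $abd$) and then $ad$ (red gives the red $P_3$ $dab$, blue gives the blue $P_4$ $adbc$).

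For $\ell \ge 4$ the plan is to apply Lemma~\ref{lem:3-mainwork}. Outcomes (i) and (ii) finish in at most $5\ell/4 - 1 \le \ceil{5\ell/4}$ rounds. In outcome (iii) Builder has a blue $P_t = a v_2 \cdots v_{t-1} b$ with red edge $au$, using at most $\floor{5t/4} - 1$ rounds, for some $t$ with $\max(\ell - 3, 4) \le t \le \ell$. Writing $j = \ell + 1 - t \in \{1,2,3,4\}$, it remains to extend to a blue $P_{\ell+1}$ (or force a red $P_3$) in $f(j)$ further rounds; a short check across the four residue classes of $\ell$ modulo $4$ shows that it suffices to take $f(1) = 1$, $f(2) = 3$, $f(3) = 4$ and $f(4) = 5$.

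The heart of the proof is an extension lemma realising these values. Observe that the red edge $au$ forces any edge of the form $av$ (with $v$ a new vertex) to be blue, since otherwise $u a v$ is a red $P_3$; this gives Builder a ``free extension'' of the $a$-end of the path. For $f(1)$ simply play $aw$. For $j \in \{2,3\}$ Builder plays $bx_1$ with $x_1$ new. If Painter colours $bx_1$ blue the red threat at $a$ is preserved and Builder recurses on $f(j-1)$. If Painter colours $bx_1$ red, then Builder plays $bx_2$ (forced blue by the threat $x_1 b x_2$), $aw$ (forced blue by $u a w$), and the \emph{closing move} $x_2 x_1$: if red it yields the red $P_3$ $b x_1 x_2$, and if blue it extends the blue path to $w a v_2 \cdots v_{t-1} b x_2 x_1$, delivering three extensions in four rounds. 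For $f(4) = 5$ Builder opens identically; the blue branch recurses on $f(3)$, and in the red branch, after running the $f(3)$ red-branch strategy above, the new blue endpoint $x_1$ is still incident to the red edge $bx_1$, so one further extension $x_1 y$ is forced blue by the threat $b x_1 y$. A short arithmetic verification of $\floor{5t/4} - 1 + f(\ell + 1 - t) \le \ceil{5\ell/4}$ in each residue class of $\ell$ modulo $4$ then completes the proof. The main obstacle will be the tight case $j = 3$ with $\ell \equiv 0 \pmod 4$: only four extra rounds are available for three extensions, and the closing move $x_2 x_1$ is what enables this by leveraging the single red edge $bx_1$ twice.
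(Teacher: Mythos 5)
Your proof is correct and follows essentially the same approach as the paper: the lower bound from Theorem~\ref{thm:lowerbound}, then for $\ell\ge4$ an application of Lemma~\ref{lem:3-mainwork} followed by a case analysis on $j=\ell+1-t\in\{1,2,3,4\}$, with the red edge at the old endpoint (and, in your version, also at the freshly created endpoint $x_1$) used repeatedly as a threat to force blue extensions. The only substantive differences are cosmetic: you handle $\ell\in\{2,3\}$ directly rather than citing Grytczuk--Kierstead--Pra\l{}at and Pra\l{}at as the paper does, and your endgame moves extend the path at both ends (via the closing move $x_2x_1$) whereas the paper extends a single end --- both give the same round counts $f(1),\dots,f(4)=1,3,4,5$, which you correctly verify against the remaining budget in each residue class of $\ell$ modulo $4$.
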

\begin{proof}
Theorem~\ref{thm:lowerbound} implies that $\tilde{r}(P_{3},P_{\ell+1})\ge\lceil5\ell/4\rceil$.
It therefore suffices to prove that Builder 
can win the $\tilde{r}(P_3,P_{\ell+1})$-game within $\lceil5\ell/4\rceil$
rounds. First note that $\tilde{r}(P_3,P_3) = 3$ and $\tilde{r}(P_3,P_4) = 4$, as shown by Grytczuk, Kierstead and Pra\l{}at~\cite{grytczuk} and Pra\l{}at~\cite{pralat} respectively, 
so we may assume $\ell\ge4$. Applying
Lemma~\ref{lem:3-mainwork}, either Builder obtains a blue path $v_{1}\dots v_{t+1}$
and a red edge $v_{1}u$ in at most $5(t+1)/4-1$ rounds for some
$\ell-3\le t+1\le\ell$ or we are done. Write
\[
r(t)=\ceil{\frac{5\ell}{4}}-\left(\floor{\frac{5(t+1)}{4}}-1\right)=\ceil{\frac{\ell}{4}}-\floor{\frac{t+1}{4}}+(\ell-t),
\]
and note that Builder has at least $r(t)$ rounds left to construct either
a red $P_{3}$ or a blue $P_{\ell+1}$. We now split into cases depending
on the precise value of $t$.

\medskip\noindent \textbf{Case 1:} $t=\ell-1$, so that $r(t)=1$.

Builder chooses the edge $v_{0}v_{1}$, where $v_{0}$ is a new vertex.
If Painter colours it red, then $v_{0}v_{1}u$ is a red $P_{3}$ and
we are done. Otherwise, $v_{0}v_{1}\dots v_{\ell}$ is a blue $P_{\ell+1}$
and we are done.

\medskip\noindent \textbf{Case 2:} $t=\ell-2$, so that $r(t)\ge3$.

Builder chooses the edge $v_{\ell-1}x$, where $x$ is a new vertex.
If Painter colours it blue, then we are in Case 1 with an extra round
to spare. If Painter colours it red, Builder chooses the edges $v_{\ell-1}w$
and $wx$, where $w$ is a new vertex. If Painter colours either edge
red then $xv_{\ell-1}w$ or $wxv_{\ell-1}$ respectively is a red
$P_{3}$ and we are done. Otherwise, $v_{1}\dots v_{\ell-1}wx$ is
a blue $P_{\ell+1}$ and we are done.

\medskip\noindent \textbf{Case 3:} $t=\ell-3$, so that $r(t)\ge4$.

Builder chooses the edge $v_{\ell-2}x$, where $x$ is a new vertex.
If Painter colours it blue, then we are in Case 2. If Painter colours
it red, Builder chooses the edges $v_{\ell-2}w$, $wx$ and $xy$,
where $w$ and $y$ are new vertices. If Painter colours any of these
edges red then $xv_{\ell-2}w$, $wxv_{\ell-2}$ or $v_{\ell-2}xy$
respectively is a red $P_{3}$ and we are done. Otherwise, $v_{1}\dots v_{\ell-2}wxy$
is a blue $P_{\ell+1}$ and we are done.

\medskip\noindent \textbf{Case 4:} $t=\ell-4$, so that $r(t)\ge5$.

Builder chooses the edge $v_{\ell-3}x$, where $x$ is a new vertex.
If Painter colours it blue, then we are in Case 3. If Painter colours
it red, Builder chooses the edges $v_{0}v_{1}$, $v_{\ell-3}w$, $wx$
and $xy$, where $v_{0}$, $w$ and $y$ are new vertices. If Painter
colours any of these edges red then $v_{0}v_{1}u$, $xv_{\ell-3}w$,
$wxv_{\ell-3}$ or $v_{\ell-3}xy$ respectively is a red $P_{3}$
and we are done. Otherwise, $v_{0}v_{1}\dots v_{\ell-3}wxy$ is a
blue $P_{\ell+1}$ and we are done.
\end{proof}

\section{\label{sec:exactcyclecalcs}Determining $\tilde{r}(P_{3},C_{\ell})$
for $\ell\ge3$}

Our aim is to determine $\tilde{r}(P_{3},C_{\ell})$ for all $\ell \ge 3$, so proving Theorem~\ref{thm:pathresultsk=2}.
As a warmup, we first determine $\tilde{r}(P_{3},C_{3})$ and $\tilde{r}(P_{3},C_{4})$.
Note that Theorem~\ref{thm:lowerbound} implies that $\tilde{r}(P_{3},C_{3}) \ge 5 \ell /4$ for all $\ell \ge 3$, but this lower bound is too weak when $\ell \le 4$.
Instead, we consider the $\{C_{\ell}\}$-blocking strategy for Painter in an $\tilde{r}(C_{\ell},P_{3})$-game.\COMMENT{We do need to specify the game here -- otherwise the $\{C_\ell\}$-blocking strategy isn't uniquely defined. -JAL 30/09}
\begin{prop}
\label{prop:smallcyclelower}For all $\ell\ge3$, we have $\tilde{r}(P_{3},C_{\ell})\ge\ell+2$.\end{prop}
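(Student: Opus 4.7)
The plan is to reduce the problem to Proposition~\ref{prop:blocking} via the obvious color-swap symmetry $\tilde{r}(P_3, C_\ell) = \tilde{r}(C_\ell, P_3)$ (Painter's strategies and Builder's wins are preserved by swapping red and blue). I then consider the $\tilde{r}(C_\ell, P_3)$-game with Painter following the $\F$-blocking strategy for $\F = \{C_\ell\}$, noting $C_\ell \in \F$. Proposition~\ref{prop:blocking} reduces the task to the single claim that every $\{C_\ell\}$-scaffolding $R$ for $P_3$ satisfies $e(R) \ge \ell$, from which the bound $\tilde{r}(C_\ell, P_3) \ge \ell + e(P_3) = \ell + 2$ follows immediately.

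Unpacking the definition of a $\{C_\ell\}$-scaffolding for $P_3$: there exist three distinct vertices $u, v, w$ with $uv, vw \notin E(R)$ and the path $uvw$ chosen so that both $R + uv$ and $R + vw$ contain a copy of $C_\ell$. Since $R$ itself is $C_\ell$-free, each new cycle must use the newly added edge. Hence $R$ contains a path $P$ of length $\ell - 1$ from $u$ to $v$ and a path $Q$ of length $\ell - 1$ from $v$ to $w$. The entire proof then comes down to establishing that $E(Q) \not\subseteq E(P)$, since this immediately gives $e(R) \ge |E(P) \cup E(Q)| \ge (\ell - 1) + 1 = \ell$.

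To verify $E(Q) \not\subseteq E(P)$, write $P = p_0 p_1 \dots p_{\ell-1}$ with $p_0 = u$ and $p_{\ell-1} = v$, and split into two cases. If $w \notin V(P)$, then the final edge of $Q$ is incident to $w$ and therefore cannot belong to $P$. If instead $w \in V(P)$, then $w = p_i$ for some $i$; the requirement that $u, v, w$ be distinct forces $1 \le i \le \ell - 2$, and the only walk from $v = p_{\ell-1}$ to $w = p_i$ whose edges all lie in $E(P)$ is the subpath $p_{\ell-1} p_{\ell-2} \dots p_i$ of length $\ell - 1 - i < \ell - 1$, so $Q$ must use at least one edge outside $E(P)$. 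Together these cases give $e(R) \ge \ell$, completing the proof.

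The only real conceptual point is the choice of family $\F$: the more obvious $\{P_3\}$-blocking strategy yields only the trivial bound $\tilde{r}(P_3, C_\ell) \ge e(P_3) + e(C_\ell) - 1 = \ell + 1$, whereas switching the roles of the two graphs and blocking $C_\ell$ is what lets us force the extra edge. The case analysis itself is elementary and poses no real obstacle.
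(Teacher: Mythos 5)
Your proof is correct and follows essentially the same approach as the paper: pass to the $\tilde{r}(C_\ell,P_3)$-game, apply Proposition~\ref{prop:blocking} with the $\{C_\ell\}$-blocking strategy, and observe that a $\{C_\ell\}$-scaffolding for $P_3$ must contain two $(\ell-1)$-edge paths sharing only the common endpoint $v$, hence has at least $\ell$ edges. The paper states this last step more tersely (``$R$ must contain two distinct $P_\ell$'s'') and you spell out why $E(Q)\not\subseteq E(P)$, which is a harmless elaboration; note, though, that your closing aside is slightly off --- the $\{P_3\}$-blocking strategy actually also forces $e(R)\ge 2$ when $\ell=3$ (and in fact gives the tight bound $\ell+\lceil\ell/4\rceil$ for $\ell\ge 5$), failing only at $\ell=4$.
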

\begin{proof}
We consider the $\{C_{\ell}\}$-blocking strategy for Painter in the
$\tilde{r}(C_{\ell},P_{3})$-game. Let $R$ be an edge-minimal $\{C_{\ell}\}$-scaffolding
for $P_{3}$. Then $R$ must contain two distinct $P_{\ell}$'s, so
$e(R)\ge\ell$. The result therefore follows from
Proposition~\ref{prop:blocking}.
\end{proof}
The upper bounds are both relatively straightforward.
\begin{prop}
\label{prop:smallcycleexact}We have $\tilde{r}(P_{3},C_{3})=5$ and
$\tilde{r}(P_{3},C_{4})=6$.\end{prop}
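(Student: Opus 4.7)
Both lower bounds come for free from Proposition~\ref{prop:smallcyclelower}, so the task reduces to producing Builder strategies that win within $5$ and $6$ moves, respectively.

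For $\tilde{r}(P_{3},C_{4})\le 6$, the plan is to reduce to classical Ramsey theory. A short direct argument gives $r(P_{3},C_{4})=4$: if a $2$-edge-colouring of $E(K_{4})$ has no red $P_{3}$ then the red subgraph is a matching of size at most $2$, so the blue subgraph is $K_{4}$ minus such a matching and therefore always contains a $C_{4}$ (in fact it is exactly a $C_{4}$ when the matching has size $2$). Builder therefore wins by simply choosing the six edges of an arbitrary $K_{4}$ in any order, and this matches the lower bound.

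For $\tilde{r}(P_{3},C_{3})\le 5$, the plan is an explicit case analysis of a star opening. First, Builder plays $v_{1}v_{2},v_{1}v_{3},v_{1}v_{4}$ in the first three rounds. If two or more of these edges are red, a red $P_{3}$ at $v_{1}$ is already present, so we may assume at most one is red and, after relabelling $v_{2},v_{3},v_{4}$ according to Painter's responses, that $v_{1}v_{2}$ and $v_{1}v_{3}$ are blue. If $v_{1}v_{4}$ is also blue, the plan is to play $v_{2}v_{3}$ on Move 4: blue immediately yields a blue $C_{3}$ on $\{v_{1},v_{2},v_{3}\}$, while a red response is handled by Move 5 being $v_{2}v_{4}$, which either closes a blue $C_{3}$ on $\{v_{1},v_{2},v_{4}\}$ or extends the red edge $v_{2}v_{3}$ into a red $P_{3}$ at $v_{2}$. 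If instead $v_{1}v_{4}$ is red, the plan is to play $v_{2}v_{5}$ for a fresh vertex $v_{5}$ on Move 4: if Painter colours it blue then Move 5 is $v_{1}v_{5}$ (blue closes a $C_{3}$ on $\{v_{1},v_{2},v_{5}\}$, red forms a red $P_{3}$ at $v_{1}$ together with $v_{1}v_{4}$); if Painter colours it red then Move 5 is $v_{2}v_{3}$ (blue closes a $C_{3}$ on $\{v_{1},v_{2},v_{3}\}$, red forms a red $P_{3}$ at $v_{2}$ together with $v_{2}v_{5}$).

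The main obstacle will be choosing the right fourth move in this last subcase. The naive attempt of playing $v_{2}v_{3}$ on Move 4 fails: Painter can safely answer red, leaving the state with red edges $\{v_{1}v_{4},v_{2}v_{3}\}$ and blue edges $\{v_{1}v_{2},v_{1}v_{3}\}$, from which no single fifth edge simultaneously creates both a red $P_{3}$ and a blue $C_{3}$. Introducing the fresh vertex $v_{5}$ on Move 4 sidesteps this: whichever way Painter responds, the resulting configuration has a pair of blue edges at a common endpoint with a red edge attached to one of the two outer vertices, and Builder can always exploit this via an appropriately chosen Move 5.
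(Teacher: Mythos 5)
Your argument is correct and matches the paper's proof essentially verbatim: the lower bounds come from Proposition~\ref{prop:smallcyclelower}, the $C_4$ case reduces to $r(P_3,C_4)=4$ and playing all of $K_4$, and the $C_3$ case opens with a star $K_{1,3}$, branches on whether one of the three initial edges is red, and in that subcase introduces a fresh vertex before closing, exactly as in the paper (up to relabelling). The remark explaining why the naive fourth move $v_2v_3$ fails when $v_1v_4$ is red is a useful piece of motivation that the paper does not include, but it does not change the underlying strategy.
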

\begin{proof}
By Proposition~\ref{prop:smallcyclelower}, we have $\tilde{r}(P_{3},C_{3})\ge5$
and $\tilde{r}(P_{3},C_{4})\ge6$. It is easy to show that $r(P_{3},C_{4})=4$ (see
e.g. Radziszowski~\cite{radziszowski}), so we also have $\tilde{r}(P_{3},C_{4})\le \binom{4}2 = 6$ as Builder may simply choose
the edges of a $K_{4}$. It therefore suffices to prove that Builder
can win the $\tilde{r}(P_{3},C_{3})$-game in 5 rounds.

Take new vertices $u$, $v$, $w$, $x$, $y$ and $z$. Builder first
chooses the edges $uv$, $uw$ and~$ux$. If Painter colours more
than one of these edges red, then we have obtained a red $P_{3}$
and we are done.

Suppose Painter colours $uv$, $uw$ and $ux$ blue. Then Builder
chooses the edges $vw$ and $wx$. If Painter colours either edge
blue, then $vwuv$ or $wxuw$ respectively is a blue $C_{3}$ and
we are done. If Painter colours both edges red, then $vwx$ is a red $P_3$ and we are done.

Finally, suppose Painter colours (without loss of generality) $uv$
red, but $uw$ and $ux$ blue. Then Builder chooses the edge $xy$.
If Painter colours $xy$ red, Builder chooses the edge $wx$, yielding
either a red $P_{3}$ (namely $wxy$), or a blue $C_{3}$, $wxuw$, and we
are done. If Painter colours $xy$ blue, Builder chooses the edge
$yu$, yielding either a red $P_{3}$ (namely $yuv$) or a blue $C_{3}$ (namely $uxyu$), and we are done.
\end{proof}

We now determine $\tilde{r}(P_{3},C_{\ell})$ for $\ell\ge5$. As
in Section~\ref{sec:exactpathcalcs}, Builder's strategy will be to build
up a long blue path using Lemma~\ref{lem:3-mainwork}.
Builder will then carefully close this path into a blue $C_{\ell}$.
\begin{thm}
\label{thm:P3-Cl-exact}For all $\ell\ge5$, $\tilde{r}(P_{3},C_{\ell})=\lceil5\ell/4\rceil$.\end{thm}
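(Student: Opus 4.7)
The lower bound $\tilde{r}(P_3, C_\ell) \ge \lceil 5\ell/4 \rceil$ follows directly from Theorem~\ref{thm:lowerbound}, since $C_\ell$ has $\ell$ edges and maximum degree~$2$. For the upper bound, I will describe a strategy for Builder that wins the $\tilde{r}(P_3, C_\ell)$-game in at most $\lceil 5\ell/4 \rceil$ rounds, modelled on the proof of Theorem~\ref{thm:P3-Pl-exact}: Builder uses Lemma~\ref{lem:3-mainwork} to build a long blue path with a red edge at one endpoint, then carefully closes it into a blue~$C_\ell$.

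Builder first applies Lemma~\ref{lem:3-mainwork}. The three outcomes are (i) a red $P_3$, in which case Builder wins immediately; (ii) a blue $P_{\ell+1}$ in at most $\lfloor 5\ell/4 \rfloor - 1$ rounds, with the red edge $v_1 u$ now internal to the path; or (iii) a blue path $v_1 \dots v_s$ with $v_1 u$ red at its endpoint, for some $\ell - 3 \le s \le \ell$, in at most $\lfloor 5s/4 \rfloor - 1$ rounds. As in the proof of Theorem~\ref{thm:P3-Pl-exact}, in outcome~(iii) the remaining budgets $r(s) := \lceil 5\ell/4 \rceil - \lfloor 5s/4 \rfloor + 1$ satisfy $r(\ell) \ge 1$, $r(\ell-1) \ge 3$, $r(\ell-2) \ge 4$, and $r(\ell-3) \ge 5$.

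For each case of outcome~(iii), I would run a branching procedure parallel to the corresponding case of Theorem~\ref{thm:P3-Pl-exact}, but replacing the final extension step with a closing step back to $v_1$. For $s = \ell$, Builder simply chooses $v_1 v_\ell$: if Painter colours it blue we obtain the cycle $v_1 \dots v_\ell v_1$, and if red we obtain the red $P_3$ $u v_1 v_\ell$. For $s < \ell$, Builder first chooses $v_s x$ for a new vertex~$x$; if Painter plays blue we reduce to the $s+1$ case, while if red Builder uses the new red edge $v_s x$ to chain further forced blue edges through fresh vertices $w, x', y, \ldots$ (each edge forced blue by the red~$P_3$ threat at $x$ via $v_s x$), and finally closes via an edge back to $v_1$ which is forced blue by the threat $v_1 u$. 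The resulting blue cycle has exactly $\ell$ vertices, and a round-by-round count verifies that each branch fits within the budget $r(s)$.

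The most delicate part of the argument is outcome~(ii): Builder has a blue $P_{\ell+1} = z_1 \dots z_{\ell+1}$ with $v_1 = z_k$ internal (tracing through Lemma~\ref{lem:3-mainwork}'s proof shows $6 \le k \le \ell - 2$), so the natural closing edges $z_1 z_\ell$ and $z_2 z_{\ell+1}$ carry no red~$P_3$ threat. My plan is to close by choosing two edges $u z_i$ and $u z_{i + \ell - 2}$ for some $i \in \{1,2,3\}$: since $z_i, z_{i+\ell-2} \neq v_1$ for all such $i$, any red response yields a red $P_3$ through $u v_1$, while both blue responses produce a blue $C_\ell$ on $\{u, z_i, z_{i+1}, \dots, z_{i+\ell-2}\}$. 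This closing uses $2$ rounds, which fits the remaining budget when $\ell \not\equiv 0 \pmod 4$. When $\ell \equiv 0 \pmod 4$ the budget is one round short, and the hardest step will be to save a single round somewhere inside Lemma~\ref{lem:3-mainwork} --- for instance by refining its Case~2 so that it is only invoked when $t < \ell - 4$, producing the blue $P_{\ell+1}$ in at most $\lfloor 5\ell/4 \rfloor - 2$ rounds.
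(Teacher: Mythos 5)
There is a genuine gap in your handling of outcome (ii) of Lemma~\ref{lem:3-mainwork}, and it traces to a wrong assumption. You write that the blue $P_{\ell+1}$ comes ``with the red edge $v_1u$ now internal to the path,'' tracing through Case 2 of the lemma's proof. But Lemma~\ref{lem:3-mainwork}(ii) gives \emph{no} such guarantee, and the proof has another route to it: the very first invocation of Lemma~\ref{lem:3-buildingblock} with $q=\ell+1$ can return a blue $P_{\ell+1}$ in exactly $\ell$ rounds (Painter colours every edge blue). In that scenario there is no red edge anywhere, so your two-move closing ($uz_i$ and $uz_{i+\ell-2}$) simply does not exist. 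The only generic way to close an all-blue $P_{\ell+1}$ is three moves (e.g.\ $z_1z_\ell$; if red then $z_1z_3$ and $z_\ell z_2$), but the remaining budget is $\lceil 5\ell/4\rceil - \ell = \lceil \ell/4 \rceil$, which equals $2$ for $\ell\in\{5,6,7,8\}$. So your strategy genuinely fails there. Separately, even in the branch where a red edge does exist, your own budget accounting gives only one spare round when $\ell\equiv 0\pmod 4$, and you acknowledge you need to ``save a round somewhere inside Lemma~\ref{lem:3-mainwork}'' --- but you don't carry that out, and it isn't obviously possible without restating and reproving the lemma.

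The fix used in the paper is a single adjustment you were close to but did not make: apply Lemma~\ref{lem:3-mainwork} with the parameter $\ell-1$ rather than $\ell$. Outcome (ii) then produces only a blue $P_\ell$ in at most $\lfloor 5(\ell-1)/4\rfloor - 1$ rounds, leaving a budget of $\lceil 5\ell/4\rceil - \lfloor 5(\ell-1)/4\rfloor + 1 \ge 3$ in every residue class mod~$4$, which is exactly enough for the universal three-edge closing ($v_\ell v_1$, and if that is red, $v_1v_3$ and $v_\ell v_2$) with no dependence on any red edge. The price is that outcome (iii) now ranges over $\ell-4\le t\le \ell-1$, so one additional sub-case ($t=\ell-4$) must be handled, but that sub-case is routine. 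Your treatment of outcome (iii) is otherwise sound (modulo the hand-waving), and your lower bound is correct, so the case analysis for closing a short path with a guaranteed endpoint red edge is the part you have right; it is the long-path, possibly-all-blue case that breaks.
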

\begin{proof}
Theorem~\ref{thm:lowerbound} implies that $\tilde{r}(P_{3},C_{\ell})\ge\lceil5\ell/4\rceil$.
It therefore suffices to prove that Builder can 
win the $\tilde{r}(P_3,C_\ell)$-game within $\lceil5\ell/4\rceil$
rounds. By Lemma~\ref{lem:3-mainwork}, Builder can force one of the following structures independent of Painter's choices:
\begin{enumerate}
\item a red $P_{3}$ in at most $5(\ell-1)/4-1$ rounds.
\item a blue $P_{\ell}$ in at most $5(\ell-1)/4-1$ rounds.
\item a blue $P_{t}$ with one endpoint incident to a red edge
in at most $5t/4-1$ rounds for some $\ell-4\le t\le\ell-1$.
\end{enumerate}
\noindent If Painter chooses (i), then we are done. Suppose Painter
chooses (ii), so that Builder has at least
\[
\ceil{\frac{5\ell}{4}}-\left(\frac{5(\ell-1)}{4}-1\right)=\ceil{\frac{5\ell}{4}}-\frac{5\ell}{4}+\frac{9}{4}>2
\]
rounds to construct a red $P_{3}$ or a blue $C_{\ell}$, and let $v_{1}\dots v_{\ell}$
be the corresponding blue path. Then Builder chooses the edges $v_{\ell}v_{1}$,
$v_{1}v_{3}$ and $v_{\ell}v_{2}$. If Painter colours $v_{\ell}v_{1}$
blue then $v_{1}\dots v_{\ell}v_{1}$ is a blue $C_{\ell}$ and we
are done. If Painter colours $v_{\ell}v_{1}$ red and $v_{1}v_{3}$
or $v_{\ell}v_{2}$ red, then $v_{\ell}v_{1}v_{3}$ or $v_{1}v_{\ell}v_{2}$
respectively is a red $P_{3}$ and we are done. Finally, if Painter
colours both $v_{1}v_{3}$ and $v_{\ell}v_{2}$ blue, then $v_{1}v_{3}v_{4}\dots v_{\ell}v_{2}v_{1}$
is a blue $C_{\ell}$ and we are done. 

Finally, suppose Painter chooses (iii). Let $v_{1}\dots v_{t}$ be the corresponding blue path and let 
$v_{1}u$ be a red edge. Write 
\[
r(t)=\ceil{\frac{5\ell}{4}}-\left(\floor{\frac{5t}{4}}-1\right)=\ceil{\frac{\ell}{4}}-\floor{\frac{t}{4}}+\ell-t+1,
\]
so that Builder has at least $r(t)$ rounds left to construct either
a red $P_{3}$ or a blue~$C_{\ell}$. We split into cases depending
on the precise value of $t$.

\medskip\noindent \textbf{Case 1:} $t=\ell-1$, so that $r(t)\ge3$.

Builder first chooses the edge $v_{\ell-1}w$, where $w$ is a new
vertex. If Painter colours $v_{\ell-1}w$ blue, then Builder chooses
the edge $wv_{1}$. If Painter colours $wv_{1}$ red then $wv_{1}u$
is a red $P_{3}$, and if Painter colours $wv_{1}$ blue then $v_{1}v_{2}\dots v_{\ell-1}wv_{1}$
is a blue $C_{\ell}$. Now suppose Painter colours $v_{\ell-1}w$
red instead. Then Builder chooses the edges $v_{\ell-1}x$ and $xv_{1}$,
where $x$ is a new vertex. If Painter colours either edge red, then
$wv_{\ell-1}x$ or $xv_{1}u$ respectively is a red $P_{3}$ and we
are done. Otherwise, $v_{1}\dots v_{\ell-1}xv_{1}$ is a blue $C_{\ell}$
and we are done.

\medskip\noindent \textbf{Case 2:} $t=\ell-2$, so that  $r(t)\ge4$.

Builder first chooses the edge $v_{\ell-2}w$, where $w$ is a new
vertex. If Painter colours $v_{\ell-2}w$ blue then we are in Case
1, so suppose Painter colours $v_{\ell-2}w$ red. Builder then chooses
the edges $v_{\ell-2}x$, $xw$ and $wv_{1}$, where $x$ is a new
vertex. If Painter colours any of these edges red, then $wv_{\ell-2}x$,
$xwv_{\ell-2}$ or $v_{\ell-2}wv_{1}$ respectively is a red $P_{3}$
and we are done. Otherwise, $v_{1}v_{2}\dots v_{\ell-2}xwv_{1}$ is
a blue $C_{\ell}$ and we are done.

\medskip\noindent \textbf{Case 3:} $t=\ell-3$, so that $r(t)\ge5$.

Builder first chooses the edge $v_{\ell-3}w$, where $w$ is a new
vertex. If Painter colours $v_{\ell-3}w$ blue then we are in Case
2, so suppose Painter colours $v_{\ell-3}w$ red. Builder then chooses
the edges $v_{\ell-3}x$, $xw$, $wy$ and $yv_{1}$, where $x$ and
$y$ are new vertices. If Painter colours any of these edges red,
then $wv_{\ell-3}x$, $xwv_{\ell-3}$, $v_{\ell-3}wy$ or $yv_{1}u$
respectively is a red $P_{3}$ and we are done. Otherwise, $v_{1}v_{2}\dots v_{\ell-3}xwyv_{1}$
is a blue $C_{\ell}$ and we are done.

\medskip\noindent \textbf{Case 4:} $t=\ell-4$, so that $r(t)\ge6$.

Builder first chooses two edges $wx$ and $xy$, where $w$, $x$
and $y$ are new vertices. If Painter colours both edges red, $wxy$
is a red $P_{3}$ and we are done. Now suppose that Painter colours
one edge blue and one red, say $wx$ red and $xy$ blue. Then Builder
chooses the edges $v_{\ell-4}w$, $wz$, $zx$ and $yv_{1}$, 
where $z$ is a new vertex. If Painter
colours any of these edges red, then $v_{\ell-4}wx$, $xwz$, $zxw$
or $yv_{1}u$ respectively is a red $P_{3}$ and we are done. Otherwise,
$v_{1}v_{2}\dots v_{\ell-4}wzxyv_{1}$ is a blue $C_{\ell}$ and we
are done.

We may therefore assume that Painter colours both $wx$ and $xy$
blue. Builder now chooses the edge $v_{\ell-4}w$. If Painter colours
$v_{\ell-4}w$ blue, we are in Case 1 (taking our path to be $v_{1}v_{2}\dots v_{\ell-4}wxy$),
so suppose Painter colours $v_{\ell-4}w$ red. Then Builder chooses
the edges $v_{\ell-4}z$, $zw$ and $yv_{1}$, where $z$ is a new
vertex. If Painter colours any of these edges red, then $wv_{\ell-4}z$,
$zwv_{\ell-4}$ or $yv_{1}u$ respectively is a red $P_{3}$ and we
are done. Otherwise, $v_{1}v_{2}\dots v_{\ell-4}zwxyv_{1}$ is a blue
$C_{\ell}$ and we are done.
\end{proof}

\section{\label{sec:C4}Bounding $\tilde{r}(C_{4},P_{\ell+1})$ for $\ell \ge 3$}

Our aim is to prove Theorem~\ref{prop:C4}, i.e. to bound $\tilde{r}(C_{4},P_{\ell+1})$ for all $\ell \ge 3$.
%The lower bound is proved by considering a $\{C_{4}\}$-blocking strategy for Painter.
%
%\begin{prop} \label{prop:Cklower}
%Let $k \in \mathbb{N}$ with $k \ge 3$.
%Let $H$ be a connected graph.
%Then $\tilde{r}( C_{k}, H ) \ge |H| + e(H) -1$.
%\end{prop}
%
%
%\begin{proof}
%We consider the $\{C_{k}\}$-blocking strategy for Painter in the $\tilde{r}(C_{k},H)$-game.
%Let $R$ be a $\{ C_k \}$-scaffolding for~$H$ with $e(R)$ minimal.
%Note that each $(R,\{C_{k}\})$-forceable edge must lie entirely in a component of~$R$.
%Since $H$ is connected, $R$ is connected and $|R| \ge |H|$.
%Hence, $e(R) \ge  |H| - 1$. 
%We are done by Proposition~\ref{prop:blocking}.
%\end{proof}
First we prove that $\tilde{r}( C_{4},P_{4}) = 8$.
%Note that a more detailed analysis of the $\{C_{4}\}$-blocking strategy for Painter is needed in order to obtain a better lower bound.

\begin{prop} \label{prop:C4P4}
$\tilde{r}( C_{4},P_{4}) = 8$.
\end{prop}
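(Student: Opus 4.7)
My plan is to prove the two bounds $\tilde{r}(C_4,P_4)\ge 8$ and $\tilde{r}(C_4,P_4)\le 8$ separately.

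For the lower bound, the natural approach is to use Proposition~\ref{prop:blocking} with $\F=\{C_4\}$: since $e(P_4)=3$, it would suffice to show that every $\{C_4\}$-scaffolding $R$ for $P_4$ satisfies $e(R)\ge 5$. Observe first that an edge $xy\notin R$ is $(R,\F)$-forceable iff $R$ contains a path of length exactly $3$ from $x$ to $y$; in particular both endpoints of any forceable edge lie in $V(R)$, and $R$ itself must contain a $P_4$ in order for any forceable edges to exist. Suppose for contradiction that $e(R)\le 4$. If $e(R)\le 3$, then containing a $P_4$ forces $R=P_4$, which admits only one forceable edge (joining the two endpoints of the path). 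If $e(R)=4$, I would enumerate the $C_4$-free graphs on at most five vertices with no isolated vertex containing a $P_4$: these are exactly $P_5$, the ``Y''-tree on $5$ vertices, the paw (triangle with a pendant edge) and $P_4+K_2$. A direct check of all length-$3$ paths in each case shows $R$ has at most two forceable edges, and in every case these two edges either share a vertex (forming a $P_3$) or are vertex-disjoint, so they cannot extend to a forced $P_4$. This contradicts $R$ being a scaffolding.

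For the upper bound, I would describe an explicit Builder strategy. In the first $6$ rounds Builder claims every edge of a $K_4$ on some set $\{w,x,y,z\}$. If the resulting colouring contains a red $C_4$ or a blue $P_4$, Builder wins. Otherwise, a brief analysis of $4$-vertex graphs shows the red graph must be one of (A) a triangle with blue star $K_{1,3}$ as complement, (B) a $K_{1,3}$ with blue triangle as complement, or (C) a paw with blue $P_3$ as complement. In each case Builder uses two additional rounds involving a single new vertex~$u$. For example, in (A) with red triangle $xyz$ and blue star at $w$, Builder plays $ux$: if it is blue then $y\,w\,x\,u$ is a blue $P_4$, and if it is red Builder plays $uy$, which is either blue (yielding the blue $P_4$ $x\,w\,y\,u$) or red (yielding the red $C_4$ $u\,x\,z\,y\,u$). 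Cases (B) and (C) are handled analogously; in (B) Builder plays $ux$ then $uy$, exploiting the red star at $w$ to close a red $4$-cycle $w\,x\,u\,y\,w$, and in (C) Builder plays $yu$ then $uz$, so that Painter either creates the blue $P_4$ $z\,w\,y\,u$ (after the first) or $w\,y\,u\,z$ (after the second) or the red $C_4$ $u\,y\,x\,z\,u$.

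The main obstacle is the case analysis at both ends, which is the kind of finite combinatorial book-keeping that is routine in principle but demands care: on the lower-bound side one must enumerate the small $C_4$-free hosts of $P_4$ and verify the forceable-edge count for each, and on the upper-bound side one must check that each of the three residual Painter colourings of $K_4$ genuinely yields to a two-round endgame with no escape for Painter.
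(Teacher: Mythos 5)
Your proposal is correct. The lower-bound half is essentially the paper's own argument: both consider the $\{C_4\}$-blocking strategy and show that any $\{C_4\}$-scaffolding for $P_4$ has at least $5$ edges. The paper phrases this as ``$R$ must contain three distinct $P_4$'s, so $e(R)\ge 5$ as $R$ is $C_4$-free,'' while you phrase it as an enumeration of the small $C_4$-free hosts of $P_4$ together with a check of the forceable edges in each; the content is the same, yours merely spelled out in more detail. (One small slip: $P_4+K_2$ has six vertices, so the enumeration should read ``on at most six vertices.'')

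The upper-bound half takes a genuinely different route. The paper has Builder open with the four edges $uv_1,\dots,uv_4$ of a star at a single vertex $u$, branching on how many of those four edges Painter colours blue; this keeps Builder adaptive throughout, and on some branches fewer than six opening edges are ever claimed. You instead have Builder commit the first six rounds to all of $K_4$, then finish in two further rounds on each of the three residual Painter colourings (blue $K_{1,3}$ with red $K_3$, blue $K_3$ with red $K_{1,3}$, blue $P_3$ with red paw). Both reach $8$; the $K_4$ opening buys a tidy three-way endgame at the price of a non-adaptive start.

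One notational error in your case (C): taking the red paw to be the triangle $xyz$ plus pendant $wx$, so that the blue $P_3$ is $y$-$w$-$z$, after Painter colours $yu$ red and $uz$ blue the blue $P_4$ obtained is $y\,w\,z\,u$ (edges $yw$, $wz$, $zu$), not $w\,y\,u\,z$, which would require the red edge $yu$. The two-move endgame strategy itself is sound.
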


\begin{proof}
First, we consider the $\{C_{4}\}$-blocking strategy for Painter in the $\tilde{r}(C_{4},P_{4})$-game.
Let $R$ be an edge-minimal $\{C_{4}\}$-scaffolding for~$P_{4}$.
Then $R$ must contain three distinct $P_{4}$'s, so $e(R)\ge 5$ as $R$ is $C_4$-free.
Proposition~\ref{prop:blocking} implies that $\tilde{r}( C_{4},P_{4}) \ge 8$.

It therefore suffices to prove that Builder can win the $\tilde{r}(C_4,P_4)$-game within $8$ rounds.
Builder first chooses the edges $uv_1$, \dots, $uv_4$ for distinct vertices $u,v_1, \dots, v_4$.
Without loss of generality we may assume that there exists an integer $j$ such that Painter colours the edges $uv_i$ blue if $i \le j$, and red otherwise.

Suppose $j \ge 2$.
Then Builder chooses four edges $v_1w$, $v_2w$, $v_1w'$ and $v_2w'$, where $w$ and $w'$ are new vertices.
If Painter colours all edges red, then $v_1 w v_2 w' v_1$ is a red~$C_4$.
If Painter colours one of the edges blue say $v_2w$, then $ v_1 u v_2 w$ is a blue~$P_4$.

Suppose $j \le 1$.
Then Builder chooses edges $v_1v_2$ and $v_1v_3$.
If Painter colours both edges red, then $u v_2 v_1 v_3 u$ is a red~$C_4$.
Suppose that Painter colours both edges blue.
Builder then chooses the edges $v_2v_4$ and $v_3v_4$.
If Painter colours both $v_2v_4$ and $v_3v_4$ red, then $u v_2 v_4 v_3  u$ is a red $C_4$.
Otherwise, $v_3 v_1 v_2 v_4$ or $v_2 v_1 v_3 v_4$ is a blue $P_4$.
Therefore we may assume that $v_1v_2$ is blue and $v_1v_3$ is red.
Further suppose that $j=1$ and so $uv_1$ is blue.
Then Builder chooses the edges $v_2v_3$ and $v_2v_4$.
If Painter colours one of them blue, then $u v_1 v_2 v_3$ or $u v_1 v_2 v_4$ is a blue~$P_4$.
Otherwise $u v_3 v_2 v_4 u$ is a red~$C_4$.
Finally, suppose that $j = 0$.
Builder chooses the edges $v_2v_3$ and $v_3 v_4$.
If Painter colours one of them red, then $u v_1 v_3 v_2 u$ or $u v_1 v_3 v_4 u$ is a red~$C_4$.
Otherwise $v_1 v_2 v_3 v_4$ is a blue $P_4$.
\end{proof}

We now prove Theorem~\ref{prop:C4}.

\begin{proof}[Proof of Theorem~\ref{prop:C4}]
The lower bound follows from Lemma~\ref{lem:Ckscaf} and\break 
$\tilde{r}(C_{4},P_{4})=8$ by Proposition~\ref{prop:C4P4}.
To prove the theorem, it is enough to show that $\tilde{r}( C_{4},P_{\ell +1}) \le  4 \ell -4$ for all $\ell \ge 3$.
We proceed by induction on~$\ell$.
By Proposition~\ref{prop:C4P4}, this is true for $\ell = 3$.
Suppose instead that $\ell \ge 4$ and 
Builder first spends at most $4 \ell - 8$ rounds forcing Painter to
construct a red $C_4$ or a blue $P_{\ell} = v_1 \dots v_{\ell}$.
(This is possible by the induction hypothesis.)
We may assume that the latter holds or else we are done.
Then Builder chooses four edges $v_1 x$, $v_{\ell} x$, $v_1 y$ and $v_{\ell} y$, where $x$ and $y$ are new vertices.
If Painter colours all edges red, then $v_1 x v_{\ell} y v_1$ is a red~$C_4$.
If Painter colours one of the edges blue, say $v_{\ell} x$, then $v_1 \dots v_{\ell}  x$ is a blue~$P_{\ell+1}$.
In total, Builder has chosen at most $4 \ell -4$ edges and the proposition follows.
\end{proof}

\section*{Acknowledgement}
We would like to thank the referee for pointing out an error in the early manuscript.

\appendix

\section{\label{sec:P4-bound}Bounding $\tilde{r}(P_{4},P_{\ell+1})$
for $\ell\ge3$}

Here, we prove Theorem~\ref{thm:pathresultsk=3}.
Lemma~\ref{lem:p4forestscaffold} implies that $\tilde{r}(P_4,P_{\ell+1}) \ge (7 \ell +2)/5$ for $\ell \ge 3$.
It therefore suffices to bound $\tilde{r}(P_{4},P_{\ell+1})$ above, which we do in Theorem~\ref{thm:P4-upper}.
In the following discussion we take on the role of Builder, and we will assume for clarity
that Painter will not voluntarily lose the game by creating a red~$P_{4}$.

We will employ the following strategy to construct a blue $P_{\ell+1}$.
We will obtain two (initially
trivial) vertex-disjoint blue paths $Q$ and $R$, repeatedly extend them, and then join them together to form
a blue $P_{\ell+1}$ when they are sufficiently long.
Here $Q$ is distinct from $R$ in that we require one of $Q$'s endpoints to be incident to a red edge $bc$ disjoint from $V(R)$.
Some of our methods for extending a blue path require this property, and others destroy it. 
Thus at each stage we will extend either $Q$ or $R$ depending on which of our extension methods Painter allows us to use.

We will use the following lemma to join $Q$ and $R$ together (and sometimes to extend $Q$).%
	
\begin{lem}
\label{lem:joinpaths}
Let $Q$ be a (possibly trivial) blue path with endpoints $a$ and $b$, where $b$ is incident to a red edge~$bc$.
Let $R$ be a (possibly trivial) blue path vertex-disjoint from $V(Q) \cup \{c\}$.
Then Builder can force Painter to construct one of the following while uncovering at most $2$ edges:
\begin{enumerate}
\item a blue path $Q'$ of length $e(Q)+e(R)+1$ with one endpoint incident to a red edge.
\item a red $P_{4}$.
\end{enumerate}
\end{lem}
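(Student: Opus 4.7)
The plan is to preserve the red threat edge $bc$ by extending $Q$ at the $a$-end toward $R$, using the red edges $bc$ and (potentially) a new red edge $ad$ to force either a red $P_4$ or a blue extension carrying a fresh red threat at the other endpoint. Label the endpoints of $R$ as $d$ and $e$, with $d=e$ if $R$ is trivial.

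Builder's first move is the edge $ad$. A blue response immediately yields $Q' := e R d a Q b$, a blue path of length $e(R)+1+e(Q) = e(Q)+e(R)+1$ whose endpoint $b$ is still incident to the red edge $bc$, so outcome (i) is achieved after a single move. We may therefore assume $ad$ is coloured red, so now $ad$ and $bc$ are red edges on disjoint vertex sets.

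Builder's second move aims to close a red $P_4$ through $ad$ and $bc$ while simultaneously threatening a blue extension with endpoint incident to one of these red edges. In the main case $a \ne b$ (i.e.\ $Q$ nontrivial), Builder plays $bd$. If Painter colours it red, then $c b d a$ is a red $P_4$ (its four vertices are distinct since $V(Q)$, $V(R)$, and $\{c\}$ are pairwise disjoint and $a \ne b$), giving (ii). If Painter colours it blue, then $Q' := a Q b d R e$ is a blue path of length $e(Q)+e(R)+1$ with endpoint $a$ incident to the red chord $ad$, giving (i). When $Q$ is trivial but $R$ is not ($a=b$ and $d\ne e$), the edge $bd$ coincides with $ad$, so Builder plays $ce$: a red colouring yields the red $P_4$ $d a c e$, and a blue colouring yields the blue path $c e R d$ of length $e(R)+1$ with endpoint $c$ incident to red $cb$.

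The principal obstacle is the fully degenerate case where both $Q$ and $R$ are trivial, as then $bd$, $ce$, and $ad$ all coincide and the ``symmetric'' alternative $cd$ only produces the red triangle $acd$ rather than a $P_4$. This is resolved by passing to a fresh vertex: Builder plays $dx$ for a new vertex $x$, and Painter's response gives either the red $P_4$ $x d a c$ or the blue edge $dx$ (of the required length $e(Q)+e(R)+1=1$) with endpoint $d$ incident to red $ad$. In every branch Builder uncovers at most two edges, completing the proof.
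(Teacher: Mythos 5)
There is a genuine gap: your argument for producing the red $P_4$ in the ``$Q$ nontrivial'' branch assumes that $V(Q)$, $V(R)$, and $\{c\}$ are pairwise disjoint, but the hypothesis of the lemma only guarantees that $R$ is vertex-disjoint from $V(Q)\cup\{c\}$; it does \emph{not} exclude $c\in V(Q)$. In particular the case $a=c$ is perfectly possible -- take $Q$ a blue path of length at least~$2$ whose endpoints $a$ and $b$ are joined by a red chord $ab=bc$ -- and the paper's own proof explicitly carves out the case ``$a=c$ or $Q$ trivial'' for separate treatment. In that situation, after you play $ad$ and Painter colours it red, the second move $bd$ coloured red gives the red edges $ab$, $ad$, $bd$, which form a red \emph{triangle} on the three vertices $a,b,d$ rather than the claimed red $P_4$ $cbda$ (whose first and last vertices coincide). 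Builder has then exhausted the budget of two edges without achieving either outcome.

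Your treatment of the other regimes is sound and in fact mirrors the paper closely: when $Q$ is nontrivial and $a\ne c$, your sequence $ad$ then $bd$ is essentially the paper's $ax$ then $bx$, and your degenerate branches ($Q$ trivial with $R$ nontrivial, and both trivial) are handled correctly. To close the gap, you need an alternative second move when $a=c$. One fix, following the paper, is not to play $ad$ first but instead to play $bx$ and $cy$ (both endpoints of $R$ joined to the two ends of the red edge $bc$); if both are red then $xbcy$ is a genuine red $P_4$ even when $a=c$ because $x,y\in V(R)$ are disjoint from $\{b,c\}$, and if either is blue it extends a blue path whose endpoint ($a$ or $b$) is still incident to the red edge $bc$. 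Alternatively, within your framework, when $a=c$ and $R$ is nontrivial you could play $be$ (the \emph{other} end of $R$) instead of $bd$ after $ad$ is red: a red response gives $dabe$, which is a red $P_4$ on four distinct vertices; but you would still need yet another argument when $a=c$ and $R$ is trivial, where $d=e$.
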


\begin{proof}
First suppose that $R$ is non-trivial, and let $x$ and $y$ be the endpoints of~$R$.
Moreover, suppose that either $a = c$ or $Q$ is trivial, so that both endpoints of $Q$ are incident to~$bc$.
Builder chooses the edges $bx$ and~$cy$.
If Painter colours both edges red, then $xbcy$ is a red~$P_4$.
Hence, without loss of generality, we may assume that Painter colours $bx$ blue.
Then $Q' := aQbxRy$ is a blue path of length $e(Q)+e(R)+1$, where $a$ is incident to the red edge~$bc$.

Now suppose that $Q$ is non-trivial and $a \ne c$.
Builder chooses the edge $ax$.
If Painter colours $ax$ blue, then $bQaxRy$ is a blue path of length $e(Q)+e(R)+1$ with endpoint $b$ incident to the red edge~$bc$.
So we may assume that Painter colours $ax$ red.
Builder then chooses the edge $bx$.
If Painter colours $bx$ red, then $cbxa$ is a red~$P_4$.
Otherwise $Q' := aQbxRy$ is a blue path of length $e(Q)+e(R)+1$ where $a$ is incident to the red edge $ax$.

Finally, suppose $R$ is trivial with endpoint $x$. Let $y$ be a new vertex. Then the argument above implies the lemma on replacing $xRy$ with $x$ throughout.
\end{proof}

The arguments that follow are by necessity somewhat technical. The reader may therefore find the following intuition useful. 
\begin{enumerate}
\item For every seven edges we uncover, we will extend either $Q$ or $R$ by five blue edges.
\item When we join $Q$ and $R$, $e(Q)+e(R)+1$ should not be too much greater than $\ell$. 
\end{enumerate}
It is clear that following the above principles will yield a bound of the form \linebreak $\tilde{r}(P_4,P_{\ell+1})\le 7\ell/5+C$ for some constant $C$. We will violate (i) in the first and last phases of Builder's strategy, but this introduces only constant overhead.

Before we can apply Lemma~\ref{lem:joinpaths} to join $Q$ and $R$ and obtain a blue $P_{\ell+1}$, 
we must extend them until $e(Q)+e(R)+1\ge\ell$.
Each time we extend $Q$ and $R$, we require two independent edges of the same
colour. (Naturally, we can obtain these by choosing three independent edges.) 
If these edges are blue, we may extend $Q$ efficiently using Lemma~\ref{lem:useblueedges} (see Section~\ref{sub:blue}).
If they are red, we may extend either $Q$ or $R$ efficiently using Lemma~\ref{lem:userededges} (see Section~\ref{sub:red}).
Note that the latter case is significantly harder.
We then apply Lemmas~\ref{lem:useblueedges} and~\ref{lem:userededges} repeatedly to prove Theorem~\ref{thm:P4-upper} (see Section~\ref{sub:fullalgo}).

In our figures throughout the section, we shall represent blue edges with solid lines and red edges with dotted lines.

\subsection{\label{sub:blue} Extending $Q$ using two independent blue edges $e$ and $f$.}

Throughout this subsection, $e$ and $f$ will be two independent blue edges vertex-disjoint from $Q$ and $R$.
We will prove that we can use these two edges to efficiently extend $Q$ -- see Lemma~\ref{lem:useblueedges}. We first 
define a special type of path which will be important to the extension process.

\begin{defn}\label{def:typeA}
We say that a path $xySz$ is \emph{of type~A} if $xy$ is a red edge and $S$ is a non-trivial blue path with endpoints $y$ and~$z$.
\end{defn}
Note that the above definition requires $x \notin V(S)$. For the remainder of the section, if we refer to a path $xySz$ of type A, we shall take it as read that $x,y,z$ and $S$ are as in Definition~\ref{def:typeA}.

We now sketch the proof of Lemma~\ref{lem:useblueedges}. By greedily extending the blue edge~$e$ into a path, Builder can obtain either a long blue path or a path of type~A (see Lemma~\ref{lem:findA-path}).
If Builder obtains a long blue path $P$, then we can simply join $P$ and $Q$ together using Lemma~\ref{lem:joinpaths}.
Suppose instead Builder obtains a path $xySz$ of type~A.
Then we use Lemma~\ref{lem:useA-path} to efficiently join $S$ and $Q$ together.
In either case, the resulting blue path $Q'$ also has an endpoint incident to a red edge, so $Q'$ retains the defining property of~$Q$.

We first prove that Builder can obtain either a long blue path or a path of type~A by greedily extending~$e$.
\begin{lem}
\label{lem:findA-path}
Let $m\in\N$ and let $e$ be a blue edge.
Then Builder can force Painter to construct one of the following:
\begin{enumerate}
\item a path $xySz$ of type~A with $e(S) = t$ while uncovering $t$ edges for some $1 \le t <  m$.
\item a blue path of length $m$ while uncovering $m-1$ edges.
\end{enumerate}
\end{lem}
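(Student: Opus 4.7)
The plan is to greedily extend the blue edge $e$ one edge at a time, using a new vertex at each step, and to stop the moment Painter colours any extending edge red (yielding a type~A path) or we reach the target length (yielding outcome~(ii)).

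More concretely, write $e = v_1 v_2$ and proceed by induction on the number of edges Builder has uncovered. Suppose that after some step $i \in \{0, 1, \dots, m-2\}$ Builder has obtained a blue path $v_1 v_2 \dots v_{i+2}$ while uncovering exactly $i$ edges (the base case $i = 0$ being the given blue edge $e$). If $i = m-2$, then this blue path already has length $m-1+1 = \ldots$ wait, it has length $i+1 = m-1$; so Builder performs one more extension step. In general, at step $i+1$ Builder picks a new vertex $v_{i+3}$ and chooses the edge $v_{i+2} v_{i+3}$. If Painter colours it blue, Builder has a blue path $v_1 \dots v_{i+3}$ of length $i+2$ constructed with $i+1$ edges uncovered, so the induction continues. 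If Painter colours it red, then $v_{i+3} v_{i+2} \dots v_1$ is a path of type~A with $x = v_{i+3}$, $y = v_{i+2}$, $S = v_{i+2}\dots v_1$ and $z = v_1$; here $e(S) = i+1$, and Builder has uncovered exactly $i+1$ edges, so (i) holds with $t = i+1$.

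Builder repeats this extension as long as the resulting blue path has length strictly less than $m$. The process must terminate in at most $m-1$ rounds: if Painter colours every edge blue, then after $m-1$ rounds Builder has a blue path $v_1 \dots v_{m+1}$ of length $m$, achieving~(ii); if Painter colours some edge red for the first time at step $i+1$ with $0 \le i \le m-2$, then (i) holds with $t = i+1 \in \{1, \dots, m-1\}$. In either case, the edge count matches the bounds stated in the lemma, and the non-triviality requirement $e(S) \ge 1$ on the type~A path is automatic from $t \ge 1$.

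There is no serious obstacle here: the argument is a routine greedy extension, and the only subtlety is making sure $S$ is non-trivial, which is guaranteed because the first step already produces either a blue $P_3$ (continuing the induction) or a type~A path with $e(S) = 1$ (hitting outcome~(i) with the smallest allowed value of $t$).
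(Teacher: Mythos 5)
Your proposal is correct and takes essentially the same approach as the paper: greedily extend the blue edge one step at a time with a new vertex, stopping at the first red extension edge (which immediately yields a type~A path with $S$ the blue path built so far) or when the blue path reaches length $m$. The only differences from the paper's proof are cosmetic (you append to one end of the path while the paper prepends, and your step index is offset by one), and the bookkeeping of $t$ and the edge counts matches.
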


\begin{proof}
Let $S_{1}$ be the blue path formed by $e$. Builder proceeds to
extend $S_{1}$ greedily until either Builder has constructed a blue path of length
$m$ or Painter has coloured an edge red. 

Indeed, suppose $S_{i}$ is a blue path of length $i$ for some $1\le i\le m-1$
with endpoints $y$ and $z$, and that Builder has uncovered $i-1$
edges in forming $S_{i}$ from~$S_{1}$. Then Builder chooses the
edge $xy$, where $x$ is a new vertex. 
If Painter colours $xy$ red then $xyS_iz$ is a path of type~A with $e(S_i) = i$, where $1\le i< m$. Moreover, Builder has uncovered $i$ edges in constructing it, and so we have achieved (i).
If instead Painter colours $xy$ blue, then $S_{i+1} := xyS_iz$ is a blue path of length $i+1$ and Builder has uncovered $i$ edges in constructing it.

By repeating this process, Builder must either obtain a path of type
A as in (i) or a blue path $S_{m}$ of length $m$ as in (ii).
\end{proof}

We now prove that Builder can use a path of type~A to efficiently
extend~$Q$.
Recall that we were given two independent blue edges, $e$ and $f$, and that we have already used $e$ to construct a path of type~A.

\begin{lem}\label{lem:useA-path}
Suppose $Q$ is a non-trivial blue path with endpoints $a$ and $b$, where $b$ is incident to a red edge $bc$.%
\COMMENT{We should be OK in the case $a=c$.}
Suppose $xySz$ is a path of type~A which is vertex-disjoint from $V(Q)\cup\{c\}$.
Further suppose that $f = vw$ is a blue edge vertex-disjoint from $V(Q)\cup V(xySz) \cup \{c\}$.
Then Builder can force Painter to construct one of the following:
\begin{enumerate}
\item a blue path $Q'$ of length $e(Q)+e(S)+2$ with one endpoint $b'$ incident to a red edge $b'c'$ while uncovering $2$ edges.
Moreover, $f$ is vertex-disjoint from $V(Q')\cup \{c'\}$.

\item a blue path $Q'$ of length $e(Q)+e(S)+4$ with one endpoint
incident to a red edge $b'c'$ while uncovering $4$ edges. (Note that $f$ need not be vertex-disjoint from $V(Q')\cup \{c'\}$.)

\item a red $P_{4}$ while uncovering at most $4$ edges.
\end{enumerate}
\end{lem}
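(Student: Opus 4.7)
The plan is to give Builder an explicit four-edge strategy, split into two phases, and to verify each Painter response case by case.

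\textbf{Phase 1 (two edges).} Builder first chooses $ay$, then $xb$. If Painter colours both blue, the path $xbQayS z$ is a blue path of length $e(Q)+e(S)+2$ whose endpoint $x$ is incident to the red edge $xy$; setting $b':=x$ and $c':=y\in V(S)\subseteq V(Q')$, the hypothesis that $f$ is disjoint from $V(Q)\cup V(xySz)\cup\{c\}$ immediately gives that $f$ is vertex-disjoint from $V(Q')\cup\{c'\}=V(Q)\cup V(S)\cup\{x\}$, yielding case (i). If $xb$ is red, then $cbxy$ (using the red edges $cb$, $bx$, $xy$) is a red $P_4$ regardless of $ay$'s colour, yielding case (iii). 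The only remaining case is $ay$ red and $xb$ blue; here the red $P_3$ $xya$ is established.

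\textbf{Phase 2 (two more edges).} Assuming $ay$ red and $xb$ blue, Builder next chooses $az$. If $az$ is red, then $xyaz$ is a red $P_4$ (edges $xy,ya,az$), case (iii); otherwise $az$ is blue, giving a blue path $xbQazSy$ of length $e(Q)+e(S)+2$. Builder then chooses $xv$. If $xv$ is red, then $vxya$ is a red $P_4$ (edges $vx,xy,ya$), case (iii). Otherwise $xv$ is blue, and the resulting blue path $Q':=ySzaQbxvw$ has length $e(S)+1+e(Q)+1+1+1=e(Q)+e(S)+4$, with endpoint $y$ incident to the red edge $xy$ (where $x\in V(Q')$), giving case (ii).

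\textbf{Main obstacle.} The delicate part is choosing the Phase 2 edges so that each one simultaneously (a) threatens a red $P_4$ if Painter colours it red and (b) slots cleanly into a longer blue path if Painter colours it blue. My choices $az$ and $xv$ are dictated by (a): each is incident to an endpoint of the red $P_3$ $xya$ (namely $a$ and $x$), so a red colouring automatically completes a $P_4$ through $xy$ and $ya$. In the all-blue scenario, (b) works because $az$ bridges $Q$ to $S$ at the vertex $a$, and $xv$ lets us tack on the pre-existing blue edge $f=vw$, supplying exactly the two extra blue edges (beyond $Q$, $S$, $xb$, $az$) needed to hit length $e(Q)+e(S)+4$; the crucial structural point is that $y$ remains available as the red-incident endpoint of $Q'$ because the routing enters $S$ from its other end $z$.
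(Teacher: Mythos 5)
Your proof is correct, and it takes essentially the same approach as the paper's: an explicit four-edge Builder strategy with a case analysis, where two blue responses to a pair of "bridge" edges yield case (i), a red response on the key edge yields a red $P_4$ immediately, and the remaining branch builds up a red $P_3$ centred at $y$ and then forces two further blue edges to route through $f$, giving case (ii). The paper opens with $ax$ and then $by$ (respectively $av,wy,xb$), while you open with $ay$ and $xb$ (then $az,xv$); the specific edges differ but the structure, the length bookkeeping, and the fact that $c'\in V(Q')$ in case (ii) all match the paper's proof.
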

\begin{proof}

Builder chooses the edge $ax$.
First suppose Painter colours $ax$ blue. Builder then chooses the
edge $by$. If Painter colours the edge $by$ red, then $cbyx$ is a red $P_{3}$
and we have achieved (iii). Suppose not. Then $Q': = xaQbySz$ (see Figure~\ref{fig:useA-path1}) is a blue
path of length $e(Q)+e(S)+2$, where $x$ is incident to the red edge
$xy$, and we have achieved (i).

Now suppose Painter instead colours $ax$ red. Builder then chooses
the edges $av$, $wy$ and $xb$. If Painter colours any of these
edges red, then $yxav$, $wyxa$ or $yxbc$ respectively is a red
$P_{4}$ and we have achieved (iii). Suppose not. Then $Q': = xbQavwySz$ (see Figure~\ref{fig:useA-path2})
is a blue path of length $e(Q)+e(S)+4$, where $x$ is incident to
the red edge $xy$, and we have achieved (ii). 
\end{proof}

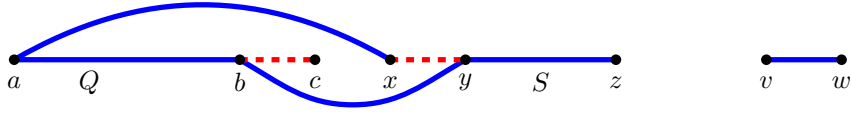
\begin{figure}[tp]
\centering
\begin{tikzpicture}
			\begin{scope}[line width=2pt, blue]
			\draw (7,0) to (9,0);
			\draw (1,0) to (4,0);
			\draw (11,0) to (12,0);

			\draw (1,0) to [bend left] (6,0); %ax			
			\draw (4,0) to [out=330, in=180] (5.5, -0.6) to [out=0, in=210] (7,0); %by
			\end{scope}
			
			\begin{scope}[line width=2pt, red, dashed]
			\draw (6,0) to (7,0); %xy
			\draw (4,0) to (5,0); %bc
			\end{scope}

			\fill (5,0) circle (2pt);
			\fill (1,0) circle (2pt);
			\fill (4,0) circle (2pt);
			
			\fill (6,0) circle (2pt);
			\fill (7,0) circle (2pt);
			\fill (9,0) circle (2pt);
			
			\fill (11,0) circle (2pt);
			\fill (12,0) circle (2pt);
		
			\node at (5,-0.3) {$c$};
			\node at (1,-0.3) {$a$};
			\node at (4,-0.3) {$b$};
			\node at (2,-0.3) {$Q$};
	
			\node at (6,-0.3) {$x$};
			\node at (7,-0.3) {$y$};
			\node at (8,-0.3) {$S$};
			\node at (9,-0.3) {$z$};
			
			\node at (11,-0.3) {$v$};
			\node at (12,-0.3) {$w$};
\end{tikzpicture}
\caption{Extending $Q$ using a path of type~A as in Lemma~\ref{lem:useA-path}(i).}
\label{fig:useA-path1}
\end{figure}

\begin{figure}[tp]
\centering
\begin{tikzpicture}
			\begin{scope}[line width=2pt, blue]
			\draw (7,0) to (9,0);%ySz
			\draw (2,0) to (5,0);%bQa
			\draw (5.5,1) to (6.5,1);%vw

			\draw (5,0) to (5.5,1); %av
			\draw (6.5,1) to (7,0); %wy
			\draw (2,0) to [out=330, in=180] (4,-0.6) to [out=0, in=210] (6,0); %bx
			\end{scope}
			
			\begin{scope}[line width=2pt, red, dashed]
			\draw (5,0) to (7,0); %axy
			\draw (1,0) to (2,0); %cb
			\end{scope}

			\fill (1,0) circle (2pt);
			\fill (2,0) circle (2pt);
			\fill (5,0) circle (2pt);
			
			\fill (6,0) circle (2pt);
			\fill (7,0) circle (2pt);
			\fill (9,0) circle (2pt);
			
			\fill (5.5,1) circle (2pt);
			\fill (6.5,1) circle (2pt);
		
			\node at (1,-0.3) {$c$};
			\node at (5,-0.3) {$a$};
			\node at (2,-0.3) {$b$};
			\node at (3.5,-0.3) {$Q$};
	
			\node at (6,-0.3) {$x$};
			\node at (7,-0.3) {$y$};
			\node at (8,-0.3) {$S$};
			\node at (9,-0.3) {$z$};
			
			\node at (5.5,1.3) {$v$};
			\node at (6.5,1.3) {$w$};
\end{tikzpicture}
\caption{Extending $Q$ using a path of type~A and an blue independent edge $vw$ as in Lemma~\ref{lem:useA-path}(ii).}
\label{fig:useA-path2}
\end{figure}
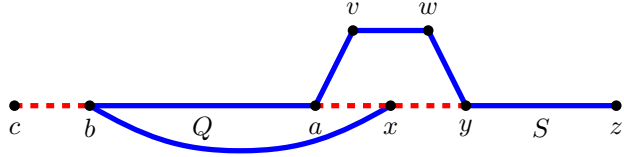

We now consolidate Lemmas~\ref{lem:findA-path} and~\ref{lem:useA-path} into a single lemma which says that given
two independent blue edges, Builder can efficiently extend $Q$. 
In applying Lemma~\ref{lem:useblueedges}, we will take $m$ to be $\ell - e(Q) - e(R) - 1$. 
Thus if we can extend $Q$ by at least $m$ edges, then we can join $Q$ and $R$ to obtain a blue $P_{\ell+1}$ immediately afterwards.

\begin{lem}
\label{lem:useblueedges}
Let $m \in \mathbb{N}$.
Suppose $Q$ is a non-trivial blue path with endpoints~$a$ and~$b$, where $b$ is incident to a red edge $bc$.
Suppose $e$ and $f$ are two independent blue edges which are vertex-disjoint from $V(Q)\cup\{c\}$.
Then Builder can force Painter to construct one of the following:
\begin{enumerate}
\item a blue path $Q'$ with $e(Q') = e(Q) + \ell'$ for some $3 \le \ell' \le m+3$ such that $Q'$ has an endpoint $b'$ incident to a red edge $b'c'$. 
A total of $\ell'$ edges are uncovered in the process.
Moreover, if $\ell' < 5 \le m$,
then $f$ is vertex-disjoint from $V(Q') \cup \{c'\}$.

\item a red $P_4$ while uncovering at most $m+3$ edges.
\end{enumerate}
\end{lem}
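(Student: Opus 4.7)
The plan is to combine three ingredients: Lemma~\ref{lem:findA-path} (which grows $e$ into either a long blue path or a type~A path), Lemma~\ref{lem:useA-path} (which extends $Q$ using a type~A path and the second blue edge $f$), and Lemma~\ref{lem:joinpaths} (which extends $Q$ using any vertex-disjoint blue path). Builder will first try to grow $e$ greedily; the way Painter responds decides which extension lemma we invoke.

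Concretely, I would have Builder apply Lemma~\ref{lem:findA-path} starting from $e$ with parameter $m$, choosing all new vertices so that they avoid $V(Q)\cup\{c\}\cup V(f)$. This produces one of two outcomes. In the first, Builder obtains a path $xySz$ of type~A with $e(S)=t$ for some $1\le t<m$, using exactly $t$ new edges. By construction $xySz$ is vertex-disjoint from $V(Q)\cup\{c\}\cup V(f)$, so Builder may invoke Lemma~\ref{lem:useA-path} with $Q$, $xySz$, and $f=vw$. Its three outcomes translate directly into our desired conclusions: case (i) gives a blue extension by $t+2$ edges in 2 further moves, preserving $f$; case (ii) gives an extension by $t+4$ edges in 4 further moves; case (iii) yields a red $P_4$ in at most 4 further moves. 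The total move counts are then $t+2$, $t+4$, or at most $t+4\le m+3$, matching the bounds in conclusions (i) and (ii). Note that $\ell'<5$ only occurs under case (i) of Lemma~\ref{lem:useA-path}, and precisely there the disjointness of $f$ from $V(Q')\cup\{c'\}$ is guaranteed, which gives the ``moreover'' clause.

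In the second outcome of Lemma~\ref{lem:findA-path}, Builder has a blue path $P$ of length $m$ uncovered in $m-1$ moves, vertex-disjoint from $V(Q)\cup\{c\}$. Builder then applies Lemma~\ref{lem:joinpaths} with $Q$ and $P$, using at most 2 more edges, and obtains either a blue path $Q'$ of length $e(Q)+m+1$ with an endpoint incident to a red edge (so $\ell'=m+1$, and the total move count is $m+1\le m+3$), or a red $P_4$ using at most $m+1\le m+3$ moves. Since $\ell'=m+1\ge 5$ whenever $m\ge 4$, the moreover clause is vacuous in this branch whenever $m\ge 5$, and the bound $\ell'\ge 3$ holds provided $m\ge 2$.

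The main obstacle is pure bookkeeping: tracking vertex-disjointness through the chain of sub-lemmas (in particular ensuring that the second blue edge $f$ still satisfies the hypotheses of Lemma~\ref{lem:useA-path} when needed, and that the extension path constructed by Lemma~\ref{lem:findA-path} avoids both $V(Q)\cup\{c\}$ and the endpoints of $f$), together with verifying that the uncovered-edge counts fit within $[\ell',\ell']$ in the success cases and within $m+3$ in the failure case. The low value $m=1$ is a corner case where Lemma~\ref{lem:findA-path}(i) is vacuous and (ii) gives only $\ell'=2$; it can be handled separately by applying Lemma~\ref{lem:joinpaths} once with $e$ and then again with $f$, yielding $\ell'=4$ in exactly $m+3=4$ moves.
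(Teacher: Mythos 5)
Your proof is correct and takes essentially the same approach as the paper: apply Lemma~\ref{lem:findA-path} to $e$ with parameter $m$, then Lemma~\ref{lem:useA-path} in the type-A branch or Lemma~\ref{lem:joinpaths} in the long-path branch, with the same bookkeeping of uncovered edges. You are also right that the long-path branch only yields $\ell'=m+1\ge 3$ when $m\ge 2$, a corner case the paper's proof glosses over (harmlessly, since the lemma is only ever invoked with $m\ge 9$), and your two-step application of Lemma~\ref{lem:joinpaths} for $m=1$ is a correct patch.
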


\begin{proof}
We apply Lemma~\ref{lem:findA-path} to $e$ and $m$, and split into
cases depending on Painter's choice.

\medskip\noindent \textbf{Case 1:} As in Lemma~\ref{lem:findA-path}(i),
we obtain a path $xySz$ of type~A with $e(S) = t$ for some $1 \le t < m$ which
is vertex-disjoint from $V(f)\cup V(Q)\cup \{c\}$, while uncovering $t$ edges.

We apply Lemma~\ref{lem:useA-path} to $Q$, $xySz$ and $f$.
First suppose that as in Lemma~\ref{lem:useA-path}(i), we obtain a blue path $Q'$ of length $e(Q)+t+2$ with one endpoint incident to a red edge while preserving $f$'s independence.
In total we have uncovered $t+2$ edges.
Hence $Q'$ satisfies (i) on setting $\ell'=t+2$. 

Now suppose that as in Lemma~\ref{lem:useA-path}(ii), we obtain a blue path $Q'$ of length $e(Q)+t+4$ with one endpoint incident to a red edge.
We have uncovered $t+4$ edges in total.
Hence setting $\ell'=t+4$, we have achieved~(i) with $\ell' \ge 5$.

Finally, suppose that as in Lemma~\ref{lem:useA-path}(iii) we obtain a red $P_4$. Then we have uncovered at most $t+4 \le m+3$ edges in total and so we have achieved (ii).

\medskip\noindent \textbf{Case 2:} As in Lemma~\ref{lem:findA-path}(ii),
we obtain a blue path $S$ of length $m$ which is vertex-disjoint
from $V(Q)\cup \{c\}$ while uncovering $m-1$ edges.

We apply Lemma~\ref{lem:joinpaths} to $Q$ and $S$ to construct either a blue path $Q'$ of length $e(Q)+m+1$ with one endpoint incident to a red edge or a red $P_{4}$ while
uncovering at most 2 additional edges.
We have uncovered at most $m+1$
edges in total. Hence in the former case we have achieved~(i), and
in the latter case we have achieved~(ii). 
\end{proof}

%%%%%%%%%%%%%%%%%%%%%%%%%%%%%%%%%%%%%%%%%%%%%%%%%%%%%%%%%%%%%%%%%%%%%%%%%%%%%%%%%%%%%%%%
\subsection{\label{sub:red} Extending $Q$ and $R$ using two red edges $e$ and $f$.}

In this subsection, our aim is to extend $Q$ or $R$ efficiently when given two independent red edges $e$ and~$f$ -- see Lemma~\ref{lem:userededges}.
As in Section~\ref{sub:blue}, it will be convenient to define some special paths that we will use in the extension process. 
These paths can be viewed as analogues of paths of type~A.

\begin{defn}\label{def:typeB}
A path $vwxyz$ is \emph{of type~B} if $vw$ and $yz$ are red edges, and $wx$ and $xy$ are blue edges.
\end{defn}
\begin{defn}\label{def:typeC}
A path $T_{1}\ldots T_{k}$ is \emph{of type~C} if the following statements hold:
\begin{enumerate}
\item[(C1)] $k$ is odd and $k\ge3$.
\item[(C2)] $T_{1}$ is either a blue edge or a path of the form $x_1y_1z_1$, where
$z_1\in V(T_{2})$ and $y_1z_1$ is red (and $x_1y_1$ may be red or blue).
\item[(C3)] $T_{k}$ is either a blue edge or a path of the form $x_k y_k z_k$, where
$x_k\in V(T_{k-1})$ and $x_k y_k$ is red (and $y_k z_k$ may be red or blue).
\item[(C4)] $T_{2},T_{4},\ldots,T_{k-1}$ are blue paths. Exactly
one of these paths has length~$1$ and the rest have length~$2$.
\item[(C5)] $T_{3},T_{5},\ldots,T_{k-2}$ are all red $P_{3}$'s.
\end{enumerate}
We say $T_{1}\dots T_{k}$ is \emph{incomplete} if $T_1$ or $T_k$ is a red~$P_3$.
Otherwise, we say $T_{1}\dots T_{k}$ is \emph{complete}.
\end{defn}
For the remainder of the section, if we refer to a path $vwxyz$ of type~B or a path $T_1\dots T_k$ of type~C, we shall take it as read that $v,w,x,y,z$ and $T_1,\dots,T_k$ are as in Definitions~\ref{def:typeB} and~\ref{def:typeC} respectively. Note that paths of type~C are well-defined with respect to direction of traversal -- if $v_1 \dots v_p$ is a path of type
C, then so is $v_p \dots v_1$.%
\COMMENT{AL: had `$T_{1}\dots T_{k}$ is a path of type~C, then so is $T_{k}\dots T_{1}$' previously, but $T_{k}\dots T_{1}$ is not the reverse of $T_{1}\dots T_{k}$.}
See Figure~\ref{fig:typeC} for an example of a path of type~C.
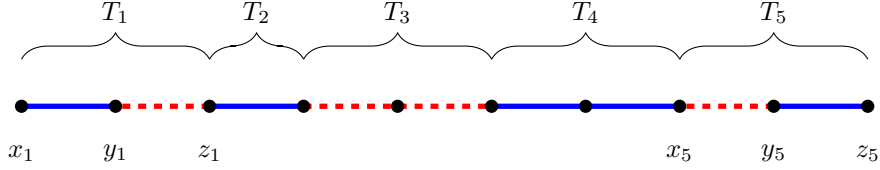
\begin{figure}[tp]
\centering
\begin{tikzpicture}[scale=1.25]
			\begin{scope}[line width=2pt, blue]
			\draw (3,0) to (4,0);
			\draw (6,0) to (8,0);
			\draw (1,0) to (2,0);
			\draw (9,0) to (10,0);
			\end{scope}
			
			\begin{scope}[line width=2pt, red, dashed]
			\draw (4,0) to (6,0);
			\draw (3,0) to (2,0);
			\draw (9,0) to (8,0);
			\end{scope}
			
			\begin{scope}[start chain]
			\foreach \i in {1,2,...,10}
			\fill (\i,0) circle (2pt);
			\end{scope}
			
			\node at (1,-0.5) {$x_1$};
			\node at (2,-0.5) {$y_1$};
			\node at (3,-0.5) {$z_1$};
			
			\node at (8,-0.5) {$x_5$};
			\node at (9,-0.5) {$y_5$};
			\node at (10,-0.5) {$z_5$};
			
			\draw [decorate,decoration={brace,amplitude=10pt}] (1,0.5) -- (3,0.5) node [above,midway,yshift=10pt] {$T_1$};
			\draw [decorate,decoration={brace,amplitude=10pt}] (3,0.5) -- (4,0.5) node [above,midway,yshift=10pt] {$T_2$};
			\draw [decorate,decoration={brace,amplitude=10pt}] (4,0.5) -- (6,0.5) node [above,midway,yshift=10pt] {$T_3$};
			\draw [decorate,decoration={brace,amplitude=10pt}] (6,0.5) -- (8,0.5) node [above,midway,yshift=10pt] {$T_4$};
			\draw [decorate,decoration={brace,amplitude=10pt}] (8,0.5) -- (10,0.5) node [above,midway,yshift=10pt] {$T_5$};
\end{tikzpicture}
\caption{A complete path $T_{1}\ldots T_{5}$ of type~C.}
\label{fig:typeC}
\end{figure}

We now sketch the proof of Lemma~\ref{lem:userededges}. Let $e$ and $f$ be two independent red edges. Using these edges, Builder can force either a path of type~B or a path of type~C using Lemma~\ref{lem:findBC-paths}. If Builder obtains a path $vwxyz$ of type~B, they will apply Lemma~\ref{lem:useB-path} to efficiently extend $Q$ using $vwxyz$.

Suppose instead Builder obtains a path $T_1\dots T_k$ of type~C. Then we run into a problem -- $T_1\dots T_k$ is not complete, and only a complete path of type~C may be used to efficiently extend $R$ (see Lemma~\ref{lem:useD-path}). Builder will therefore use Corollary~\ref{cor:extendC-path} to extend $T_1\dots T_k$ into a path $T_1'\dots T'_{k'}$ of type~C which is either complete or arbitrarily long. Builder then uses Lemma~\ref{lem:useD-path} to extend $R$ using $T'_1\dots T'_{k'}$. If $T_1'\dots T'_{k'}$ is complete, this extension is efficient; otherwise, Builder wins the game immediately afterwards by joining $Q$ and the resulting blue path. Thus an incomplete path of type~C is used to extend $R$ at most once over the course of the game, adding only constantly many rounds to the game's length.\COMMENT{These two paragraphs are the rephrasing we discussed. Feel free to revert if you prefer the old version. -JAL 15/10}

We first prove that given two independent
red edges Builder can force either a path of type~B or a path of type~C. 

\begin{lem}
\label{lem:findBC-paths}Given two independent red edges $e$ and $f$,
Builder can force Painter to construct one of the following:
\begin{enumerate}
\item a path of type~B while uncovering $2$ edges;
\item an incomplete path $T_1T_2T_3$ of type~C and length $5$ while uncovering
$3$ edges;
\item a red $P_{4}$ while uncovering $2$ edges.
\end{enumerate}
\end{lem}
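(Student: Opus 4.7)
The plan is to have Builder attempt to bridge the two given red edges with a length-two blue path through a single new vertex, and to recover from Painter's obstruction with at most one additional edge. Label $e = v_1 w_1$ and $f = v_2 w_2$, and let $x$ be a new vertex. Builder first claims the two edges $w_1 x$ and $x w_2$.

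This splits into three subcases depending on Painter's responses. If Painter colours both $w_1 x$ and $x w_2$ blue, then $v_1 w_1 x w_2 v_2$ is immediately a path of type B, achieving (i) after $2$ uncovered edges. If Painter colours both red, then together with $e$ the path $v_1 w_1 x w_2$ is a red $P_4$, achieving (iii) after $2$ uncovered edges. Note that choosing the two edges independently also ensures that $x$ has no prior red neighbours, so these are the only red paths on these vertices.

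The interesting case is when Painter plays one of each colour; by symmetry we may assume $w_1 x$ is blue and $x w_2$ is red. Now $x w_2 v_2$ is a red $P_3$, which will serve as the $T_3$ segment of a type-C decomposition. Builder then claims one more edge $u v_1$ for a new vertex $u$, using a total of $3$ edges. Regardless of how Painter colours $u v_1$, the resulting length-$5$ path $u v_1 w_1 x w_2 v_2$ decomposes as $T_1 = u v_1 w_1$ (length $2$ with $v_1 w_1$ red connecting to $T_2$), $T_2 = w_1 x$ (a single blue edge), and $T_3 = x w_2 v_2$ (a red $P_3$). One then checks (C1)--(C5) are satisfied with $k = 3$, and the path is incomplete because $T_3$ is a red $P_3$, achieving (ii).

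The main obstacle is purely bookkeeping: one must verify the type-C conditions carefully, in particular observing that the definition of $T_1$ allows it to be a red $P_3$ (since $x_1 y_1$ is permitted to be either colour), so that Painter's choice of colour for $u v_1$ never threatens the outcome. Once this is noted, no nontrivial strategy for Painter can avoid one of the three prescribed configurations.
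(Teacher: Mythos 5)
Your proof is correct and takes essentially the same approach as the paper: bridge the two red edges with two new edges through a new vertex, and in the mixed-colour case add one more edge to complete an incomplete type-C path. Up to renaming of vertices, this is exactly the paper's argument.
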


\begin{proof}
Write $e=uv$ and $f=xy$. Builder chooses the edges $vw$ and $wx$, where $w$ is a new vertex.
If Painter colours both edges red, then $uvwx$ is a red $P_{4}$ and we have achieved~(iii).
Suppose without loss of generality that Painter colours $vw$ blue. If Painter also colours $wx$ 
blue, then $uvwxy$ is a path of type~B and we have achieved~(i). 
If instead Painter colours $wx$ red, then Builder chooses the edge~$tu$.
However Painter colours $tu$, $tuvwxy$ is now a
path of type~C and length~5, taking $T_1 = tuv$, $T_2= vw$ and $T_3 = wxy$.
Moreover, $T_3$ is a red~$P_3$, so $T_1T_2T_3$ is incomplete and we have achieved~(ii).
\end{proof}

We next prove that Builder can use a path of type~B to efficiently extend $Q$.

\begin{lem}
\label{lem:useB-path}Suppose $Q$ is a non-trivial blue path with endpoints~$a$ and~$b$, where~$b$ is incident to a red edge $bc$.
Suppose $vwxyz$ is a path of type~B vertex-disjoint from $V(Q) \cup \{c\}$.
Then, by uncovering at most $3$ edges, Builder can force Painter to construct one of the following: 
\begin{enumerate}
\item a blue path $Q'$ of length $e(Q)+5$ with one endpoint $b'$ incident to a red edge~$b'c'$.
\item a red $P_{4}$.
\end{enumerate}
\end{lem}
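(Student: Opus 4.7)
The plan is to construct a blue path $Q' = aQbvzwxy$ of length $e(Q)+5$ whose endpoint $y$ is incident to the existing red edge $yz$. Builder will uncover the three new edges $bv$, $vz$, $zw$ in sequence, and each will be forced blue because colouring it red would complete a red $P_4$ using only the pre-existing red edges $bc$, $vw$, $yz$ supplied by the hypothesis on $Q$ and the definition of a type-B path.

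In detail: Builder first chooses $bv$. If Painter colours $bv$ red, then $cbvw$ (with red edges $cb$, $bv$, $vw$) is a red $P_4$ and we achieve (ii); otherwise $bv$ is blue. Builder next chooses $vz$. If Painter colours $vz$ red, then $wvzy$ (with red edges $wv$, $vz$, $zy$) is a red $P_4$ and we achieve (ii); otherwise $vz$ is blue. Finally Builder chooses $zw$. If Painter colours $zw$ red, then $yzwv$ (with red edges $yz$, $zw$, $wv$) is a red $P_4$ and we achieve (ii); otherwise $zw$ is blue. In each case the red $P_4$ is a simple path, because $c \notin V(vwxyz)$ by hypothesis and $v,w,x,y,z$ are pairwise distinct.

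If all three new edges are blue then $Q' := aQbvzwxy$ is a simple blue path, since $V(Q)$ and $V(vwxyz)$ are disjoint by hypothesis. Its length is $e(Q) + 3 + 2 = e(Q) + 5$, where the $3$ is contributed by the new edges $bv, vz, zw$ and the $2$ by the blue edges $wx, xy$ of the type-B path. Its endpoint $y$ is incident to the red edge $yz$, so (i) is achieved with $b' = y$ and $c' = z$. Builder has uncovered at most three edges, as required.

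The only non-routine step is choosing the right target path. The key observation is that the three pre-existing red edges $bc$, $vw$, $yz$ can be paired to anchor three independent red $P_4$ threats precisely along the path $b$--$v$--$z$--$w$, which is why we route $Q'$ through $v$ and $z$ rather than through new vertices. A new vertex would be incident to no red edge and so could not anchor any $P_4$ threat, which is why a more economical arrangement using fewer vertices of $vwxyz$ seems not to be available.
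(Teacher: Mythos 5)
Your proof is correct and takes essentially the same approach as the paper: the paper chooses the three edges $bv$, $vy$, $wz$ to build $aQbvyxwz$, while you choose $bv$, $vz$, $zw$ to build $aQbvzwxy$, but in both cases the three pre-existing red edges $bc$, $vw$, $yz$ anchor the red $P_4$ threats that force the new edges blue. The two constructions are interchangeable minor variants.
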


\begin{proof}
Builder chooses the edges $bv$, $vy$ and $wz$. If Painter colours any of these
edges red, then $cbvw$, $wvyz$ or $vwzy$ respectively is a red
$P_{4}$ and we have achieved~(ii). Otherwise, $aQbvyxwz$ is a blue
path of length $e(Q)+5$, where $z$ is incident to the red edge $zy$ (see Figure~\ref{fig:useB-path}), and we have achieved (i).
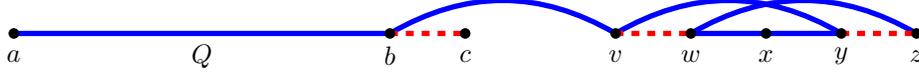
\begin{figure}[tp]
\centering
\begin{tikzpicture}
			\begin{scope}[line width=2pt, blue]
			\draw (10,0) to (12,0);
			\draw (1,0) to (6,0);
			\draw (6,0) to [bend left] (9,0);
			\draw (9,0) to [bend left] (12,0);
			\draw (10,0) to [bend left] (13,0);
			\end{scope}
			
			\begin{scope}[line width=2pt, red, dashed]
			\draw (9,0) to (10,0);
			\draw (12,0) to (13,0);
			\draw (6,0) to (7,0);
			\end{scope}
			
			\begin{scope}[start chain]
			\foreach \i in {9,10,...,13}
			\fill (\i,0) circle (2pt);
 			\end{scope}
 	
			\fill (7,0) circle (2pt);
			\fill (1,0) circle (2pt);
			\fill (6,0) circle (2pt);

			\node at (7,-0.3) {$c$};
			\node at (1,-0.3) {$a$};
			\node at (6,-0.3) {$b$};
			\node at (3.5,-0.3) {$Q$};
	
			\node at (9,-0.3) {$v$};
			\node at (10,-0.3) {$w$};
			\node at (11,-0.3) {$x$};
			\node at (12,-0.3) {$y$};
			\node at (13,-0.3) {$z$};
\end{tikzpicture}
\caption{Extending $Q$ using a path of type~B as in Lemma~\ref{lem:useB-path}.}
\label{fig:useB-path}
\end{figure}
\end{proof}

We now focus on paths of type~C.
We first note the following simple property of such paths, which follows immediately from their definition (Definition~\ref{def:typeC}).

\begin{prop}
\label{prop:lengthbound}
Suppose $T_{1}\dots T_{k}$ is a path of type~C. Then
\[e(T_{1}\dots T_{k}) = 2k-5+e(T_{1})+e(T_{k}).\]
\end{prop}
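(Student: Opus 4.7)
The plan is to verify this by direct counting from the definition (Definition~\ref{def:typeC}). First, I would observe that in the path $T_1 T_2 \dots T_k$, each pair of consecutive $T_i, T_{i+1}$ shares exactly one vertex (their common endpoint, as forced by the compatibility conditions (C2) and (C3) at the ends and by concatenation in the middle), so the edges of the $T_i$'s are pairwise disjoint. Therefore
\[ e(T_1 \dots T_k) = \sum_{i=1}^{k} e(T_i). \]

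Next I would split the sum into even-indexed and odd-indexed pieces. Since $k$ is odd, the even indices $i$ with $2 \le i \le k-1$ are $i = 2, 4, \dots, k-1$, giving $(k-1)/2$ terms. By (C4), exactly one of these has length $1$ and the remaining $(k-1)/2 - 1 = (k-3)/2$ have length $2$, so
\[ \sum_{\substack{2 \le i \le k-1 \\ i \text{ even}}} e(T_i) = 1 + 2 \cdot \tfrac{k-3}{2} = k - 2. \]
The odd indices $i$ with $3 \le i \le k-2$ are $i = 3, 5, \dots, k-2$, giving $(k-3)/2$ terms. By (C5), each such $T_i$ is a red $P_3$ and so has length $2$, giving
\[ \sum_{\substack{3 \le i \le k-2 \\ i \text{ odd}}} e(T_i) = 2 \cdot \tfrac{k-3}{2} = k - 3. \]

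Combining these with $e(T_1)$ and $e(T_k)$, I obtain
\[ e(T_1 \dots T_k) = e(T_1) + (k-2) + (k-3) + e(T_k) = 2k - 5 + e(T_1) + e(T_k), \]
as required. There is no real obstacle here; the only subtlety is confirming that the $T_i$'s overlap only at their shared endpoint (which is immediate from the structure imposed by (C2)--(C5), together with the fact that $T_1 \dots T_k$ is given to be a path and so has no repeated vertices). The case $k = 3$ should also be checked separately, since then the odd-indexed interior sum is empty: indeed, $e(T_1 T_2 T_3) = e(T_1) + e(T_2) + e(T_3) = e(T_1) + 1 + e(T_3) = 2 \cdot 3 - 5 + e(T_1) + e(T_3)$, consistent with the formula.
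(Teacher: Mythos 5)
Your proof is correct, and it simply makes explicit the direct count that the paper declares ``follows immediately'' from Definition~\ref{def:typeC} without writing it out: the even-indexed $T_i$ contribute $1 + 2\cdot\tfrac{k-3}{2} = k-2$ edges by (C4), the interior odd-indexed $T_i$ contribute $2\cdot\tfrac{k-3}{2} = k-3$ edges by (C5), and adding $e(T_1)+e(T_k)$ gives $2k-5+e(T_1)+e(T_k)$. The remark about edge-disjointness and the $k=3$ sanity check are both sound, so nothing is missing.
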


Let $T_1\ldots T_k$ be an incomplete path of type~C.
We first prove an ancillary lemma, which says that Builder can always extend an incomplete
path of type~C into a slightly longer path of type~C.

\begin{lem}
\label{lem:baby-extendC-path-2}Suppose $T_{1}\dots T_{k}$ is an incomplete path
of type~C and length $\ell$. Then Builder
can force Painter to do one of the following:
\begin{enumerate}
\item for some $i\in\{3,4\}$, extend $T_{1}\dots T_{k}$ to a path $T_{1}'\dots T_{k+2}'$
of type~C and length $\ell+i$ while uncovering $i$ edges.

\item construct a red $P_{4}$ while uncovering at most $4$ edges.
\end{enumerate}
\end{lem}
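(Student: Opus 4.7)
The plan is to give a round-by-round strategy for Builder. Since $T_1\dots T_k$ is incomplete, I assume without loss of generality that $T_k=x_ky_kz_k$ is a red $P_3$, extending on the $T_k$ side (the $T_1$ case being symmetric). The key structural observation is that, since (C4) requires exactly one middle blue piece of length~$1$ and this piece already sits among $T_2,\dots,T_{k-1}$, the new blue middle piece $T_{k+1}'$ must be a blue $P_3$ of length~$2$ starting at $z_k$; the new end cap $T_{k+2}'$ is either a blue edge (length~$1$) or a $P_3$ with its first edge red (length~$2$), matching $i\in\{3,4\}$.

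Builder's opening two moves are to pick independent fresh edges $ab$ and $bc$, with $a,b,c$ new vertices. In every subsequent round Builder will pick an edge of the form $z_k{\cdot}$; if Painter colours it red then together with the two red edges of $T_k$ it forms a red $P_4$, so Painter is forced to colour it blue. Based on Painter's colourings of $ab$ and $bc$, Builder proceeds as follows:
\begin{enumerate}
\item both blue: Builder picks $z_ka$; set $T_{k+1}'=z_kab$, $T_{k+2}'=bc$ ($i=3$);
\item $ab$ blue, $bc$ red: Builder picks $z_ka$ then a fresh $cd$; set $T_{k+1}'=z_kab$, $T_{k+2}'=bcd$ ($i=4$);
\item $ab$ red, $bc$ blue: symmetrically, Builder picks $z_kc$ then $aX$ for a new $X$; set $T_{k+1}'=z_kcb$, $T_{k+2}'=baX$ ($i=4$);
\item both red: Builder picks $z_kA$ for a new $A$, then $Aa$; if Painter colours $Aa$ red then $A,a,b,c$ form a red $P_4$ (achieving (ii) in four rounds), otherwise set $T_{k+1}'=z_kAa$, $T_{k+2}'=abc$ ($i=4$), valid because $ab$ red supplies the red initial edge required by (C3).
\end{enumerate}
In each case one checks that the new pieces $T_{k+1}'T_{k+2}'$ use exactly the edges Builder uncovered and respect (C3)--(C5); in particular $T_{k+1}'$ has length $2$, so the new decomposition inherits its unique length-$1$ piece from the original.

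The main obstacle, motivating the above order of moves, is case (iv). A naive strategy of picking $z_ku$ then $uv$ with $u,v$ new allows Painter to colour $uv$ red, after which $z_ku$ cannot be incorporated into any valid $T_{k+1}'$ or $T_{k+2}'$ alongside $uv$ red without wasting an edge: checking the possible placements of $z_ku$ and $uv$ in the new decomposition shows that no length-$i$ extension accommodates both within the four-edge budget. Committing to $ab$ and $bc$ first preserves the flexibility to choose whether to link $a$, $c$, or a further new vertex $A$ to $z_k$; in case (iv) the prebuilt red $P_3$ $abc$ becomes the end cap $T_{k+2}'$ itself, and Builder reaches it via the forced blue $P_3$ $z_kAa$.
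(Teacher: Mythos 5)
Your proof is correct and follows essentially the same strategy as the paper: Builder first claims a potential $P_3$ on three new vertices (your $ab,bc$, the paper's $uv,vw$), then branches on Painter's colouring of those two edges, bringing in one extra intermediate vertex only when both come back red so that the pre-built red $P_3$ can serve as the end cap $T'_{k+2}$. Two small imprecisions do not affect the argument: the edges $ab$ and $bc$ share the vertex $b$, so they are not \emph{independent}, and the claim that ``every subsequent round'' picks an edge of the form $z_k\cdot$ is not literally true (e.g.\ $cd$, $aX$, $Aa$), though the edges that do touch $z_k$ are indeed forced blue.
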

\begin{proof}
Suppose without loss of generality that $T_{k}= x_k y_k z_k$ is a red $P_3$, where $x_k \in V(T_{k-1})$.
Set $T_{i}'=T_{i}$ for $i\le k$.
Then Builder chooses two edges $uv$ and $vw$, where $u,v$ and $w$ are new vertices.

First suppose Painter colours both edges blue.
Then Builder chooses the edge~$z_k u$.
If Painter colours $z_k u$ red, then $ x_k y_k z_k u $
is a red $P_{4}$ and we have achieved~(ii).
If Painter colours $z_k u$ blue, then set $T'_{k+1} = z_k u v$ and $T'_{k+2} = v w$.
Thus, $T_{1}'\dots T_{k+2}'$ is a path of type~C and length $\ell +3$, and we have achieved (i).

Now suppose that Painter colours both $uv$ and $vw$ red.
Then Builder chooses the edges $z_k t$ and $tu$, where $t$ is a new vertex.
If Painter colours one of these edges red, then $x_k y_k z_k t$ or $t u v w$ is a red $P_4$, respectively, and we have achieved (ii).
If Painter colours both $z_k t$ and $tu$ blue, then set $T'_{k+1} = z_k t u$ and $T'_{k+2} = u v w$.
Thus, $T_{1}'\dots T_{k+2}'$ is a path of type~C and length $\ell + 4$, and we have achieved (i).

Finally, suppose without loss of generality that Painter colours $uv$ blue and $vw$ red.
Then Builder chooses the edges $z_k u$ and $wx$, where $x$ is a new vertex.
If Painter colours $z_k u$ red, then $ x_k y_k z_k u $
is a red $P_{4}$ and we have achieved (ii).
If Painter colours $z_k u$ blue, then set $T'_{k+1} = z_k u v$ and $T'_{k+2} = v w x$.
Thus $T_{1}'\dots T_{k+2}'$ is a path of type~C of length $\ell + 4$, however Painter colours $wx$, and we have achieved~(i).
\end{proof}

By applying Lemma~\ref{lem:baby-extendC-path-2} repeatedly, Builder can extend the path $T_1 T_2 T_3$ of type~C given by Lemma~\ref{lem:findBC-paths} into either a complete path of type~C or an arbitrarily long incomplete path of type~C. Recall from Proposition~\ref{prop:lengthbound} that a path $T_1\dots T_k$ of type~C has length at most $2k-1$.

\begin{cor}
\label{cor:extendC-path}
Let $k_0 \ge 5$ be an odd integer.
Suppose $T_1 T_2 T_3$ is an incomplete path of
type~C and length~$5$.
Then Builder can force Painter to do one of the following:
\begin{enumerate}
\item for some $k,\ell \in \N$, extend $T_1 T_2 T_3$ to a complete path $T'_{1}\dots T'_{k}$ of type~C and length~$\ell$ such that $5 \le k \le k_0$, while uncovering $\ell - 5$ edges.
\item for some $\ell \in \N$, extend $T_1 T_2 T_3$ to an incomplete path $T'_{1}\dots T'_{k_0}$ of type~C and length~$\ell$ while uncovering $\ell - 5$ edges.
\item construct a red $P_{4}$ while uncovering at most $2k_0 - 6$ edges.
\end{enumerate}
\end{cor}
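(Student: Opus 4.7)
The plan is to iteratively apply Lemma~\ref{lem:baby-extendC-path-2}, extending the current path by two segments at each step, until the path becomes complete, reaches $k_0$ segments, or Painter is forced to create a red $P_4$.

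More precisely, let $P = T_1\dots T_k$ denote the current path, initially $T_1T_2T_3$ with $k=3$. Builder performs the following loop. If $P$ is complete, stop (achieving (i)); else if $k = k_0$, stop (achieving (ii)); otherwise $P$ is incomplete, so one of $T_1, T_k$ is a red $P_3$, and on relabelling the path in reverse if necessary we may assume $T_k$ is a red $P_3$. Builder then applies Lemma~\ref{lem:baby-extendC-path-2} to $P$: if it produces a red $P_4$, stop (achieving (iii)); otherwise replace $P$ by the extended path, which has $k+2$ segments and length $e(P) + i$ for some $i \in \{3,4\}$, uncovered using $i$ edges. Since $k$ increases by $2$ at each successful iteration and $k_0$ is odd, the loop terminates within at most $(k_0-3)/2$ iterations, and in case (i) we have $k \ge 5$ because the initial path is incomplete, forcing at least one iteration.

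For the edge accounting, each successful iteration uncovers a number of edges equal to the length increase it produces, so in cases (i) and (ii) the total number of edges uncovered by the time the path has reached its final length $\ell$ is exactly $\ell - 5$. In case (iii), if the failure occurs at the $t$-th iteration, then the preceding $t-1$ successful iterations have uncovered at most $4(t-1)$ edges and the failed iteration uncovers at most $4$ more; since the loop is entered only when $k < k_0$, we have $t \le (k_0-3)/2$, giving a total of at most $4t \le 2k_0 - 6$ edges. The only mild subtlety is that when both $T_1$ and $T_3$ are initially red $P_3$'s, a single application of Lemma~\ref{lem:baby-extendC-path-2} cannot make the path complete since the unextended end remains a red $P_3$; the iterative scheme handles this uniformly by always extending at whichever end is currently a red $P_3$, which is legitimate because paths of type~C are symmetric under reversal.
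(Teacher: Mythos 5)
Your proposal is correct and is exactly the argument the paper leaves implicit when it states the corollary after the remark ``By applying Lemma~\ref{lem:baby-extendC-path-2} repeatedly\dots''. The loop invariant, the use of reversal symmetry to always extend at a red-$P_3$ end, the termination bound $t \le (k_0-3)/2$ from the parity of $k_0$, and the exact edge accounting $\ell-5$ in cases (i)--(ii) versus the $4t \le 2k_0-6$ bound in case (iii) are all right.
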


We next prove that Builder can extend $R$ using a path of type~C.

\begin{lem}
\label{lem:useD-path}Suppose $T_{1}\dots T_{k}$ is a path of type~C 
with $k\ge5$ and $e(T_{1}\dots T_{k}) = \ell$. Suppose $R$ is a (possibly trivial) blue path which
is vertex-disjoint from $T_{1}\dots T_{k}$. Then Builder can force
Painter to construct one of the following:
\begin{enumerate}
\item a blue path $R'$ of length $e(R) + (5k-7)/2$ while uncovering $3(k-1)/2$ edges.
This case can only occur if $T_{1}\dots T_{k}$ is incomplete.
\item a blue path $R'$ of length $e(R) + \ell'$ while uncovering at most $7\ell'/5 - \ell$ edges for some $1 \le \ell' \le 5(k - 1)/2$.
This case can only occur if $T_{1}\dots T_{k}$ is complete.
\item a red $P_{4}$ while uncovering at most $3(k-1)/2$ edges.
% Moreover, if $T_{1}\dots T_{k}$ is incomplete, this is achieved by uncovering at most $(3k-7)/2$ edges.
\end{enumerate}
\end{lem}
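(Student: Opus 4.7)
The strategy is an explicit construction exploiting the following basic observation: each red $P_{3}$ $\alpha_{j}\beta_{j}\gamma_{j}$ in $T_{1}\dots T_{k}$ gives Builder a standing red $P_{4}$ threat, in the sense that for any vertex $u\notin\{\alpha_{j},\beta_{j},\gamma_{j}\}$ the edge $u\alpha_{j}$ (or $u\gamma_{j}$) would complete a red $P_{4}$ through $u\alpha_{j}\beta_{j}\gamma_{j}$ (or $\alpha_{j}\beta_{j}\gamma_{j}u$) if Painter coloured it red. So every such edge Builder draws is either forced blue or triggers case~(iii) after only a few moves, and the moment Painter ever colours a forced edge red the game ends in at most $3(k-1)/2$ uncovered edges.

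Using this, Builder extends $R$ block-by-block through $T_{1}\dots T_{k}$. For each interior red $P_{3}$ $P_{i}=T_{2i+1}$, Builder introduces a new vertex $u_{i}$ and draws the pair $\alpha_{i}u_{i},u_{i}\gamma_{i}$ (both forced blue), producing a length-$2$ blue ``bypass'' around $P_{i}$; concatenating these bypasses with the blue subpaths $T_{2},T_{4},\dots,T_{k-1}$ and the endpoint segments $T_{1},T_{k}$ yields a long blue path $L$ living on $V(T_{1}\dots T_{k})\cup\{u_{1},\dots,u_{(k-3)/2}\}$. In the incomplete case~(i), we may assume without loss of generality that $T_{k}=x_{k}y_{k}z_{k}$ is a red $P_{3}$, and Builder attaches $R$ to $L$ at this end via three further forced-blue moves $rx_{k},x_{k}v,vz_{k}$ with $v$ new; two more forced-blue moves exit at the $T_{1}$ end if $T_{1}$ is also a red $P_{3}$. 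Whenever the naive count falls one short, Builder enlarges a single bypass to length $3$ by drawing $\alpha_{i}u_{i},u_{i}w_{i},w_{i}\gamma_{i}$: if Painter colours $u_{i}w_{i}$ blue we gain a blue edge for free, while if Painter colours it red Builder uses the extra red edge to set up further forced-blue edges through $\beta_{i}$, producing a length-$4$ blue path $\gamma_{i}-u_{i}-\beta_{i}-w_{i}-\alpha_{i}$ whose additional cost is absorbed by the budget. A careful tally (three new edges per incomplete end, two per length-$2$ bypass, three or five around the enlarged bypass) produces exactly $(5k-7)/2$ extension with at most $3(k-1)/2$ new edges.

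In the complete case~(ii), neither $T_{1}$ nor $T_{k}$ is a red $P_{3}$, so no forcing is available at the endpoints of $L$; Builder instead enters $L$ at an interior vertex $\alpha_{1}$ (or $\gamma_{(k-3)/2}$), which is forced blue by the nearest red $P_{3}$, and extends greedily. The final value of $\ell'$ is then determined adaptively by Painter's responses at the unforced ``probe'' edges used to attach $R$ and to extend past the ends of $L$. The main technical obstacle is precisely here: one must verify, by a detailed sub-case analysis on Painter's choices at each unforced step, that in every terminating configuration the number of uncovered edges is at most $7\ell'/5-\ell$ for the resulting $\ell'$. The $5/7$ ratio is tight when Painter cooperates on each attachment edge, so the accounting must show that whenever Painter denies an extension the strategy terminates sooner and the dropped blue edges are matched by correspondingly unused new edges.
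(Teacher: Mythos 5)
The central construction in your proposal—forced-blue ``bypasses'' $\alpha_iu_i,u_i\gamma_i$ around each interior red $P_3$, concatenated with the blue segments $T_2,T_4,\dots,T_{k-1}$—is a genuine part of the paper's proof (it is the set $F_1$), and your observation about standing red $P_4$ threats is the right forcing mechanism. However, this alone does not get close to the required length, and the gap is not one you can close with the constant number of extra moves you describe.

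Count edges. The bypasses give $|F_1|=k-3$ uncovered edges, and together with the blue $T_{2i}$'s they produce a blue path of length $e(R)+2k-5$ once $R$ is threaded in. The target in case (i) is $e(R)+(5k-7)/2$, a shortfall of $(k+3)/2$, while the remaining budget is $3(k-1)/2-(k-3)=(k+3)/2$. So you need $\Theta(k)$ further forced-blue edges that all enter the final path, not the $O(1)$ that attaching $R$, handling the $T_1$ end, and ``enlarging one bypass'' provide. The paper supplies exactly these via a second forced batch you do not mention: the \emph{spine} $F_2=\{y_3y_5,\,y_5y_7,\dots,y_{k-4}y_{k-2}\}$ joining the middle vertices of the red $P_3$'s. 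Each $y_iy_{i+2}$ is forced blue (else $z_iy_iy_{i+2}x_{i+2}$ is a red $P_4$), so $F_2$ yields an extra blue path $S_2$ of length $(k-5)/2$ disjoint from the bypass path $S_1$, and $S_1$ and $S_2$ are then spliced together with four additional forced edges. Without the $y_i$-spine the numbers simply do not work for $k\ge 9$, so your proposal has a genuine gap rather than just missing details.

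For the complete case (ii), your proposal explicitly defers the argument (``one must verify, by a detailed sub-case analysis\dots''). In the paper this case is not an adaptive greedy extension but a deterministic splice of $S_1$ and $S_2$ with two or three additional forced edges, split into three sub-cases according to whether $T_1$ and $T_k$ are blue edges or red-then-blue $P_3$'s; the resulting $\ell'$ is one of $(5k-9)/2$, $(5k-7)/2$, $(5k-5)/2$, and the bound $7\ell'/5-\ell$ is then verified by direct computation using $\ell=2k-5+e(T_1)+e(T_k)$. You should incorporate the spine $F_2$ and then the joining cases follow with bounded case analysis.
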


\begin{proof}
Let $a$ and $b$ be the endpoints of~$R$.
(If $R$ is trivial, then let $a=b$.)%
\COMMENT{AL: had `let $a$ and $b$ be the (not necessarily distinct) endpoints of $R$' but this is not what we want.}
For $i\in\{3,5,\ldots,k-2\}$, write $T_{i}=x_{i}y_{i}z_{i}$ where $x_{i}\in V(T_{i-1})$ and $z_{i}\in V(T_{i+1})$.
Thus $x_{i}y_{i}z_{i}$ is a red $P_{3}$ for each $i \in \{3,5,\ldots,k-2\}$. 
Builder chooses the set
\[
F_{1}=\{x_{3}a,bz_{3},x_{5}c_{1},c_{1}z_{5},x_{7}c_{2},c_{2}z_{7},\ldots,x_{k-2}c_{\frac{k-5}{2}},c_{\frac{k-5}{2}}z_{k-2}\}
\]
of edges, where $c_{1},\dots,c_{\frac{k-5}{2}}$ are new vertices.
Note that 
\begin{equation}
|F_{1}|=2 + 2\cdot\frac{k-5}{2} = k-3 < \frac{3(k-1)}{2}.\label{eq:F1bound}
\end{equation}
If Painter colours an edge in $F_{1}$ red, say $x_{i}w$ or $wz_{i}$ for some integer $i$
and some vertex $w$, then $z_{i}y_{i}x_{i}w$ or $wz_{i}y_{i}x_{i}$ respectively is a red $P_{4}$.
So in this case we have achieved~(iii). 

Now suppose Painter colours all edges in $F_{1}$ blue. Then we have
obtained a blue path
\[
S_{1}= T_{2}x_{3} a R b z_{3} T_{4} x_{5} c_{1} z_{5} T_{6} x_{7} c_2 z_7 \dots T_{k-3} x_{k-2} c_{\frac{k-5}{2}} z_{k-2} T_{k-1}.
\]
Note that $S_{1}$ has length
\begin{equation}
\begin{aligned}\label{eqn:S1}
e(S_{1}) & =e(T_{2})+e(T_{4})+\dots+e(T_{k-1})+|F_{1}|+e(R) \\
 & = \left(2\cdot \frac{k-3}{2} + 1\right) + (k-3) + e(R) = e(R) + 2k-5,
\end{aligned}
\end{equation}
where the second equality follows from \eqref{eq:F1bound}.\COMMENT{This equation has changed as we discussed. -JAL 15/10}

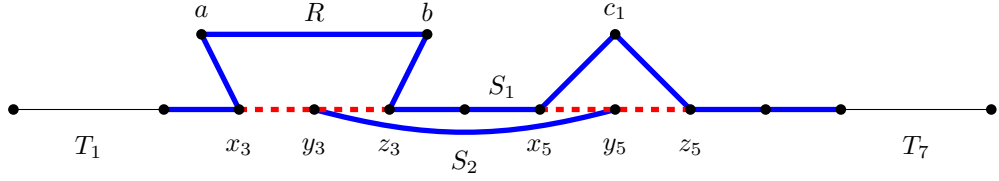
\begin{figure}[t]
\centering
\begin{tikzpicture}
			\begin{scope}[line width=2pt, blue]
			\draw (3,0) to (4,0);
			\draw (6,0) to (8,0);
% 			\draw (1,0) to (2,0);
			\draw (12,0) to (10,0);
% 			\draw (13,0) to (14,0);
		
				% S_1
			\draw (3.5,1) to (6.5,1);
			\draw (3.5,1) to (4,0);
			\draw (6.5,1) to (6,0);
			\draw (8,0) to (9,1) to (10,0);
			
				%S_2
			\draw (5,0) to [out=-15, in =-165] (9,0);
			\end{scope}
			
			\begin{scope}[line width=2pt, red, dashed]
		\draw (4,0) to (6,0);
% 			\draw (3,0) to (2,0);
			\draw (8,0) to (10,0);
% 			\draw (13,0) to (12,0);
			\end{scope}
		
			\begin{scope}
			\draw (1,0) to (3,0);
			\draw (12,0) to (14,0);
			\end{scope}
	
			\begin{scope}[start chain]
			\foreach \i in {3,...,12}
			\fill (\i,0) circle (2pt);
			\end{scope}
			\fill (1,0) circle (2pt);
			\fill (14,0) circle (2pt);
			
 			\node at (2,-0.5) {$T_1$};
% 			\node at (3,-0.5) {$z_1$};

			\node at (4,-0.5) {$x_3$};
			\node at (5,-0.5) {$y_3$};
			\node at (6,-0.5) {$z_3$};

			\node at (8,-0.5) {$x_5$};
			\node at (9,-0.5) {$y_5$};
			\node at (10,-0.5) {$z_5$};

% 			\node at (12,-0.5) {$x_7$};
			\node at (13,-0.5) {$T_7$};
			
			\fill (3.5,1) circle (2pt);
			\fill (6.5,1) circle (2pt);
			\fill (9,1) circle (2pt);
			
			\node at (3.5,1.3) {$a$};
			\node at (6.5,1.3) {$b$};
			\node at (9,1.3) {$c_1$};
			\node at (5,1.3) {$R$};
			
			\node at (7.5,0.3) {$S_1$};
			\node at (7,-0.7) {$S_2$};
\end{tikzpicture}
\caption{Structure of $S_1$ and $S_2$ in Lemma~\ref{lem:useD-path} for a path $T_1\dots T_7$ of type C.}
\label{fig:S1S2}
\end{figure}

Builder now chooses the set
\[
F_{2}=\{ y_3y_5 , y_5y_7, \ldots, y_{k-4}y_{k-2}\}
\]
of edges.
Note that $|F_2| = (k-5)/2$, so by \eqref{eq:F1bound} we have uncovered
\begin{align}
|F_{1}|+|F_{2}|=k-3+\frac{k-5}{2}=\frac{3k-11}{2} \label{eqn:F1+F2}
\end{align}
edges in total so far.
If Painter colours an edge in $F_2$ red, say $y_{i}y_{i+2}$ for some $i \in \{3,5,\ldots, k-4\}$, then
$z_{i}y_{i}y_{i+2}x_{i+2}$ is a red $P_{4}$.
So in this case we have achieved~(iii). Suppose Painter
colours all edges in $F_{2}$ blue. Then we have obtained a blue path
\[
S_{2}=y_{k-2}y_{k-4}\dots y_{5}y_{3}.
\]
Note that $S_{2}$ has length $|F_2| = (k-5)/2$.
Moreover, $S_1$ and $S_2$ are vertex-disjoint (see Figure~\ref{fig:S1S2}) and by~\eqref{eqn:S1} we have
\begin{align} 
e(S_{1})+e(S_{2}) & = e(R) + 2k-5 +\frac{k-5}2 = e(R) + \frac{5 ( k - 3)}{2}. \label{eqn:S1+S2}
\end{align}

Our aim is now to join $S_1$ and $S_2$ together to form $R'$. The way in which we do this depends on the structure of $T_1$ and $T_k$.

\begin{figure}[t]
\centering
\begin{tikzpicture}
			\begin{scope}[line width=2pt, blue]
			\draw (3,0) to (4,0);
			\draw (6,0) to (8,0);
			\draw (12,0) to (10,0);
% 			\draw (13,0) to (12,0);
				% S_1
			\draw (3.5,1) to (6.5,1);
			\draw (3.5,1) to (4,0);
			\draw (6.5,1) to (6,0);
			\draw (8,0) to (9,1) to (10,0);
			
				%S_2
			\draw (5,0) to [out=-15, in =-165] (9,0);
			\draw (2,0) to [out=15, in =165] (9,0);
			\draw (1,0) to [out=-15, in =-165] (5,0);
			\draw (1,0) to (2,1) to (3,0);
			\end{scope}
			
			\begin{scope}[line width=2pt, red, dashed]
			\draw (4,0) to (6,0);
			\draw (8,0) to (10,0);
			\draw (3,0) to (1,0);
			
			\end{scope}
			
			\begin{scope}[start chain]
			\foreach \i in {1,...,12}
			\fill (\i,0) circle (2pt);
			\end{scope}
			
			\fill (14,0) circle (2pt);
			\draw (12,0) to (14,0);
			\fill (2,1) circle (2pt);
			\node at (2,1.3) {$u$};
			
			\node at (1,-0.5) {$x_1$};
			\node at (2,-0.5) {$y_1$};
			\node at (3,-0.5) {$z_1$};

			\node at (4,-0.5) {$x_3$};
			\node at (5,-0.5) {$y_3$};
			\node at (6,-0.5) {$z_3$};

			\node at (8,-0.5) {$x_5$};
			\node at (9,-0.5) {$y_5$};
			\node at (10,-0.5) {$z_5$};

% 			\node at (12,-0.5) {$x_7$};
			\node at (13,-0.5) {$T_7$};
% 			\node at (14,-0.5) {$z_7$};
			
			\fill (3.5,1) circle (2pt);
			\fill (6.5,1) circle (2pt);
			\fill (9,1) circle (2pt);
			
			\node at (3.5,1.3) {$a$};
			\node at (6.5,1.3) {$b$};
			\node at (9,1.3) {$c_1$};
			\node at (5,1.3) {$R$};
			
			\node at (1,1.3) {(i)};
\end{tikzpicture}
\vspace{2mm}
\begin{tikzpicture}
			\begin{scope}[line width=2pt, blue]
			\draw (3,0) to (4,0);
			\draw (6,0) to (8,0);
			\draw (3,0) to (2,0);
			\draw (12,0) to (10,0);
			\draw (13,0) to (12,0);
				% S_1
			\draw (3.5,1) to (6.5,1);
			\draw (3.5,1) to (4,0);
			\draw (6.5,1) to (6,0);
			\draw (8,0) to (9,1) to (10,0);
			
				%S_2
			\draw (5,0) to [out=-15, in =-165] (9,0);
			\draw (2,0) to [out=-15, in =-165] (5,0);
			\end{scope}
			
			\begin{scope}[line width=2pt, red, dashed]
			\draw (4,0) to (6,0);
			\draw (8,0) to (10,0);
			\end{scope}
			
			\begin{scope}[start chain]
			\foreach \i in {2,...,13}
			\fill (\i,0) circle (2pt);
			\end{scope}
			
			\node at (2,-0.5) {$x_1$};
% 			\node at (2,-0.5) {$y_1$};
			\node at (3,-0.5) {$z_1$};

			\node at (4,-0.5) {$x_3$};
			\node at (5,-0.5) {$y_3$};
			\node at (6,-0.5) {$z_3$};

			\node at (8,-0.5) {$x_5$};
			\node at (9,-0.5) {$y_5$};
			\node at (10,-0.5) {$z_5$};

			\node at (12,-0.5) {$x_7$};
			\node at (13,-0.5) {$z_7$};
% 			\node at (14,-0.5) {$z_7$};
			
			\fill (3.5,1) circle (2pt);
			\fill (6.5,1) circle (2pt);
			\fill (9,1) circle (2pt);
			
			\node at (3.5,1.3) {$a$};
			\node at (6.5,1.3) {$b$};
			\node at (9,1.3) {$c_1$};
			\node at (5,1.3) {$R$};
			
			\node at (1,1.3) {(ii)};
\end{tikzpicture}
\vspace{2mm}
\begin{tikzpicture}
			\begin{scope}[line width=2pt, blue]
			\draw (3,0) to (4,0);
			\draw (6,0) to (8,0);
			\draw (3,0) to (2,0);
			\draw (12,0) to (10,0);
			\draw (13,0) to (14,0);
				% S_1
			\draw (3.5,1) to (6.5,1);
			\draw (3.5,1) to (4,0);
			\draw (6.5,1) to (6,0);
			\draw (8,0) to (9,1) to (10,0);
			
				%S_2
			\draw (5,0) to [out=-15, in =-165] (9,0);
			
			\draw (9,0) to [out=-15, in =-165] (12,0);
			\draw (5,0) to [out=15, in =165] (13,0);
			\end{scope}
			
			\begin{scope}[line width=2pt, red, dashed]
			\draw (4,0) to (6,0);
			\draw (8,0) to (10,0);
			\draw (13,0) to (12,0);
			\end{scope}
			
			\begin{scope}[start chain]
			\foreach \i in {2,...,14}
			\fill (\i,0) circle (2pt);
			\end{scope}
			
			\node at (2,-0.5) {$x_1$};
% 			\node at (2,-0.5) {$y_1$};
			\node at (3,-0.5) {$z_1$};

			\node at (4,-0.5) {$x_3$};
			\node at (5,-0.5) {$y_3$};
			\node at (6,-0.5) {$z_3$};

			\node at (8,-0.5) {$x_5$};
			\node at (9,-0.5) {$y_5$};
			\node at (10,-0.5) {$z_5$};

			\node at (12,-0.5) {$x_7$};
			\node at (13,-0.5) {$y_7$};
			\node at (14,-0.5) {$z_7$};
			
			\fill (3.5,1) circle (2pt);
			\fill (6.5,1) circle (2pt);
			\fill (9,1) circle (2pt);
			
			\node at (3.5,1.3) {$a$};
			\node at (6.5,1.3) {$b$};
			\node at (9,1.3) {$c_1$};
			\node at (5,1.3) {$R$};
			
			\node at (1,1.3) {(iii)};
\end{tikzpicture}
\vspace{2mm}
\begin{tikzpicture}
			\begin{scope}[line width=2pt, blue]
			\draw (3,0) to (4,0);
			\draw (6,0) to (8,0);
			\draw (1,0) to (2,0);
			\draw (12,0) to (10,0);
			\draw (13,0) to (14,0);
				% S_1
			\draw (3.5,1) to (6.5,1);
			\draw (3.5,1) to (4,0);
			\draw (6.5,1) to (6,0);
			\draw (8,0) to (9,1) to (10,0);
			
				%S_2
			\draw (5,0) to [out=-15, in =-165] (9,0);
			
			\draw (3,0) to [out = 15, in = 165] (13,0);
			\draw (9,0) to [out=-15, in =-165] (12,0);
			\draw (2,0) to [out=-15, in =-165] (5,0);
			\end{scope}
			
			\begin{scope}[line width=2pt, red, dashed]
			\draw (4,0) to (6,0);
			\draw (3,0) to (2,0);
			\draw (8,0) to (10,0);
			\draw (13,0) to (12,0);
			\end{scope}
			
			\begin{scope}[start chain]
			\foreach \i in {1,2,...,14}
			\fill (\i,0) circle (2pt);
			\end{scope}
			
			\node at (1,-0.5) {$x_1$};
			\node at (2,-0.5) {$y_1$};
			\node at (3,-0.5) {$z_1$};

			\node at (4,-0.5) {$x_3$};
			\node at (5,-0.5) {$y_3$};
			\node at (6,-0.5) {$z_3$};

			\node at (8,-0.5) {$x_5$};
			\node at (9,-0.5) {$y_5$};
			\node at (10,-0.5) {$z_5$};

			\node at (12,-0.5) {$x_7$};
			\node at (13,-0.5) {$y_7$};
			\node at (14,-0.5) {$z_7$};
			
			\fill (3.5,1) circle (2pt);
			\fill (6.5,1) circle (2pt);
			\fill (9,1) circle (2pt);
			
			\node at (3.5,1.3) {$a$};
			\node at (6.5,1.3) {$b$};
			\node at (9,1.3) {$c_1$};
			\node at (5,1.3) {$R$};
			
			\node at (1,1.3) {(iv)};
\end{tikzpicture}
\caption{Extending a blue path $R$ with a path $T_1\dots T_7$ as in cases 1 through 4 (respectively) of Lemma~\ref{lem:useD-path}.}
\label{fig:useD-path}
\end{figure}
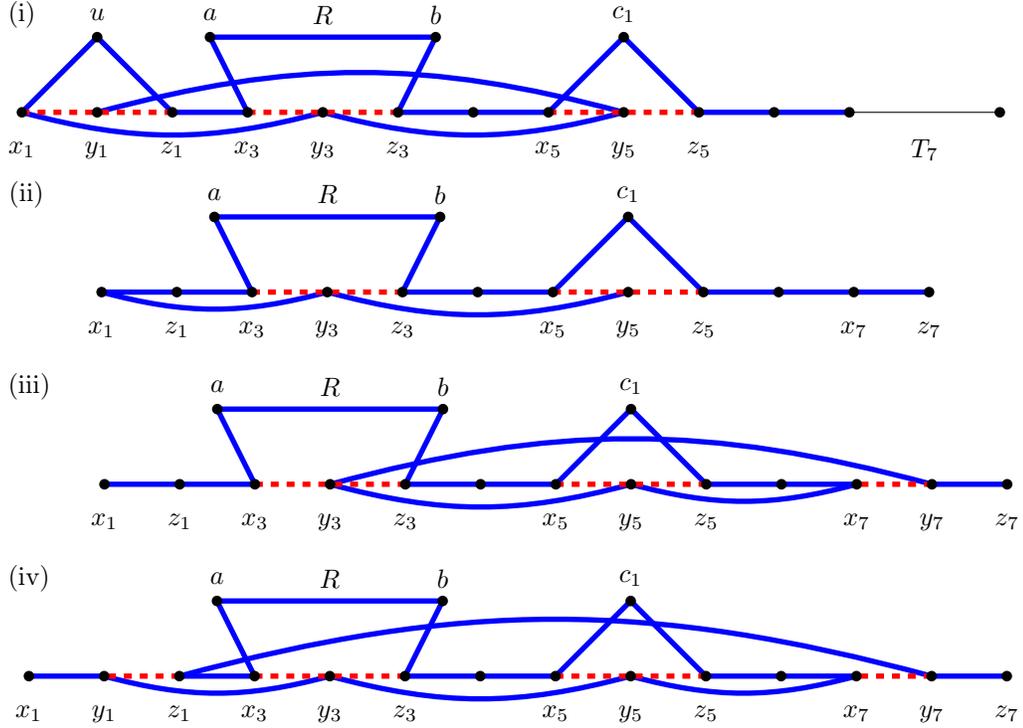

\medskip\noindent \textbf{Case 1: } $T_1\ldots T_k$ is incomplete.

Without loss of generality we may assume that $T_1$ is a red $P_3$, say $x_1 y_1 z_1$ with $z_1 \in V(T_2)$.
Builder chooses the edges $y_{1}y_{k-2}$, $y_{3}x_1$, $x_{1}u$ and $uz_{1}$, where~$u$ is a new vertex.
In total, Builder has uncovered $|F_{1}|+|F_{2}|+4 = 3(k-1)/2$ edges by~\eqref{eqn:F1+F2}.
If Painter colours any of the edges red, then $x_1y_{1}y_{k-2}z_{k-2}$, $y_{3} x_1y_1 z_1$, $z_1 y_1 x_1 u$ or $u z_1 y_1 x_1$ is a red $P_4$, respectively,  and we have achieved~(iii).
Suppose Painter colours them all blue.
Then $R':=y_1 y_{k-2}S_2 y_3 x_1 u z_1 S_1$ is a blue path of length $e(S_1)+e(S_2) +4 = e(R) + (5k-7)/2$ by~\eqref{eqn:S1+S2} (see Figure~\ref{fig:useD-path}(i)) and hence we have achieved~(i).

\medskip\noindent \textbf{Case 2: } $T_1\dots T_k$ is complete and each of $T_1$ and $T_k$ is a blue edge.%
\COMMENT{AL:Cannot use Lemma~\ref{lem:joinpaths}. }

Write $T_1 = x_1z_1$ and $T_k = x_kz_k$ with $z_1 \in V(T_2)$ and $x_k \in V(T_{k-1})$.
First suppose that $k \ge 7$.
Builder chooses the edges $y_{3}x_1$ and $y_{k-2}x_1$.
In total, Builder has uncovered $|F_1|+|F_2|+2=(3k-7)/2$ edges by~\eqref{eqn:F1+F2}. 
If Painter colours both edges red, then $x_3y_3x_1y_{k-2}$ is a red $P_4$ and we have achieved~(iii).
Suppose Painter colours $x_1y_3$ blue.
Then $R':=S_ 2 y_3 x_1 z_1 S_1 x_kz_k$ is a blue path of length $e(S_1)+e(S_2) +3 = e(R) + (5k-9)/2$ by~\eqref{eqn:S1+S2} (see Figure~\ref{fig:useD-path}(ii)).
Writing $\ell' := e(R') - e(R) = (5k-9)/{2}$, Builder has uncovered 
\begin{align*}
\frac{3k-7}{2} < \frac{7}{5}\cdot \frac{5k-9}{2} - (2k-3) = \frac{7\ell'}{5} - \ell
\end{align*}
edges in total, where the last equality follows from Proposition~\ref{prop:lengthbound}.
Hence we have achieved~(ii).
If instead Painter colours $x_1y_{k-2}$ blue, the same argument shows we have achieved~(ii) on replacing $S_2y_3$ by $S_2y_{k-2}$.
So if $k \ge 7$, we are done.

If instead $k = 5$\COMMENT{This can never happen but it's easier to handle the case than it is to exclude it... -JAL 02/09}, 
Builder chooses the edges $y_{3}x_1$ and $ux_1$, where $u$ is a new vertex.
If Painter colours both edges red, then $ux_1y_3z_3$ is a red $P_4$ and we have achieved~(iii).
Suppose instead Painter colours $wx_1$ blue for some $w \in \{u,y_3\}$. Then $R' := wx_1z_1S_1x_5z_5$ is a blue path of length $e(S_1) + e(S_2) + 3$ (as $e(S_2) = 0$) and Builder has uncovered $|F_1|+|F_2|+2$ edges. Thus we have achieved (ii) as above.

\medskip\noindent \textbf{Case 3: } $T_1\dots T_k$ is complete and exactly one of $T_1$ and $T_k$ is a blue edge.

Without loss of generality we may assume that $T_1$ is a blue edge. Let $T_1 = x_1 z_1$ with $z_1 \in V(T_2)$,
and let $T_k = x_ky_kz_k$ with $x_k \in V(T_{k-1})$. Note that $x_ky_k$ is red and $y_k z_k$ is blue.
Builder chooses the edges $x_ky_{k-2}$ and $y_3y_{k}$.
In total, Builder has uncovered $|F_1|+|F_2|+2=(3k-7)/2$ edges by~\eqref{eqn:F1+F2}. 
If Painter colours either $x_ky_{k-2}$ or $y_3y_k$ red, then $y_kx_ky_{k-2}x_{k-2}$ or $x_3 y_3 y_k x_k$ is a red $P_4$ respectively,  and we have achieved~(iii).
Suppose Painter instead colours both edges blue.
Then $R':=x_1 z_1 S_1 x_k y_{k-2} S_2 y_3 y_k z_k$ is a blue path of length $e(S_1)+e(S_2) +4 = e(R) + (5k-7)/2$ by~\eqref{eqn:S1+S2} (see Figure~\ref{fig:useD-path}(iii)).
Writing $\ell' := e(R') - e(R) = (5k-7)/{2}$,
Builder has uncovered 
\begin{align*}
\frac{3k-7}{2} < \frac{7}{5}\cdot \frac{5k-7}{2} - (2k-2) = \frac{7\ell'}{5} - \ell
\end{align*}
edges in total, where the last equality follows from Proposition~\ref{prop:lengthbound}.
Hence we have achieved~(ii).

\medskip\noindent \textbf{Case 4: } $T_1\dots T_k$ is complete and neither $T_1$ nor $T_k$ is a blue edge.

Let $T_{1}=x_{1}y_{1}z_{1}$ and $T_{k}=x_{k}y_{k}z_{k}$ where $z_{1}\in V(T_{2})$ and $x_{k}\in V(T_{k-1})$. Thus $x_{1}y_{1}$ and $y_{k}z_{k}$ are blue, and $y_{1}z_{1}$ and $x_{k}y_{k}$ are
red. Then Builder chooses the edges $y_{k}z_{1}$, $x_{k}y_{k-2}$,
and $y_{3}y_{1}$. 
In total, Builder has uncovered $|F_1|+|F_2|+3=(3k-5)/2$ edges by~\eqref{eqn:F1+F2}.
If Painter colours one of these edges red, then
$x_{k}y_{k}z_{1}y_{1}$, $y_{k}x_{k}y_{k-2}x_{k-2}$ or $z_{3}y_{3}y_{1}z_{1}$
respectively is a red $P_{4}$ and we have achieved~(iii). Suppose
Painter colours them all blue.
Then $R':=z_{k}y_{k}z_{1}S_{1}x_{k}y_{k-2}S_{2}y_{3}y_{1}x_{1}$ is a blue path (see Figure~\ref{fig:useD-path}(iv)) of length $e(S_1)+e(S_2)+5 = e(R)+5(k-1)/2$ by \eqref{eqn:S1+S2}.
Writing $\ell' := e(R') - e(R) = ({5k-5})/{2}$, 
Builder has uncovered 
\begin{align*}
\frac{3k-5}{2} = \frac{7}{5}\cdot \frac{5k-5}{2} - (2k-1) = \frac{7\ell'}{5} - \ell
\end{align*}
edges in total, where the last equality follows from Proposition~\ref{prop:lengthbound}.
We have achieved case~(ii).
\end{proof}

Finally, we consolidate Lemmas~\ref{lem:findBC-paths},~\ref{lem:useB-path} and~\ref{lem:useD-path} and Corollary~\ref{cor:extendC-path} into a single lemma which says that given two independent red edges, Builder can extend either $Q$ or~$R$. As with Lemma~\ref{lem:useblueedges}, in applying Lemma~\ref{lem:userededges} we will take $m$ to be $\ell - e(Q) - e(R)-1$.

\begin{lem} \label{lem:userededges}
Let $m \ge 9$ be an integer.
Let $Q$ and $R$ be blue paths and let $e$ and $f$ be two red edges. 
Suppose that $Q$ is non-trivial and has an endpoint $b$
incident to a red edge $bc$. 
Further suppose that $V(Q)\cup\{c\}$, $R$, $e$ and $f$ are pairwise vertex-disjoint.
Then Builder can force Painter to construct one of the following:
\begin{enumerate}
\item a blue path $Q'$ with one endpoint $b'$ incident to a red edge $b'c'$ such that $e(Q') = e(Q) +5$, while uncovering $5$ edges. Moreover, $R$ is vertex-disjoint from $V(Q')\cup\{c'\}$.

\item  a blue path $R'$ such that $e(R') = e(R) +\ell'$ for some $1 \le \ell' \le m+5$ while uncovering at most $7\ell'/5-2$ edges. Moreover, $R'$ is vertex-disjoint from $V(Q)\cup\{c\}$.

\item  a blue path $R'$ such that $e(R') \ge e(R) + m$ while uncovering at most $7m/5+6$ edges. Moreover, $R'$ is vertex-disjoint from $V(Q)\cup \{c\}$.

\item a red $P_{4}$ while uncovering at most $7m/5+6$ edges.
\end{enumerate}
\end{lem}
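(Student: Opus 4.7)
The plan is to chain together the preceding lemmas, with the case split driven by Painter's responses. First I would apply Lemma~\ref{lem:findBC-paths} to the two red edges $e$ and $f$. If this produces a red $P_4$ (in 2 uncovered edges) we achieve (iv). If it produces a path $vwxyz$ of type~B (in 2 uncovered edges), I would feed it together with $Q$ into Lemma~\ref{lem:useB-path}: after at most 3 more edges either $Q$ extends by exactly 5 blue edges (achieving (i), since every vertex Builder introduces is new with respect to the whole game, so $R$ remains vertex-disjoint from $V(Q')\cup\{c'\}$) or a red $P_4$ appears (achieving (iv)), in both subcases within 5 uncovered edges.

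The substantive case is when Lemma~\ref{lem:findBC-paths} returns an incomplete type-C path $T_1T_2T_3$ of length 5 in 3 edges. Here I would choose $k_0$ to be the smallest odd integer lying in the interval $I:=[(2m+7)/5,\,(2m+15)/5]$; a short case analysis modulo~5 shows such a $k_0$ exists for every integer $m\ge 9$, and $(2m+7)/5\ge 5$ automatically forces $k_0\ge 5$. Apply Corollary~\ref{cor:extendC-path} with this $k_0$. If the corollary forces a red $P_4$, the total is at most $3+(2k_0-6)=2k_0-3\le 7m/5+6$ by the upper endpoint of $I$, yielding (iv). Otherwise Builder obtains a type-C path $T_1'\dots T_k'$ of some length $\ell$ using $\ell-5$ additional edges, where either $5\le k\le k_0$ (complete) or $k=k_0$ (incomplete).

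I would then apply Lemma~\ref{lem:useD-path} to $R$ and this type-C path. In the complete branch, Lemma~\ref{lem:useD-path}(ii) extends $R$ by $\ell'$ edges for some $1\le\ell'\le 5(k-1)/2$ while using $7\ell'/5-\ell$ further edges; the totals telescope to exactly $7\ell'/5-2$, and the upper bound $k\le k_0\le (2m+15)/5$ yields $\ell'\le m+5$, so we achieve (ii). In the incomplete branch, Lemma~\ref{lem:useD-path}(i) extends $R$ by $(5k_0-7)/2$ edges using $3(k_0-1)/2$ further edges; the lower bound $k_0\ge(2m+7)/5$ gives $(5k_0-7)/2\ge m$, while $\ell\le 2k_0-1$ and the upper bound on $k_0$ combine to bound the total by $(7k_0-9)/2\le 7m/5+6$, achieving (iii). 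If either branch of Lemma~\ref{lem:useD-path} instead produces a red $P_4$, the same edge count $(\ell-2)+3(k-1)/2\le(7k_0-9)/2\le 7m/5+6$ gives (iv). Vertex-disjointness of $R'$ from $V(Q)\cup\{c\}$ in (ii) and (iii) follows automatically from the convention that Builder's new vertices are new with respect to the whole game.

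The main obstacle I expect is the parameter-fitting: verifying that an odd $k_0$ always sits inside $I$ even though $|I|=8/5<2$, and that the constants line up so that $7m/5+6$ suffices in the worst case. The statement's constants ($-2$ in (ii), $+6$ in (iii) and (iv), and the hypothesis $m\ge 9$) appear tuned so that the incomplete type-C branch saturates the $7m/5+6$ bound exactly, which is why the argument should not work for significantly smaller $m$.
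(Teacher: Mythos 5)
Your proof is correct and follows essentially the same approach as the paper's: apply Lemma~\ref{lem:findBC-paths} to $e$ and $f$, dispatch the type-B case via Lemma~\ref{lem:useB-path}, and in the type-C case choose an odd parameter $k_0 \approx 2m/5 + 3$, run Corollary~\ref{cor:extendC-path}, and feed the result into Lemma~\ref{lem:useD-path}, with the bookkeeping telescoping to the stated bounds. The only presentational difference is how $k_0$ is introduced: you define it as the smallest odd integer in the interval $I = [(2m+7)/5,\,(2m+15)/5]$ and appeal to a parity argument to show existence, whereas the paper defines $k_0$ as the least odd integer $\ge (2m+7)/5$ and then \emph{derives} the upper bound $k_0 \le 2m/5 + 3$ from the observation that $5k_0$ and $2m+17$ are both odd; these produce the same $k_0$, and your phrase ``a short case analysis modulo~5'' is a correct but slightly under-specified placeholder for exactly this parity observation (note that a generic interval of length $8/5$ need not contain an odd integer, so the fact that the endpoints involve the odd quantity $2m+7$ is essential). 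Everything else, including the edge counts $7\ell'/5-2$ in (ii), the bound $(7k_0-9)/2 \le 7m/5+6$ in (iii) and (iv), and the remark about vertex-disjointness following from the paper's new-vertex convention, matches the paper's argument.
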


\begin{proof}
We first apply Lemma~\ref{lem:findBC-paths} to $e$ and $f$.
If as in Lemma~\ref{lem:findBC-paths}(iii) we obtain a red $P_{4}$
while uncovering 2 edges, then we have achieved~(iv). Suppose we do
not. Then we split into cases depending on Painter's choice.

\medskip\noindent \textbf{Case 1:} We obtain a path $vwxyz$ of type~B while uncovering 2 edges, as in Lemma~\ref{lem:findBC-paths}(i). Moreover, $vwxyz$ is vertex-disjoint from $V(Q)\cup \{c\}$ and $R$.

We apply Lemma~\ref{lem:useB-path} to $Q$ and $vwxyz$. 
Hence we have uncovered at most 5 edges in total.
If we obtain
a red $P_{4}$, then we have achieved~(iv).
Suppose instead we obtain a blue path $Q'$ of length $q+5$ with one endpoint $b'$ incident to a red
edge $b'c'$, where $V(Q')\cup\{c'\}$ is vertex-disjoint from $R$.
Then we have achieved (i). 

\medskip\noindent \textbf{Case 2:} We obtain an incomplete path $T_1T_2T_3$ of type~C and length~5 while uncovering 3 edges, as in Lemma~\ref{lem:findBC-paths}(ii). Moreover, $T_1T_2T_3$ is vertex-disjoint from $V(Q)\cup\{c\}$ and $R$.

Let $k_0$ be the least odd number such that $k_0 \ge (2m+7)/5$.
Since $5k_0 < (2m+7) + 5\cdot 2$, and both $5k_0$ and $2m+17$ are odd integers, we have $k_0 \le 2m/5+3$.
Moreover, $k_0 \ge (2m+7)/5 \ge 5$ since $m \ge 9$.
We apply Corollary~\ref{cor:extendC-path} to $T_1T_2T_3$ and~$k_0$. If
we obtain a red $P_{4}$ while uncovering at most $2k_0 -6$ additional
edges, then we have achieved~(iv).
Suppose we do not. Then we split into subcases depending on Painter's choice.

\medskip\noindent \textbf{Case 2a:} For some $k,\ell \in \N$, we obtain a complete path $T'_{1}\dots T'_{k}$ of type~C and length $\ell$ such that $5 \le k \le k_0$
while uncovering $\ell-5$ additional edges, as in Corollary~\ref{cor:extendC-path}(i).
Moreover, $T'_{1}\dots T'_{k}$ is vertex-disjoint from $V(Q)\cup\{c\}$ and $R$.

We now apply Lemma~\ref{lem:useD-path} to $T'_{1}\dots T'_{k}$ and $R$.
Suppose we obtain a blue path~$R'$ with length $e(R)+\ell'$, where 
\[\ell' \le \frac{5(k-1)}{2} \le \frac{5(k_0-1)}{2} \le \frac{5}{2}\cdot \left(\frac{2m}{5}+2\right) = m+5,\] 
while uncovering at most $7\ell'/5 - \ell$ edges as in Lemma~\ref{lem:useD-path}(ii).
Note that $R'$ is vertex-disjoint from $V(Q)\cup\{c\}$.
In total we have uncovered at most $3 + (\ell-5) + (7\ell'/5-\ell) = 7 \ell'/5-2$ edges, so we have achieved~(i).

Suppose instead we obtain a red~$P_{4}$ while uncovering at most $3(k-1)/2$ edges as in Lemma~\ref{lem:useD-path}(iii).
Note that $\ell \le 2 k_0 -1$ by Proposition~\ref{prop:lengthbound}.
In total we have therefore uncovered at most
\begin{align}
3 + (\ell-5) + \frac{3(k_0-1)}{2} \le \frac{7k_0 -9}{2} \le \frac{7}{2}\cdot\left(\frac{2m}{5}+3\right) - \frac{9}{2} = \frac{7m}{5}+6\label{eqn:l1}
\end{align}
edges, and thus we have achieved~(iv).

\medskip\noindent \textbf{Case 2b:} For some $\ell \in \N$, we obtain an incomplete path $T'_{1}\dots T'_{k_0}$ of type~C and length $\ell$
while uncovering $\ell-5$ additional edges, as in Corollary~\ref{cor:extendC-path}(ii).
Moreover, $T'_{1}\dots T'_{k_0}$ is vertex-disjoint from $V(Q)\cup\{c\}$ and $R$.

We apply Lemma~\ref{lem:useD-path} to $T'_{1}\dots T'_{k_0}$ and $R$.
Whatever the outcome, we uncover at most $3(k_0-1)/2$ edges.
We have therefore uncovered at most $7m/5+6$ edges in total, as in~\eqref{eqn:l1}.
If we obtain a red~$P_{4}$ as in Lemma~\ref{lem:useD-path}(iii), then we have achieved~(iv).
Hence we may assume that we obtain a blue path $R'$ of length 
\[e(R) +\frac{5k_0 -7}{2} \ge e(R) + \frac{5}{2} \cdot \frac{2m+7}{5} - \frac{7}{2} = e(R)+ m,\] 
as in Lemma~\ref{lem:useD-path}(i). (The inequality follows from the definition of $k_0$.)
We have therefore achieved~(iii).
\end{proof}

\subsection{\label{sub:fullalgo}An upper bound on $\tilde{r}(P_{4},P_{\ell+1})$ for $\ell \ge 3$}

We now use Lemmas~\ref{lem:joinpaths}, \ref{lem:useblueedges} and~\ref{lem:userededges}
to bound $\tilde{r}(P_4,P_{\ell+1})$ above in Theorem~\ref{thm:P4-upper}.
Together with Theorem~\ref{thm:lowerbound}, this will imply Theorem~\ref{thm:pathresultsk=3}.

Recall that Builder's strategy is to extend blue paths $Q$ and $R$ using independent edges.
For the remainder of the section, we denote the graph Builder has uncovered by $G$.
In order to keep track of the lengths of $Q$ and $R$ and the number of independent edges available, we introduce the following notation.

\begin{defn} \label{def:track}%
Given $q,r,n_{\textnormal{blue}},n_{\textnormal{red}} \in \mathbb{N}_0$, we say that a graph $G$ contains a \emph{$(q,r,n_{\textnormal{blue}},n_{\textnormal{red}})$-structure} if  it satisfies the following properties:
\begin{enumerate}
\item [(P1)] $G$ contains a (possibly trivial) blue path $Q$ of length $q$ with one endpoint $b$ incident to a red edge $bc$.
\item [(P2)] $G$ contains a (possibly trivial) blue path $R$ of length $r$.
\item [(P3)] $G$ contains a set $F$ of independent edges containing $n_{\textnormal{blue}}$ blue edges and $n_{\textnormal{red}}$ red edges.
\item [(P4)] $V(Q)\cup\{c\}$, $R$ and $F$ are pairwise vertex-disjoint. 
\end{enumerate}
\end{defn}
This notation substantially simplifies the statements of Lemmas~\ref{lem:joinpaths}, \ref{lem:useblueedges} and~\ref{lem:userededges}.
The corresponding statements are as follows.

\begin{cor}
\label{cor:joinpaths}
Let $q,r,n_{\textnormal{red}},n_{\textnormal{blue}} \in \mathbb{N}_0$.
Suppose $G$ is a graph containing a $(q,r,n_{\textnormal{blue}},n_{\textnormal{red}})$-structure.
Then Builder can force Painter to construct a graph $G'\supseteq G$ with $e(G') \le e(G) + 2$ such that 
$G'$ contains a $(q+r+1,0,n_{\textnormal{blue}},n_{\textnormal{red}})$-structure or a red $P_4$.
\end{cor}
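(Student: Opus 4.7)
The plan is to apply Lemma~\ref{lem:joinpaths} directly to the blue paths $Q$ and $R$ provided by the $(q,r,n_{\textnormal{blue}},n_{\textnormal{red}})$-structure, and then verify that the auxiliary data (namely the independent edge set $F$) is preserved.

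First, I would observe that properties (P1), (P2) and (P4) give precisely the hypotheses of Lemma~\ref{lem:joinpaths}: $Q$ is a (possibly trivial) blue path with endpoints $a$ and $b$, where $b$ is incident to the red edge $bc$; and $R$ is a (possibly trivial) blue path vertex-disjoint from $V(Q) \cup \{c\}$. Applying Lemma~\ref{lem:joinpaths}, Builder uncovers at most $2$ edges and forces Painter to produce either a red $P_4$, in which case $G'$ satisfies $e(G') \le e(G) + 2$ and contains a red $P_4$ and we are done; or else a blue path $Q'$ of length $e(Q)+e(R)+1 = q+r+1$ with one endpoint $b'$ incident to a red edge $b'c'$.

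In the latter case, I would need to verify that $G'$ contains a $(q+r+1,0,n_{\textnormal{blue}},n_{\textnormal{red}})$-structure, with $Q'$ playing the role of the new $Q$-path. Inspection of the proof of Lemma~\ref{lem:joinpaths} shows that $V(Q') \cup \{c'\} \subseteq V(Q) \cup V(R) \cup \{c\} \cup N$, where $N$ is a (possibly empty) set of new vertices introduced during the application of the lemma. By the convention in Section~\ref{sec:notation}, these new vertices may be chosen to be disjoint from $V(F)$, and by (P4) we already have $V(F) \cap (V(Q) \cup V(R) \cup \{c\}) = \emptyset$. Hence $V(F) \cap (V(Q')\cup\{c'\}) = \emptyset$, so $F$ still witnesses $n_{\textnormal{blue}}$ independent blue edges and $n_{\textnormal{red}}$ independent red edges for the new structure. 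To supply the required trivial blue path $R'$ of length $0$, we simply designate any vertex of $K_\N$ not appearing in $V(Q')\cup\{c'\}\cup V(F)$; such a vertex exists since $K_\N$ is infinite. Properties (P1)--(P4) are then immediate for the new structure, completing the proof.

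The main obstacle, such as it is, lies in the bookkeeping of verifying (P4) for the new structure -- that is, confirming via case analysis of Lemma~\ref{lem:joinpaths} that no vertex outside $V(Q)\cup V(R)\cup\{c\}$ together with fresh new vertices appears in $V(Q')\cup\{c'\}$. This is a routine inspection of the three cases (depending on whether $R$ is trivial and whether $a=c$) in the proof of Lemma~\ref{lem:joinpaths}, with no real difficulty.
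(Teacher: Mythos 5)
Your proposal is correct, and it matches the paper's intent exactly: the paper states Corollary~\ref{cor:joinpaths} without proof, presenting it (together with Corollaries~\ref{cor:useblueedges} and~\ref{cor:userededges}) as a mere translation of Lemma~\ref{lem:joinpaths} into the $(q,r,n_{\textnormal{blue}},n_{\textnormal{red}})$-structure notation of Definition~\ref{def:track}. Your write-up supplies precisely the bookkeeping the paper leaves implicit -- checking that the hypotheses of Lemma~\ref{lem:joinpaths} are provided by (P1), (P2) and (P4), that the resulting $Q'$ and its incident red edge $b'c'$ live inside $V(Q)\cup V(R)\cup\{c\}$ together with fresh new vertices and hence stay disjoint from $V(F)$, and that a fresh vertex of $K_{\N}$ can serve as the trivial $R'$.
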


\begin{cor}
\label{cor:useblueedges}
Let $m,q,r,n_{\textnormal{red}} \in \mathbb{N}_0$ with $q,m \ge 1$.
Suppose $G$ is a graph containing a $(q,r,2,n_{\textnormal{red}})$-structure.
Then Builder can force Painter to construct a graph $G' \supseteq G$ such that one of the following holds:
\begin{enumerate}
\item $G'$ contains a $(q+\ell',r,n_{\textnormal{blue}},n_{\textnormal{red}})$-structure and $e(G') = e(G) + \ell'$ for some $3 \le \ell' \le m+3$ and some $n_{\textnormal{blue}} \in \mathbb{N}_0$.
Moreover, if $3 \le \ell' < 5 \le m$, then we may take $n_{\textnormal{blue}} = 1$.
\item $G'$ contains a red $P_4$ and $e(G') \le e(G) + m+3$.
\end{enumerate}
\end{cor}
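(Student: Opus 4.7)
The plan is to translate the statement directly into the setting of Lemma~\ref{lem:useblueedges}. I would first extract from the given $(q,r,2,n_{\textnormal{red}})$-structure a blue path $Q$ of length $q\ge 1$ with one endpoint $b$ incident to a red edge $bc$, a blue path $R$ of length $r$, and an independent edge set $F$ containing two blue edges $e,f$ together with $n_{\textnormal{red}}$ red edges. Property~(P4) ensures that $e$ and $f$ are vertex-disjoint from $V(Q)\cup\{c\}$, and $Q$ is non-trivial since $q\ge 1$, so Lemma~\ref{lem:useblueedges} is directly applicable to $Q$, $e$, $f$ with parameter~$m$.

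I would then split on the output of the lemma. If Lemma~\ref{lem:useblueedges}(ii) fires, Builder obtains a red~$P_4$ after at most $m+3$ further edges and the resulting graph $G'$ immediately satisfies case~(ii) of the corollary. If instead Lemma~\ref{lem:useblueedges}(i) fires, Builder obtains a blue path $Q'$ of length $q+\ell'$ for some $3\le\ell'\le m+3$ with an endpoint $b'$ incident to a red edge $b'c'$, while uncovering exactly $\ell'$ new edges. To exhibit a $(q+\ell',r,n_{\textnormal{blue}},n_{\textnormal{red}})$-structure in $G'$ I would take $R':=R$ and let $F'$ consist of the $n_{\textnormal{red}}$ red edges of $F$; if moreover $3\le\ell'<5\le m$ then Lemma~\ref{lem:useblueedges}(i) explicitly preserves the independence of $f$ and its vertex-disjointness from $V(Q')\cup\{c'\}$, so I would add $f$ to $F'$ and take $n_{\textnormal{blue}}=1$. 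In every other case I would simply take $n_{\textnormal{blue}}=0$.

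The only real work is checking condition~(P4) for the new structure, which is the single mild obstacle. It is handled by the convention in Section~\ref{sec:notation}: any new vertex Builder selects while applying Lemma~\ref{lem:useblueedges} is fresh with respect to the whole of~$G$, so in particular it avoids $V(R)$ and the vertices of the red edges of~$F$. Combining this with the disjointness guaranteed by Lemma~\ref{lem:useblueedges}(i) (and its moreover clause when needed) yields pairwise vertex-disjointness of $V(Q')\cup\{c'\}$, $V(R')$ and $V(F')$, as required by~(P4), and the corollary follows.
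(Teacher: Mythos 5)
Your proposal is correct and follows exactly the route the paper intends: the paper presents Corollary~\ref{cor:useblueedges} as a direct restatement of Lemma~\ref{lem:useblueedges} in the $(q,r,n_{\textnormal{blue}},n_{\textnormal{red}})$-structure notation, offering no separate proof, and your write-up simply makes explicit the routine translation and (P4) bookkeeping that the paper leaves implicit. One very small point of precision: when $\ell'\ge 5$ the new path $Q'$ may also absorb vertices of the original blue edges $e$ and $f$ (not only new vertices), but these were already disjoint from $R$ and from the red edges of $F$ by the original (P4), so the disjointness you need still holds; this is consistent with what you wrote, just worth stating outright.
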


\begin{cor}
\label{cor:userededges}
Let $m,q,r,n_{\textnormal{blue}} \in \mathbb{N}_0$ with $q \ge 1$ and $m \ge 9$.
Suppose $G$ is a graph containing a $(q,r,n_{\textnormal{blue}},2)$-structure.
Then Builder can force Painter to construct a graph $G' \supseteq G$ such that one of the following holds:
\begin{enumerate}
%\item There exist $q'$ and $r'$ such that writing $q'+r' = q+r+\ell'$, the following hold. $G'$ contains a $(q',r',n_{\textnormal{blue}},0)$-structure, we have $1 \le \ell' \le m+5$, and $e(G') \le e(G) + 7\ell'/5 - 2$.
\item $e(G') = e(G) + 5$ and $G'$ contains a $(q+5,r,n_{\textnormal{blue}},0)$-structure.
\item There exists $1 \le \ell' \le m+5$ such that $e(G') \le e(G) + 7 \ell'/5-2$ and $G'$ contains a $(q,r+\ell',n_{\textnormal{blue}},0)$-structure.
\item $e(G') \le e(G) + 7m/5 + 6$ and $G'$ contains a $(q,r+m,n_{\textnormal{blue}},0)$-structure.
\item $e(G') \le e(G) + 7m/5 + 6$ and $G'$ contains a red $P_4$.
\end{enumerate}
\end{cor}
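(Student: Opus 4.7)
The plan is to deduce Corollary~\ref{cor:userededges} directly from Lemma~\ref{lem:userededges} by unpacking Definition~\ref{def:track} and matching outcomes one-to-one. First I would use the hypothesis that $G$ contains a $(q, r, n_{\textnormal{blue}}, 2)$-structure to extract a blue path $Q$ of length $q \ge 1$ with an endpoint $b$ incident to a red edge $bc$, a blue path $R$ of length $r$, and a set $F$ of pairwise vertex-disjoint edges consisting of $n_{\textnormal{blue}}$ blue edges and two red edges $e$ and $f$. By (P4), $V(Q)\cup\{c\}$, $R$, $e$, and $f$ are pairwise vertex-disjoint, so the hypotheses of Lemma~\ref{lem:userededges} are satisfied with the given $m \ge 9$.

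Next I would apply Lemma~\ref{lem:userededges} to $Q$, $R$, $e$, $f$, and $m$. The key observation is the new-vertex convention from Section~\ref{sec:notation}: any vertex Builder introduces while executing Lemma~\ref{lem:userededges} may be assumed to be new with respect to the entire graph $G$ rather than merely with respect to $V(Q)\cup V(R)\cup V(e)\cup V(f)\cup\{c\}$. In particular, the $n_{\textnormal{blue}}$ blue edges of $F\setminus\{e,f\}$ remain untouched, they retain their independence, and their vertex set stays disjoint from everything Builder constructs. Hence they continue to witness (P3) for the updated structure.

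It then remains to translate the four outcomes of Lemma~\ref{lem:userededges} into the four outcomes of Corollary~\ref{cor:userededges}. In case~(i), the returned blue path $Q'$ of length $q+5$ has an endpoint $b'$ incident to a red edge $b'c'$ with $V(Q')\cup\{c'\}$ disjoint from $R$; combining $Q'$, the unchanged $R$, and the preserved $n_{\textnormal{blue}}$ blue edges yields a $(q+5, r, n_{\textnormal{blue}}, 0)$-structure, matching corollary outcome~(i). In cases~(ii) and~(iii) the path $Q$ is unchanged while $R$ is replaced by a longer blue path $R'$ vertex-disjoint from $V(Q)\cup\{c\}$; combined with the preserved blue edges of $F$ this gives the required $(q, r+\ell', n_{\textnormal{blue}}, 0)$- or $(q, r+m, n_{\textnormal{blue}}, 0)$-structure, and in the latter case I would truncate $R'$ to a subpath of length exactly $r+m$ if needed. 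Case~(iv) directly delivers the red $P_4$ of corollary outcome~(iv). The edge-count bounds carry over verbatim, since the edges Builder chooses are precisely the edges added to $G$.

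I do not anticipate a significant obstacle: this corollary is essentially a repackaging lemma whose role is to phrase Lemma~\ref{lem:userededges} in the structural language of Definition~\ref{def:track} so that it can be iterated cleanly in the proof of Theorem~\ref{thm:P4-upper}. The only point requiring care is the preservation of the $n_{\textnormal{blue}}$ unused blue edges across the application, which is handled uniformly by the new-vertex convention.
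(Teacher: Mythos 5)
Your proposal is correct and matches the paper's intent exactly: the paper presents Corollary~\ref{cor:userededges} as a direct restatement of Lemma~\ref{lem:userededges} in the language of Definition~\ref{def:track}, omitting the proof as immediate. You fill in the small points the paper leaves implicit --- that the unused $n_{\textnormal{blue}}$ blue edges persist by the new-vertex convention, that $n_{\textnormal{red}}$ drops to $0$ because $e$ and $f$ are consumed, and that $R'$ may need truncation to length exactly $r+m$ in outcome~(iii) since the lemma only guarantees $e(R')\ge e(R)+m$ --- all correctly.
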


\begin{thm}
\label{thm:P4-upper}For all $\ell\in\N$, we have $\tilde{r}(P_{4},P_{\ell+1}) \le (7\ell +52)/5 $.\end{thm}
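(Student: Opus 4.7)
My plan is to describe a Builder strategy in the $\tilde{r}(P_4, P_{\ell+1})$-game that wins within at most $(7\ell + 52)/5$ rounds. Builder will maintain a $(q, r, n_b, n_r)$-structure in the uncovered graph $G$ (as in Definition~\ref{def:track}) throughout the game, and use the three corollaries in concert: Corollaries~\ref{cor:useblueedges} and~\ref{cor:userededges} to extend the blue paths $Q$ and $R$, and Corollary~\ref{cor:joinpaths} at the end to merge them into a single blue $P_{\ell+1}$.

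Builder's strategy has three stages. \emph{Initialization}: Builder uses a bounded number of rounds to force Painter to construct an initial $(q_0, r_0, n_b, n_r)$-structure with $q_0 \ge 1$, by a short case analysis on Painter's colourings of a handful of new edges (with early termination if Painter ever creates a red $P_4$). \emph{Main loop}: while $q + r + 1 < \ell$, set $m := \ell - q - r - 1$; if neither $n_b \ge 2$ nor $n_r \ge 2$, Builder adds new independent edges (disjoint from $V(Q) \cup \{c\} \cup V(R) \cup V(F)$) until one of these inequalities holds. Since any three pairwise independent edges contain two of the same colour by pigeonhole, at most three additions suffice. Builder then applies Corollary~\ref{cor:useblueedges} if $n_b \ge 2$, or (provided $m \ge 9$) Corollary~\ref{cor:userededges} if $n_r \ge 2$. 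Either Painter creates a red $P_4$ and the game ends, or $q + r$ strictly increases. \emph{Joining}: once $q + r + 1 \ge \ell$, Builder applies Corollary~\ref{cor:joinpaths} to form a blue path of length $q + r + 1 \ge \ell$, yielding a blue $P_{\ell+1}$ within two further edges.

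For the cost analysis, the key observation is that both Corollaries~\ref{cor:useblueedges} and~\ref{cor:userededges} guarantee an extension-to-cost ratio of at least $5/7$: Corollary~\ref{cor:useblueedges} achieves ratio $1$, while the ``inefficient'' case Corollary~\ref{cor:userededges}(ii) achieves ratio $\ell'/(7\ell'/5 - 2) \ge 5/7$, and the remaining cases of Corollary~\ref{cor:userededges} are at least as good (once $m$ is suitably large). Summing over all main-loop applications gives a total main-loop cost of at most $7(\ell - q_0 - r_0 - 1)/5 + O(1)$, where the $O(1)$ term must absorb all per-iteration overhead from adding fresh independent edges. To keep this bounded independent of $\ell$, Builder must carefully track and re-use leftover edges in $F$ between iterations (exploiting the ``moreover'' clause of Corollary~\ref{cor:useblueedges}, which preserves one blue edge after a short extension), so that only a bounded total number of ``overhead'' edges is ever spent outside of the efficient corollary applications.

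The main obstacle is pinning down the absolute additive constant $52/5$. Two sub-cases require extra care: the endgame when the main loop exits with $m < 9$ but still $q + r + 1 < \ell$, where Builder must fall back on Corollary~\ref{cor:useblueedges} with small $m$ or on ad hoc greedy extensions of $Q$; and the initialization, which must be efficient enough that the constant term stays below $52/5 \approx 10.4$. Since each of initialization, endgame, loop overhead, and the final join contributes a small constant, obtaining a clean bound requires an involved case analysis to verify all transitions, which is presumably why the authors have deferred this proof to the appendix.
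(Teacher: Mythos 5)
Your outline correctly identifies the paper's strategy: maintain a $(q,r,n_{\textnormal{blue}},n_{\textnormal{red}})$-structure, extend $Q$ or $R$ via Corollaries~\ref{cor:useblueedges} and~\ref{cor:userededges} depending on which colour of independent edges you can collect (using pigeonhole on three fresh edges), and close with Corollary~\ref{cor:joinpaths}. You also correctly flag the two places where the additive constant is at risk (initialization and the $m<9$ endgame), and the need to carry leftover edges in $F$ between iterations. This is the same three-stage approach the paper takes.

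However, the proposal stops short of the key technical device that actually makes the constant $52/5$ fall out: the explicit potential invariant the paper maintains, namely
$e(G_i)\le \tfrac{7(q_i+r_i)+4}{5}+n_{\textnormal{blue},i}+n_{\textnormal{red},i}$,
together with the cap $n_{\textnormal{blue},i},n_{\textnormal{red},i}\le 1$ (so the surplus never exceeds $2$). Each fresh independent edge is ``charged'' one unit of potential, each application of a corollary is shown to preserve the inequality (with the ``moreover'' clause of Corollary~\ref{cor:useblueedges} used to credit a surviving blue edge when $\ell'<5$), and the leftover potential plus the endgame/join overhead is what sums to $52/5$. Your ratio-based accounting (``each step has efficiency at least $5/7$'') is the right intuition but is not tight enough on its own: the $\ell'=3$ case of Corollary~\ref{cor:useblueedges} has local cost $5$ for gain $3$ once you include the two setup edges, worse than $5/7$, and is only rescued by carrying the spare blue edge forward. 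Without formulating the invariant and verifying it case-by-case, you cannot extract the stated bound; you have correctly diagnosed that this verification is the remaining work, but it is exactly the substance of the proof, not a routine afterthought.
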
 

\begin{proof}
Our aim is to show that Builder can construct a graph $G$ with $e(G) \le (7\ell+52) /5 $ containing a red $P_4$ or a blue $P_{\ell+1}$.%

We first obtain an initial blue path $Q$ with one endpoint incident to a red edge. We claim that either 
Builder can construct a path $xySz$ of type~A with $e(S) < \ell$, while uncovering at most $(7e(S)+4)/5$ edges, or we are done. 
We proceed as follows. Builder chooses an edge $e = uv$.
First suppose Painter colours $uv$ blue.
Then apply Lemma~\ref{lem:findA-path} to $uv$, taking $m=\ell$.
If we find a blue $P_{\ell+1}$ while uncovering $\ell-1$ additional
edges, then since we have uncovered $\ell$ edges in
total we are done.
Suppose instead we find a path $xySz$ of type~A with $e(S) < \ell$, while uncovering $e(S)$ additional
edges in the process.
Then in total Builder has uncovered $e(S)+1 < (7 e(S) +4)/5$ edges, as desired.

Suppose instead Painter colours $uv$ red.
Then Builder chooses the edge~$vx$, where $x$ is a new vertex.
If Painter colours $vx$ blue, then $uvx$ is a path of type~A constructed while uncovering $2<(7+4)/5$ edges in total.
If Painter colours $vx$ red, then Builder chooses the edges~$t u$, $uw$ and $wx$, where $t$ and $w$ are new vertices.
If Painter colours any of these edges red, then $t u v x$, $x v u w$ or $w x v u$ respectively is a red $P_{4}$ and we are done.
Otherwise, $tuwxv$ is a path of type~A (taking $S= tuwx$), constructed while uncovering $5=(7\cdot 3+4)/5$ edges in total.
Therefore, we may assume that Builder has constructed a path $xySz$ of type~A with $e(S) < \ell$ while uncovering at most $(7 e(S)+4)/5$ edges as claimed.%
\COMMENT{This startup is obviously inefficient, but it only costs us $4/5$ and making it more efficient would be extremely painful.}

Let $G_{0}$ be the graph consisting of all edges uncovered so far.
Thus $G_0$ contains a $(q_0,0,0,0)$-structure for some $1\le q_0 < \ell$, and $e(G_0) \le (7 q_0+4)/5$.
Suppose that for some $i\ge0$, Builder has already constructed a graph $G_i$ such that there exist $q_i,r_i,n_{\textnormal{blue},i},n_{\textnormal{red},i} \in \N_0$ satisfying the following properties:
\begin{enumerate}
\item [(G1)] $G_{i}\subseteq K_{\N}$ is the graph of all uncovered edges.
\item [(G2)] $G_i$ contains a $(q_i,r_i,n_{\textnormal{blue},i},n_{\textnormal{red},i})$-structure, and $q_i > 0$.
\item [(G3)] $q_i+ r_i \le \ell+4$.
\item [(G4)] $n_{\textnormal{red},i} , n_{\textnormal{blue},i} \le 1$. 
\item [(G5)] $e(G_{i})\le  (7(q_i+r_i)+4)/5 + n_{\textnormal{blue},i} + n_{\textnormal{red},i}$.
\end{enumerate}
\noindent Note that (G1)--(G5) hold for $i=0$. 
We are going to show that Builder can force a graph $G_{i+1} \supseteq G_i$ such that one of the following holds:
\begin{enumerate}
\item[(a)] $G_{i+1}$ contains a red $P_4$ or a blue $P_{\ell+1}$ and $e(G_{i+1}) \le (7\ell+52)/5$.
\item[(b)] there exist $q_{i+1},r_{i+1},n_{\textnormal{blue},i+1},n_{\textnormal{red},i+1} \in \N_0$ such that $q_{i+1}+ r_{i+1} > q_i+ r_i$ and $G_{i+1}$, $q_{i+1},r_{i+1},n_{\textnormal{blue},i+1}$ and $n_{\textnormal{red},i+1}$ together satisfy (G1)--(G5).
\end{enumerate}
If (a) holds, we are done. If (b) holds, then Builder can repeat the algorithm to obtain $G_{i+2}$.
We then simply repeat the process until it terminates, which must happen by (G3) (since $q_{i+1}+r_{i+1} > q_i+r_i$ whenever these quantities are defined).
It therefore remains only to prove that forcing such a graph is possible.

Let $m = \ell -q_i-r_i - 1$.
We split into cases depending on the values of $q_i,r_i,n_{\textnormal{blue},i}$ and $n_{\textnormal{red},i}$.

\medskip\noindent \textbf{Case 1: } $q_i + r_i \ge \ell - 1$.

In this case, we may simply join our two blue paths together to achieve (a).
Apply Corollary~\ref{cor:joinpaths} to $G_i$.
Builder obtains a graph $G_{i+1} \supseteq G_i$ with
\begin{equation*}
e(G_{i+1}) = e(G_i) +2 \stackrel{\textnormal{(G5)}}{\le} \frac{7(q_i+r_i) +4}5 +  n_{\textnormal{blue},i} + n_{\textnormal{red},i}+2
	\stackrel{\textnormal{(G3)},\textnormal{(G4)}}{\le} \frac{7\ell+52}{5}.
\end{equation*}
Moreover, $G'$ contains a red~$P_4$ or a blue $P_{\ell+1}$, so we have achieved (a).

\medskip\noindent \textbf{Case 2: } $\ell - 9 \le q_i + r_i \le \ell -2$, so that $1 \le m \le 8$.

In this case, it is more efficient to naively extend our paths to the right combined length and join them than it is to apply our normal extension methods and potentially end up with paths longer than we need.
Builder will force a red $P_4$ or a blue $P_{\ell+1}$ as follows.
Apply Corollary~\ref{cor:joinpaths} to~$G_i$ to obtain a graph $G' \supseteq G_i$ with $e(G') = e(G_i)+2$.
Note that $G'$ contains a red $P_4$ or a $(q_i+ r_i +1,0,n_{\textnormal{blue},i},n_{\textnormal{red},i})$-structure.
By repeating the process at most $m$ additional times, Builder obtains a graph $G'' \supseteq G' \supseteq G_i$, where%
\begin{align*}
e(G'') & \,\,\le\,\, e(G)+ 2m+2 \stackrel{\textnormal{(G5)}}{\le} \frac{7(q_i+r_i)+4}{5} + n_{\textnormal{blue},i} + n_{\textnormal{red},i} + 2m+2  \\
& \stackrel{\textnormal{(G4)}}{\le} \frac{7(\ell - m - 1)+4}{5} + 2 + 2m+2 = \frac{7\ell}{5}+\frac{3m+17}{5} \le \frac{7\ell + 41}{5},
\end{align*}
such that $G''$ contains a red $P_4$ or a $(q_i+ r_i +m+1,0,n_{\textnormal{blue},i},n_{\textnormal{red},i})$-structure (which contains a blue $P_{\ell+1}$).
Thus we have achieved (a).

\medskip\noindent \textbf{Case 3: } $q_i + r_i \le \ell -10$, so that $m \ge 9$.

In this case, we will extend our paths efficiently using Corollaries~\ref{cor:useblueedges} and~\ref{cor:userededges}.
By choosing at most $3- n_{\textnormal{blue},i} - n_{\textnormal{red},i}$ additional independent edges (on new vertices), Builder obtains a graph $G_i' \supseteq G_i$ containing a $(q_i,r_i,n'_{\textnormal{blue}},n'_{\textnormal{red}})$-structure such that $n_{\textnormal{blue}}'+n_{\textnormal{red}}'\le 3$, either $n_{\textnormal{blue}}' = 2$ or $n_{\textnormal{red}}' = 2$, and
\begin{align} 
e(G_i') \stackrel{\textnormal{(G5)}}{\le} \frac{7(q_i+r_i)+4}5 + n'_{\textnormal{blue}} + n'_{\textnormal{red}}. \label{eqn:G'i}
\end{align}
We split into subcases depending on the values of $n'_{\textnormal{blue}}$ and $n'_{\textnormal{red}}$.

\medskip\noindent \textbf{Case 3a:} $n'_{\textnormal{blue}} = 2$ and $n'_{\textnormal{red}} \le 1$.

We apply Corollary~\ref{cor:useblueedges} to $G'_i$, obtaining a graph $G' \supseteq G'_i$. 
First suppose Corollary~\ref{cor:useblueedges}(i) holds, so that
there exists some $3 \le \ell' \le m+3$ such that $G'$ contains a $(q_i+\ell',r_i,n''_{\textnormal{blue}},n'_{\textnormal{red}})$-structure and
$e(G') = e(G'_i) + \ell'$.
Set $G_{i+1} = G'$, $q_{i+1} = q_i+ \ell'$, $r_{i+1} = r_i$ and $n_{\textnormal{red},i+1} = n'_{\textnormal{red}}$. Set $n_{\textnormal{blue},i+1} = 0$ if $\ell' \ge 5$ and $n_{\textnormal{blue},i+1}=1$ otherwise.
Clearly $q_{i+1}+r_{i+1} > q_i + r_i$, and (G1) and (G4) are satisfied. 
Recall from Corollary~\ref{cor:useblueedges}(i) that if $\ell' < 5 \le m$ then we may take $n''_{\textnormal{blue}} = 1$, so (G2) is satisfied.
We have $q_{i+1}+r_{i+1} \le q_i + m+3 + r_i = \ell +2$, so (G3) is satisfied. 
If $3\le \ell' \le 4$, we have
\begin{align}
e(G') & = e(G'_i) + \ell'  \overset{\eqref{eqn:G'i}}{\le} \frac{7(q_i+r_i)+4}5 + 2 + n'_{\textnormal{red}} + \ell'\nonumber \\
& = \frac{7(q_i+r_i+\ell')+4}{5} - \frac{2\ell'}{5} + 2  + n'_{\textnormal{red}} \nonumber
 \le \frac{7(q_{i+1} + r_{i+1})+4}5 + 1 + n'_{\textnormal{red}} \nonumber \\
& = \frac{7(q_{i+1}+r_{i+1})+4}{5} + n_{\textnormal{blue},i+1} + n_{\textnormal{red},i+1}.
\nonumber
%\label{eq:3abound1}
\end{align}
So (G5) is satisfied and we have therefore achieved (b).
A similar argument holds for the case when $\ell' \ge 5$.
%
%
%If instead $\ell' \ge 5$, then by a calculation similar to the above, we have
%\begin{align}
%e(G') & \overset{\eqref{eqn:G'i}}{\le} \frac{7(q_i+r_i)+4}5 + 2 + n'_{\textnormal{red}} + \ell' \le \frac{7(q_{i+1} + r_{i+1})+4}5 + n'_{\textnormal{red}} \nonumber\\
%& \,\,=\,\, \frac{7(q_{i+1}+r_{i+1})+4}{5} + n_{\textnormal{blue},i+1} + n_{\textnormal{red},i+1}\label{eq:3abound2}.
%\end{align}
%Thus, by \eqref{eq:3abound1} and \eqref{eq:3abound2}, (G5) is satisfied. We have therefore achieved (b).

Suppose instead that Corollary~\ref{cor:useblueedges}(ii) holds, so that $G'$ contains a red $P_4$ and $e(G') \le e(G'_i) + m + 3$.
Then we have
\begin{align*}
e(G')& \overset{\eqref{eqn:G'i}}{\le} \frac{7(q_i+r_i)+4}5 + 2 + n_{\textnormal{red}}' + m +3
	\le\, \frac{2(q_i+r_i) +4}5 + \ell + 5  \le \frac{ 7 \ell+9}5,
\end{align*}
where the final inequality follows since $q_i+r_i \le \ell - 10$.
We have therefore achieved~(a).

\medskip\noindent \textbf{Case 3b:} $n'_{\textnormal{red}} = 2$ and $n'_{\textnormal{blue}} \le 1$.

We apply Corollary~\ref{cor:userededges} to $G'_i$, obtaining a graph $G' \supseteq G'_i$.
Suppose Corollary~\ref{cor:userededges}(i) or (ii) holds.
In either case, it follows that there exist $q'$ and $r'$ such that $G'$ contains a $(q',r',n_{\textnormal{blue}}',0)$-structure and
\begin{align*}
1 \le q' +r' - (q_i + r_i) \le m + 5.
\end{align*}
Write $\ell' = q'+r' - (q_i + r_i)$.
Set $G_{i+1} = G'$, $q_{i+1} =q'$, $r_{i+1} =r'$, $n_{\textnormal{blue},i+1}=n'_{\textnormal{blue}}$ and $n_{\textnormal{red},i+1} = 0$.
Clearly (G1)--(G4) are satisfied, and $q_{i+1} + r_{i+1} > q_i + r_i$.
Moreover, we have
\begin{align*}
e(G_{i+1}) & \le e(G'_i) + \frac{7 \ell'}{5}-2 \overset{\eqref{eqn:G'i}}{\le} \frac{7(q_i+r_i + \ell' ) + 4 }5 + n'_{\textnormal{blue}} \\
&  = \frac{7(q_{i+1}+r_{i+1}) + 4 }5 + n_{\textnormal{blue},i+1} + n_{\textnormal{red},i+1},
\end{align*}
so (G5) is satisfied. We have therefore achieved (b).

Now suppose Corollary~\ref{cor:userededges}(iii) holds, so that $G'$ contains a $(q_i,r_i+m,n_{\textnormal{blue}}',0)$-structure and $e(G') \le e(G'_i) +7m/5+6$.
We apply Corollary~\ref{cor:joinpaths} to~$G'$, obtaining a graph $G''$
such that 
\begin{align*}
	e(G'') & \,\, = \,\, e(G')+2 \le e(G'_i) + \frac{7m}{5} + 8\\
	 & \overset{\eqref{eqn:G'i}}{\le} \frac{7(q_i+r_i+m) + 4}{5} + n'_{\textnormal{blue}} + 10 \le \frac{7 \ell + 52}{5}.
\end{align*}
Moreover, $G''$ contains a red $P_4$ or an $(\ell,0,n_{\textnormal{blue}}',0)$-structure (which contains a blue $P_{\ell+1}$). We have therefore achieved (a).

Finally suppose Corollary~\ref{cor:userededges}(iv) holds, so that $G'$ contains a red $P_4$ and $e(G') \le e(G_i') + 7m/5 + 6$.
Then we have
\begin{align*}
	e(G') \le e(G'_i) + \frac{7m}{5}+6 \overset{\eqref{eqn:G'i}}{\le}  \frac{7(q_i+r_i+m) + 4}{5} + n'_{\textnormal{blue}} + 8 \le \frac{7 \ell + 42}5.
\end{align*}
We have therefore achieved (a).
This completes the proof of the theorem.
\end{proof}

% Finally, we present a general upper bound on $\tilde{r}( C_{k},P_{ \ell+1 })$.

% \begin{prop}
% For all $k ,\ell \in \mathbb{N}$ with $k \ge 3$, we have $\tilde{r}( C_{k},P_{ \ell+1 }) \le  (3k+2) \ell / 2-2 $.
% \end{prop}

% \begin{proof}
% Builder first spends at most $3(  (k-1)\ell+ \ell)/2 - 2 = 3 k \ell / 2 - 2$ rounds forcing Painter to
% construct a red $P_{(k-1)\ell+1}$ or a blue $P_{\ell+1}$.
% (This is possible by Corollary~\ref{cor:3-upper-bound}.)
% We may assume that Painter constructs a red $P_{(k-1)\ell+1} = v_1 \dots v_{(k-1)\ell+1}$ (or else we are done).

% Let $i_j = (j-1)(k-1)+1$ for all $j \in [\ell+1]$.
% Then Builder chooses edges $v_{i_j} v_{ i_{j+1} }$ for all $j \in [\ell]$.
% If Painter colours $v_{i_j} v_{ i_{j+1} }$ red for some $j$, then $v_{i_j} v_{i_j+1} \dots v_{ i_{j+1} } v_{i_j}$ is a red~$C_{k}$.
% If Painter colours all the edges blue, then $v_{i_1} v_{i_2} \dots v_{\ell+1}$ is a blue~$P_{\ell+1}$.
% In total, Builder has chosen at most $3 k \ell / 2 - 2 + \ell = (3k+2) \ell / 2 -2$ edges and the proposition follows.
% \end{proof}

\end{document}